\newcommand{\ZZ}{{\mathbb Z}}
\newcommand{\Z}{{\mathbb Z}}
\newcommand{\CC}{{\mathbb C}}
\newcommand{\RR}{{\mathbb R}}
\newcommand{\R}{{\mathbb R}}
\newcommand{\TT}{{\mathbb T}}
\newcommand{\T}{{\mathbb T}}
\newcommand{\A}{{\mathcal A}}
\newcommand{\C}{{\mathcal C}}
\renewcommand{\P}{{\mathcal P}}
\renewcommand{\L}{{\mathcal L}}
\newcommand{\Log}{\text{Log}}
\newcommand{\td}{{\text{''}}}
\newcommand{\tg}{{\text{``}}}
\newtheorem{thm}{Theorem}[section]
\newtheorem{defi}[thm]{Definition}
\newtheorem{prop}[thm]{Proposition}
\newtheorem{proposition}[thm]{Proposition}
\theoremstyle{definition}
\theoremstyle{definition}
\newenvironment{exo}{\it{}{}}
\newcommand{\comment}[1]{}
\begin{document}
\title{A bit of tropical geometry}
\author{Erwan Brugall\'e}
\author{Kristin Shaw}
\address{Erwan Brugallé, 
École polytechnique,
Centre Mathématiques Laurent Schwatrz, 91 128 Palaiseau Cedex, France}
\email{erwan.brugalle@math.cnrs.fr}
\address{Kristin Shaw, Departement of Mathematics, University of
  Toronto, 40 St. George St., Toronto, Ontario, CANADA
M5S 2E4.}
\email{shawkm@math.toronto.edu }
\date{\today}
\thanks{To a great extent  this text is an updated translation  from
  French of  \cite{Br11}. 
In addition to the original text, we
included in Section \ref{sec:further}  an account
 of some developments of tropical geometry that occured
 since the publication of \cite{Br11} in 2009. \\
We are very grateful to Oleg Viro for his encouragement and extremely
helpful comments. We also thank Ralph Morisson  and the anonymous referees for their useful suggestions on a
preliminary version of this text.}

\maketitle

What kinds of strange spaces with mysterious properties hide behind
the enigmatic name of \textit{tropical geometry}? In the tropics, just
as in other geometries, it is difficult to find a simpler example than
that of a line.
So
let us start from here.

A tropical line consists of three usual half lines in the directions
$(-1, 0), (0, -1)$ and $(1, 1)$ emanating from any point in the plane
(see Figure \ref{intro}a). Why call this strange object a line,
 in the
tropical sense or any other? If we look more closely we find that
tropical lines share some of the familiar geometric properties of
``usual" or ``classical" lines in the plane. 
For instance,  most pairs of 
 tropical lines 
intersect in a single point\footnote{It might happen that two distinct
   tropical lines intersect in infinitely many points, as a ray of a
   line might be contained in the parallel ray of the other. We will
   see in Section \ref{sec:trop int} that there is a more
   sophisticated notion of 
\textit{stable intersection} of two tropical curves, and
that \textit{any two} tropical lines have a unique stable intersection
point.} (see Figure \ref{intro}b). 
Also for most choices of pairs of points in the plane there is  a unique tropical line passing through the two points\footnote{The situation here is
  similar to the case
  of intersection of tropical lines: there exists the
  notion of \textit{stable tropical line} passing through two points
  in the plane, and  \textit{any
    two} such points define a unique stable tropical line.} (see Figure \ref{intro}c).

\begin{figure}[h]
\begin{center}
\begin{tabular}{ccccc}
\includegraphics[width=3cm, angle=0]{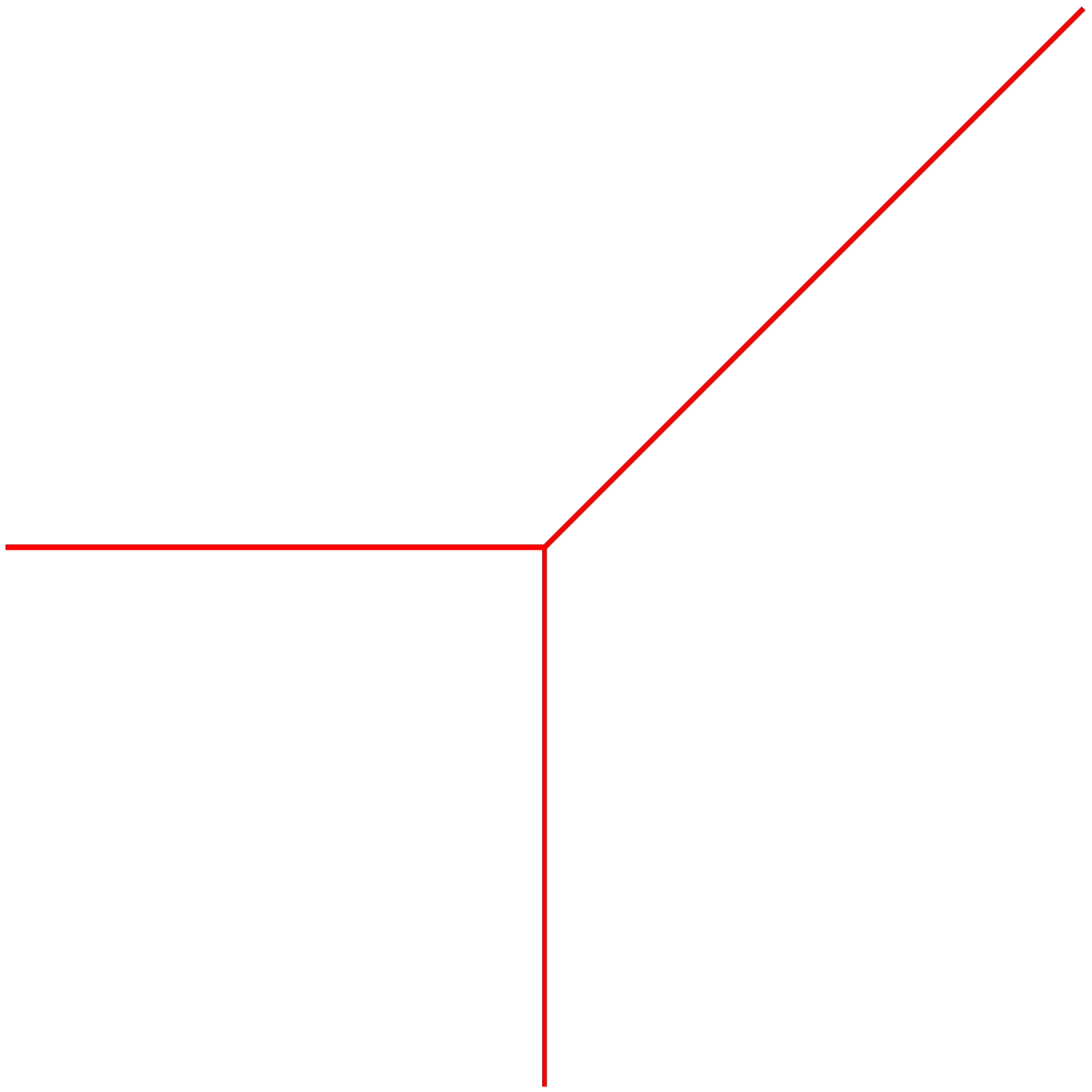}&\hspace{3ex} &
\includegraphics[width=3cm, angle=0]{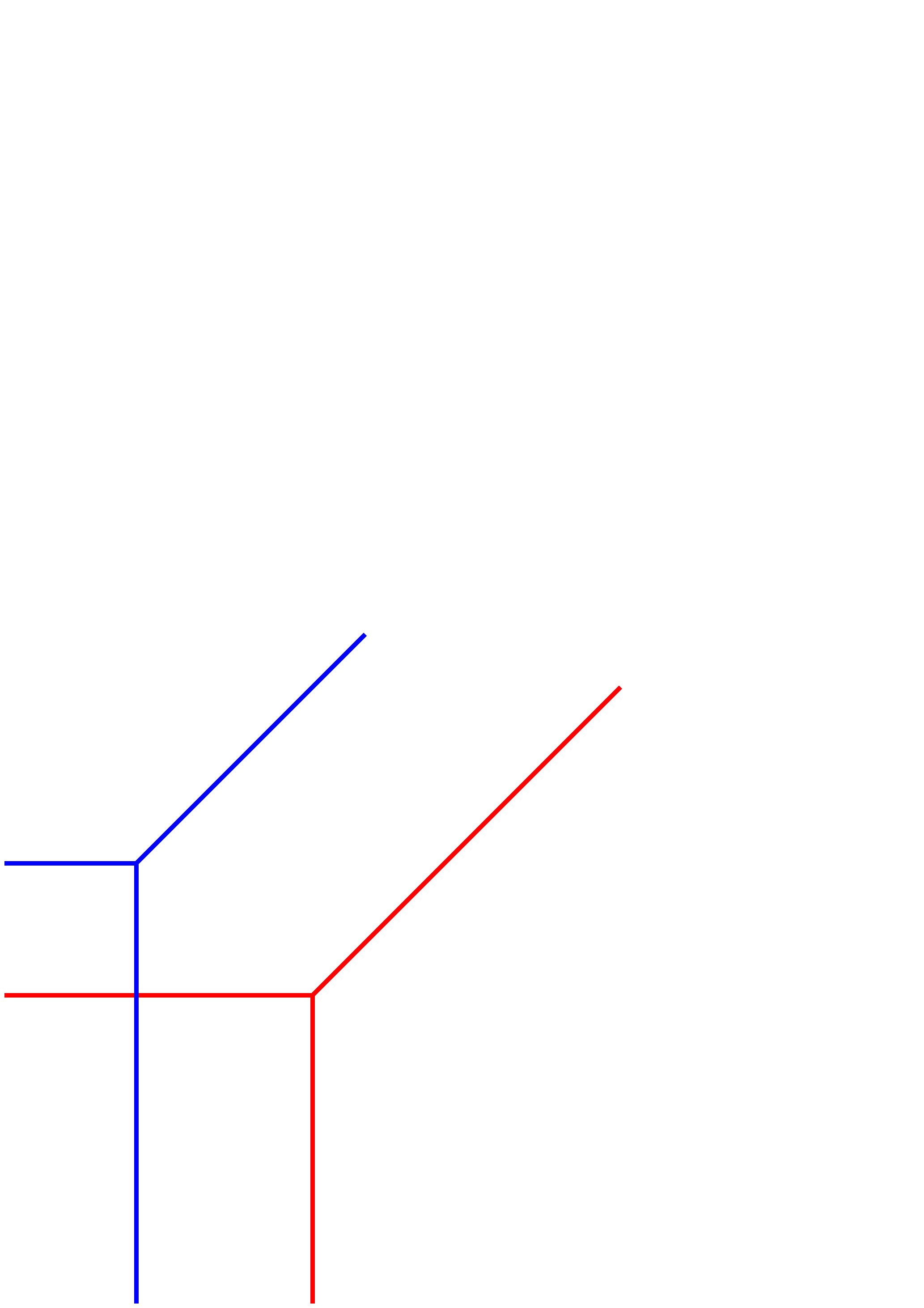}&\hspace{3ex} &
\includegraphics[width=3cm, angle=0]{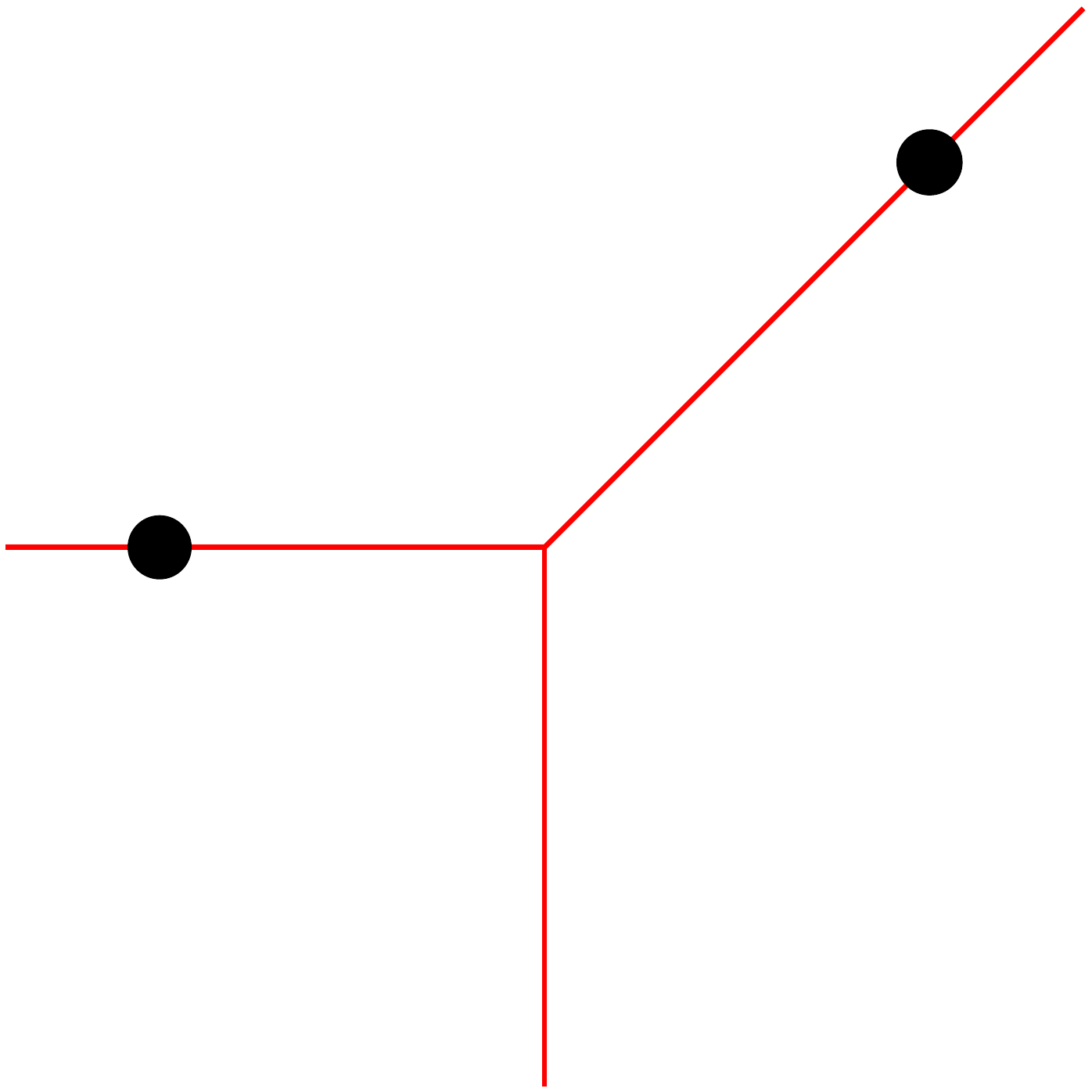}
\\ \\a)  && b)
 && c)
\end{tabular}
\end{center}
\caption{The tropical line}
\label{intro}
\end{figure}

What is even more important, although not at all visible from the
picture, is that classical and tropical lines are both given by an
equation of the form $ax +by + c = 0$. In the realm of standard
algebra, where addition is addition and multiplication is
multiplication,
 we can determine without too much difficulty the
classical line given by such an equation. 
In the tropical
world, addition is replaced by the maximum and multiplication is
replaced  by addition. Just by  doing this, all of our objects
drastically change form! 
In fact, 
even ``being equal to 0"
takes on a very different meaning... 

Classical  and tropical geometries are developed following the same principles but from two different methods of calculation. They are simply the
 geometric 
faces of two different algebras.

\vspace{1ex}

Tropical geometry is not a 
game for bored mathematicians looking for something to do.  In fact, the classical world can be \textit{degenerated} to the tropical world in such a  way so that the tropical objects conserve many properties of the original classical ones.
Because of this,
 a tropical statement has a strong chance of having a similar
 classical analogue. The advantage is that  tropical objects are
 piecewise linear and 
 thus much simpler to study than their classical
 counterparts!

We could therefore summarise the approach of tropical geometry as follows:

\begin{center}
\textit{Study simple objects to
provide theorems 
concerning  complicated objects.}
\end{center}

\vspace{1ex}

The first part of this text focuses on tropical algebra,  tropical
curves and some of their properties. We then explain why  classical
and tropical geometries are related by showing 
how the
classical world can be degenerated to the tropical one. We
illustrate this principle by considering a method known as
\textit{patchworking}, which is used to construct real algebraic
curves via objects called \textit{amoebas}. 
Finally, in the last section we 
  go beyond these  topics to showcase some 
  further developments in the field. 
These examples are a little more challenging 
 than the rest of the text, as they illustrate
 more recent  directions of research.
We conclude 
by delivering some 
bibliographical references.  

\vspace{1ex}
Before diving into the subject, we should explain the use of the word
``tropical". It is not due to the exotic forms of the objects under
consideration, nor to the appearance of the  previously mentioned
``amoebas". Before the  term  ``tropical algebra", the more
cut-and-dry name of ``max-plus algebra" was used. Then, in honour of
the work of their Brazilian colleague, Imre Simon, the computer
science researchers  at the University of Paris 7 decided to trade the
name of ``max-plus" for ``tropical".  Leaving the last word to
Wikipedia\footnote{March 15, 2009.}, the origin of the word
``tropical" \textit{simply reflects the French view on Brazil}.

\section{Tropical algebra}\label{sec:algebra}

\subsection{Tropical operations}

Tropical algebra is the set of real numbers where addition is replaced
by taking the maximum, and multiplication is replaced by the usual sum.
In other words, we define two new
operations on $\R$, called \textit{tropical addition} and \textit{multiplication} and
denoted $``+"$ and $``\times"$ respectively, in the following way:
$$``x+y" = \max (x, y ) \qquad ``x \times y" = x+y$$

In this entire text, quotation marks will be placed around an expression to indicate that the operations should be regarded as tropical. Just as in classical algebra we often abbreviate $``x \times y"$ to $``xy"$.  To familiarise ourselves with these two new strange operations, let's do some simple calculations:
$$\tg 1+1\td=1, \ \ \tg 1+2\td=2, \ \ 
\tg 1+2+3\td=3, \ \  \tg 1\times 2\td=3,  \ \   \tg 1\times
(2+(-1))\td=3, $$
$$   \tg
1\times (-2)\td=-1, \ \ \tg (5+3)^2 \td= 10.
$$

These two tropical operations have many properties in common with the
usual addition and multiplication. For example, they are both
commutative and tropical multiplication $``\times"$ is distributive
with respect to tropical  addition $``+"$ (i.e. $``(x+y)z "= ``xz +
yz"$). There are however two major differences. First of all, tropical
addition does not have an identity element in $\R$
 (i.e. there is no element $x$ such that $\max \{y, x\} = y$ for all $y \in \R$). 
Nevertheless, we can naturally extend our two tropical  operations to $-\infty$ by 
$$ \forall x \in \T, \quad ``x + (-\infty)" = \max (x, -\infty) = x \quad \text{and} \quad ``x \times (-\infty)" = x + (-\infty) = -\infty, $$
where $\T  = \R \cup \{ -\infty \}$ are the \textit{tropical
numbers}. Therefore, after adding $-\infty$ to $\R$, tropical addition
now has an identity element.
 On the other hand, a major difference remains between tropical and classical addition: an element of $\R$ does not have an additive ``inverse". Said in another way, tropical subtraction does not exist. Neither can we solve this problem by adding more elements to $\T$ to try to cook up additive inverses. In fact, $``+"$ is said to be \textit{idempotent}, meaning that $``x+x" = x$ for all $x$ in $\T$. 
Our only choice is to  get used to the lack of tropical  additive inverses! 

Despite this last point, the tropical numbers $\T$ equipped with the operations $``+"$ and $``\times"$ satisfy all of the other properties of a field. For example, $0$ is the identity element for tropical multiplication, and every element $x$ of $\T$ different from $-\infty$ has a multiplicative inverse $``\frac{1}{x}" = -x$. 
Then $\T$ satisfies almost all of the axioms of a field, so by convention  we say that it is a   \textit{semi-field}. 

One must take care when writing tropical formulas!  As, $``2x" \neq ``x+x"$ but $ ``2x" = x+2$.
Similarly, $``1x" \neq x$ but $``1x" = x+1$, 
and  
 once 
 again $``0x" = x$ and $``(-1)x" = x-1$.

\subsection{Tropical polynomials}
After having defined tropical addition and multiplication, we naturally come to consider functions of the form 
$P(x) = ``\sum_{i = 0}^d a_ix^i"$ with the $a_i$'s in $\T$, in other words,  tropical polynomials\footnote{In fact, we consider tropical polynomial functions instead of tropical polynomials. Note that two different tropical polynomials may still define the same function.}. 
By rewriting $P(x)$ in classical notation, we obtain
$P(x)=\max_{i=1}^d(a_i + ix ) $. 
Let's look at some examples of tropical polynomials:
$$\tg x\td = x, \ \ \  \tg 1+ x\td = \max(1,x), \ \ \  
\tg 1+ x +3x^2\td =\max(1,x,2x+3), $$
$$\tg 1+ x +3x^2+(-2)x^3\td =\max(1,x,2x+3, 3x-2).$$

Now let's find the roots of a tropical polynomial. Of course, we must
first ask, what is a tropical root? In doing this  we encounter a
recurring problem in tropical mathematics: a classical notion may have
many equivalent definitions, yet when we pass to the tropical world
these could turn out to be different,  as we will see. 
Each equivalent definition of the same classical object potentially
produces as many different tropical objects.

The most basic definition of classical roots of a polynomial $P(x)$ is an element $x_0$ such that $P(x_0) = 0$. If we attempt to replicate this definition in tropical algebra, we must look for elements $x_0$ in $\T$ such that $P(x_0) = -\infty$. Yet, if $a_0$ is the constant term of the polynomial $P(x)$ then $P(x) \geq a_0$ for all $x$ in $\T$. Therefore, if $a_0 \neq -\infty$, the polynomial $P(x)$ 
would not have any
roots... This definition
is surely not adequate.

We may take an alternative, yet equivalent, classical definition: an element,  $x_0 \in \T$ is a classical root of a polynomial $P(x)$ if there exists a polynomial $Q(x)$ such that $P(x) = (x-x_0)Q(x)$. We will soon see that this definition is the correct one for tropical algebra. To understand it, let's take a geometric point of view of the problem. 
 A tropical polynomial is a piecewise linear function and each piece has an integer slope (see Figure 2). 
 What is also apparent from the Figure 2 is that a tropical polynomial is convex, or ``concave up". This is because it is the maximum of a collection of linear functions.

We call \textit{tropical roots} of the polynomial $P(x)$ all points $x_0$ of $\T$ for which the graph of $P(x)$ has a corner at $x_0$. Moreover, the difference in the slopes of the two pieces adjacent to a corner gives the \textit{order} of  the corresponding root. Thus, the polynomial $``0+x"$ has a simple root at $x_0 = 0$, the polynomial $``0 + x + (-1)x^2 \td$ has simple roots $0$ and $1$ and the polynomial $``0+x^2 \td$ has a double root at $0$. 

\begin{figure}[h]
\begin{center}
\begin{tabular}{ccccc}
\includegraphics[width=4cm, angle=0]{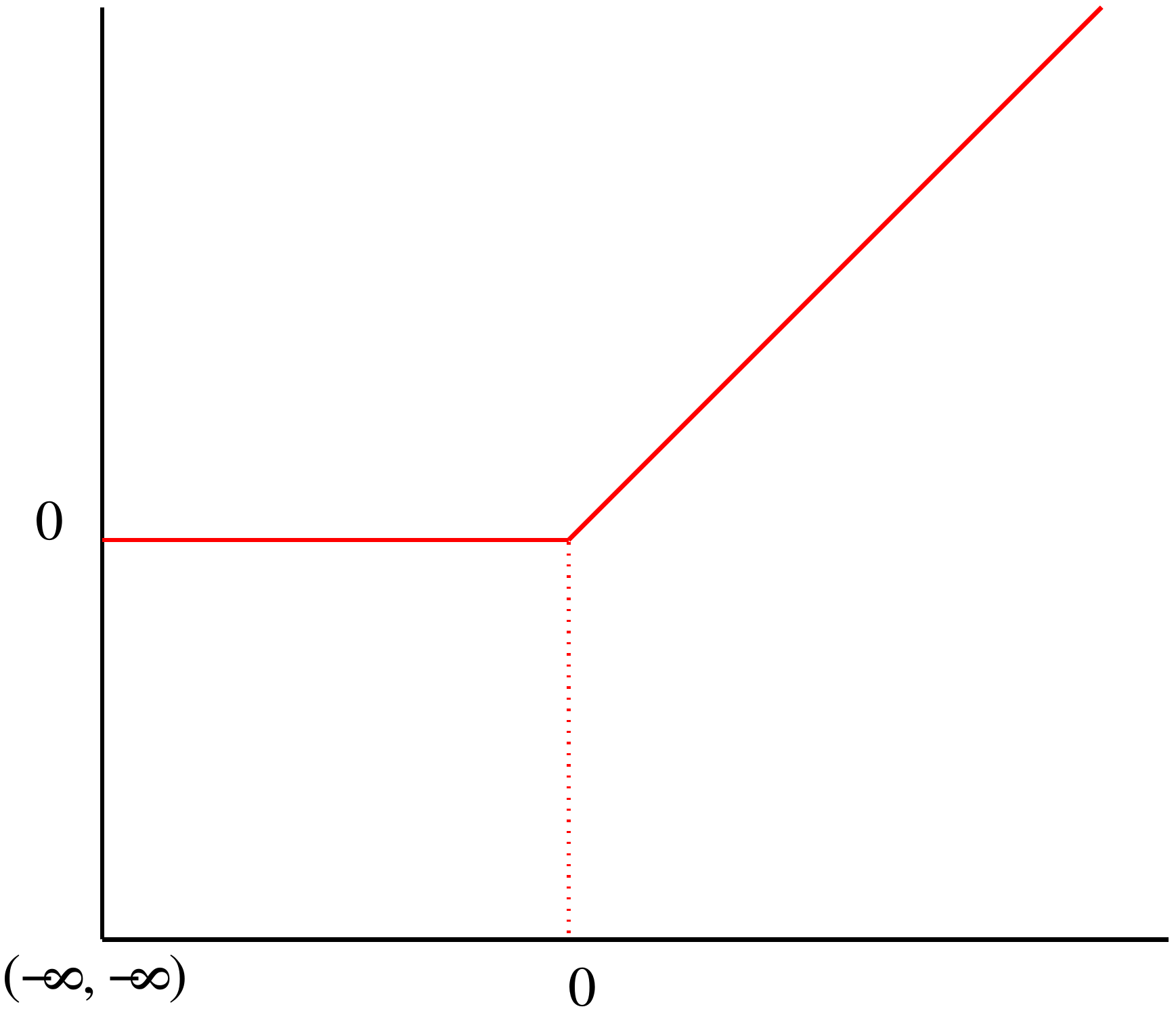}&\hspace{3ex} &
\includegraphics[width=4cm, angle=0]{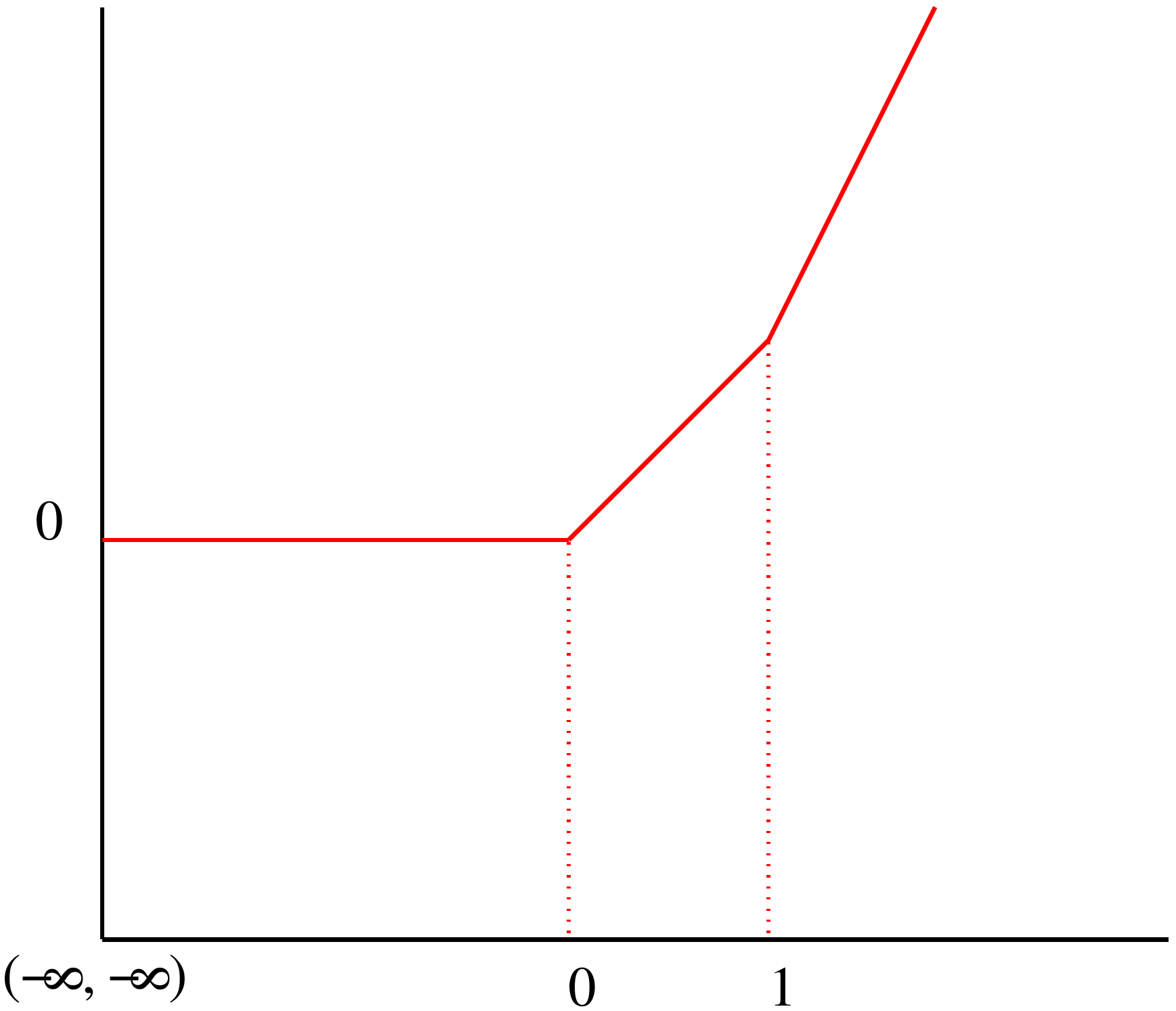}&\hspace{3ex} &
\includegraphics[width=4cm, angle=0]{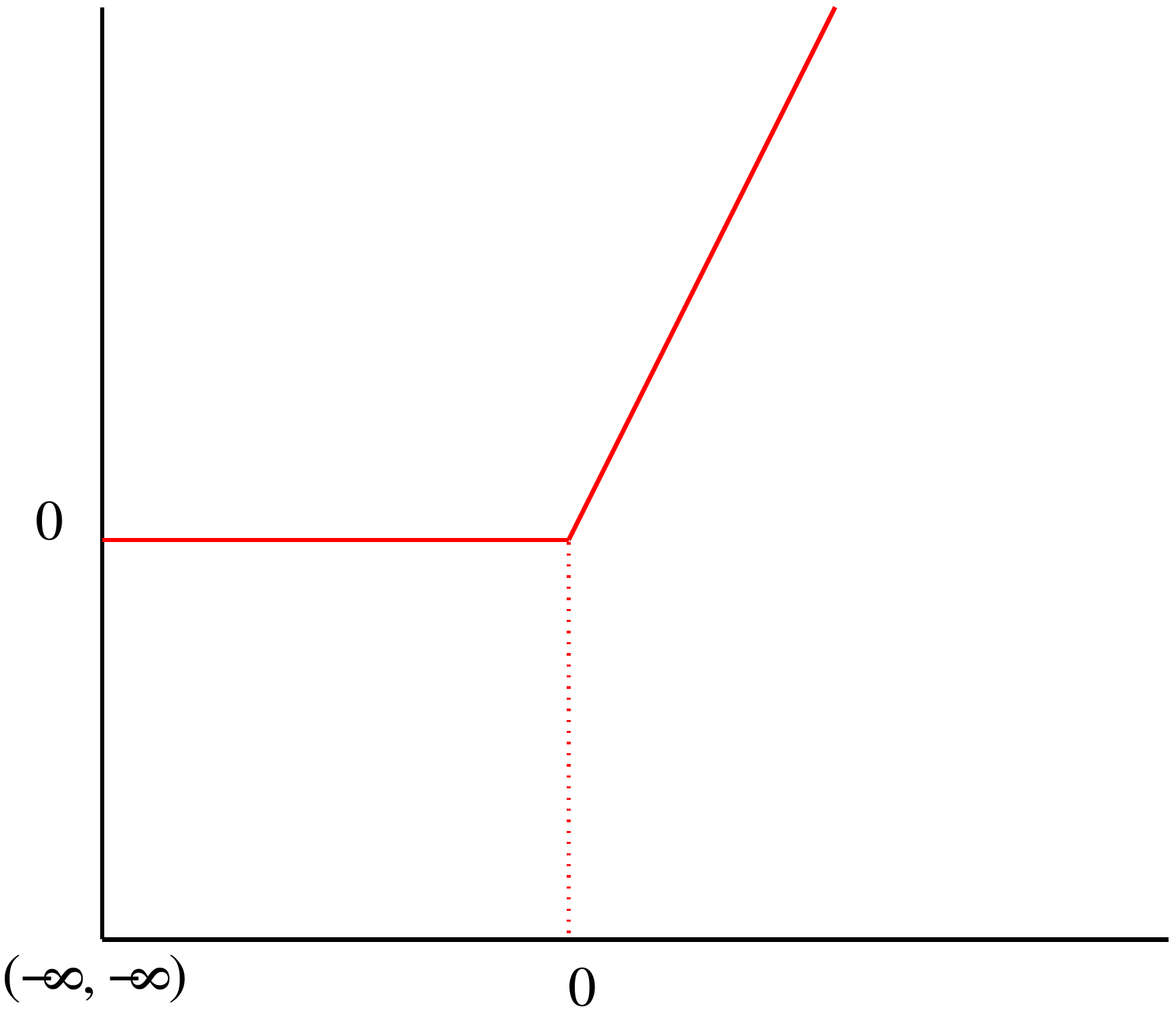}
\\ \\a) $P(x)=\tg 0+x \td$ && b) $P(x)=\tg 0+x + (-1)x^2\td$
 && b) $P(x)=\tg 0+x^2\td$
\end{tabular}
\end{center}
\caption{The graphs of some tropical polynomials}
\label{graphes}
\end{figure}

The roots of a tropical polynomial 
 $P(x)=\tg \sum_{i=0}^d a_ix^i\td=\max_{i=1}^d(a_i + ix )$
 are therefore exactly the tropical numbers $x_0$ for which there exists 
 a pair $i \neq j$ such that 
$P(x_0)=a_i+ix_0=a_j+jx_0$. 
We say that the maximum of $P(x)$ is obtained (at least) twice at $x_0$. 
In this case, the order of the root at $x_0$ is the maximum of $|i-j|$ for all 
possible pairs $i$, $j$ which realise this maximum at $x_0$. 
For example,  the maximum of $P(x)=\tg 0+x+x^2\td$ is obtained 3 times at $x_0 = 0$ and the order of 
this root is 2. Equivalently, $x_0$ is a tropical root of order
at least $k$ of $P(x)$ if
there exists  a
tropical polynomial $Q(x)$ such that $P(x) =\tg (x+x_0)^kQ(x) \td$. 
Note that the factor $x-x_0$ in classical algebra gets transformed to the 
factor  $\tg x+x_0\td$, since the root of the polynomial
$\tg x+x_0\td$ is
$x_0$ and not  $-x_0$.

This definition of a tropical root seems to be much more satisfactory than the first one.
In fact, using this definition we have the following proposition.

\begin{proposition}\label{prop:T alg closed}
The tropical semi-field is algebraically closed. In other words, every tropical polynomial of 
degree $d$ has exactly $d$ roots when counted with multiplicities. 
\end{proposition}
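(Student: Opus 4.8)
The plan is to read off everything from the graph of $P$, exactly as in Figure~\ref{graphes}. Recall that $P(x)=\max_{i=0}^{d}(a_i+ix)$ is a maximum of affine functions, hence a convex piecewise linear function whose pieces have integer slopes; its roots are the corners of the graph, and the order of a root is the jump in slope across the corresponding corner. Counting roots with multiplicity therefore amounts to adding up all the slope jumps, and I would reduce the statement to computing the total slope variation of $P$ over $\T$.

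First I would determine the two extreme slopes. Because $P$ is a maximum of the lines $a_i+ix$ (taken over the indices $i$ with $a_i\neq-\infty$), convexity makes the slope nondecreasing in $x$. As $x\to+\infty$ the line of largest slope dominates, so the rightmost piece has slope $d$; here we use $a_d\neq-\infty$, which is the meaning of ``degree $d$''. As $x\to-\infty$ the line of smallest slope dominates, so the leftmost piece has slope $m:=\min\{i:a_i\neq-\infty\}$. Writing the finite corners as $x_1<\dots<x_r$ with orders $k_1,\dots,k_r$, each $k_s$ is the difference of the slopes of the two adjacent pieces, so the sum telescopes:
\[
\sum_{s=1}^{r}k_s=(\text{slope at }+\infty)-(\text{slope at }-\infty)=d-m.
\]

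It then remains to treat the point $-\infty$. If $m=0$, that is $a_0\neq-\infty$, the finite corners already account for all $d$ roots. If $m>0$, then $a_0=\dots=a_{m-1}=-\infty$ and one can factor $P(x)=\tg x^{m}Q(x)\td$ with $Q(-\infty)\neq-\infty$; since the unique root of $\tg x\td$ is $-\infty$, this exhibits $-\infty$ as a root of order exactly $m$. Adding it to the finite total gives $(d-m)+m=d$ roots in $\T$, as claimed.

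The delicate points are precisely these endpoints. The main obstacle is the bookkeeping at $-\infty$: one must be willing to treat $-\infty$ as an honest point of $\T$ where $P$ takes the value $a_0$, and to check that the slope-jump notion of order agrees with the factorization notion $P=\tg(x+x_0)^{k}Q\td$ used in the statement. Concretely, at a corner the maximum may be attained by several consecutive terms, as in $\tg 0+x+x^2\td$, and one should verify that the largest value of $|i-j|$ among the attaining indices equals the slope jump there; intermediate coefficients $a_i=-\infty$ are harmless, as they simply never enter the maximum. Finally, I would note that the same slope analysis gives the stronger factorization $P(x)=\tg a_d\,x^{m}\prod_{s=1}^{r}(x+x_s)^{k_s}\td$ as tropical polynomial functions, which is what genuinely justifies calling $\T$ \emph{algebraically closed}: two convex piecewise linear functions with the same corners, the same slope jumps, and the same slope at infinity coincide, and both sides here share all three.
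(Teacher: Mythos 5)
The paper does not actually prove Proposition \ref{prop:T alg closed}; it is left as Exercise 5 of Section \ref{sec:algebra}, so there is no official argument to compare against. Your proof is correct and is the intended one: since the paper \emph{defines} the order of a root as the slope jump at a corner, the telescoping sum giving $d-m$ for the finite roots is immediate, and you rightly isolate the only delicate point, namely the root at $-\infty$. Note that at $-\infty$ the ``maximum attained twice, order $=\max|i-j|$'' characterisation genuinely breaks down (for $P(x)=\tg x^{2}+x^{3}\td$ it would assign order $1$ to $-\infty$ rather than the needed $2$), so your choice to fall back on the factorisation definition $P(x)=\tg x^{m}Q(x)\td$ there is not just bookkeeping but essential; it is exactly what makes the count come out to $d$.
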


For example, one may check that we have the following factorisations\footnote{Once again the 
equalities hold in terms of polynomial functions not on the level of the 
polynomials.
For example, $\tg 0+x^2\td$ and $ \tg (0+x)^2\td$ are equal as polynomial functions but not as polynomials.}:
$$\tg 0+x + (-1)x^2\td = \tg (-1)(x+0)(x + 1)\td \ \ \ and \ \ \ \tg 0 +
x^2\td = \tg (x+0)^2\td $$

\subsection{Exercises}
\begin{exo}
\begin{enumerate}
\item Why does the idempotent property of tropical addition prevent the
existence of inverses for this operation?
\item Draw the graphs of the tropical polynomials  $P(x)=\tg x^3+2x^2+3x
  +(-1)\td$ and $Q(x)=\tg x^3+(-2)x^2+2x+(-1)\td$, and determine their tropical 
  roots. 
\item Let $a  \in \R$ and  $b, c \in \T$. Determine the roots of the 
polynomials $ \tg ax+b\td$ and $ \tg ax^2+bx+c\td$. 
\item Prove that $x_0$ is a tropical root of order
at least $k$ of $P(x)$ if and only if
there exists  a
tropical polynomial $Q(x)$ such that $P(x) =\tg (x+x_0)^kQ(x) \td$.
\item Prove Proposition \ref{prop:T alg closed}.
\end{enumerate}
\end{exo}

\section{Tropical curves}\label{sec:curves}

\subsection{Definition}
Carrying on 
boldly,
 we can increase the number of variables in our polynomials. A tropical polynomial in two variables is written 
$P(x,y)=\tg \sum_{i,j}a_{i,j}x^iy^j\td $, or better yet
$P(x,y)=\max_{i,j}(a_{i,j}+ix+jy)$ in classical notation. In this way
our tropical polynomial is again a convex piecewise
 linear
 function,
and the \textit{tropical curve} $C$ defined by $P(x, y)$ is the corner locus of
this function. Said in another way, a tropical curve $C$ consists of
all points $(x_0, y_0)$ in $\T^2$ for which the maximum of $P(x, y)$ is obtained at
least twice at $(x_0, y_0)$.

We should point out that 
up until Section \ref{sec:further}
we will focus 
on tropical curves contained in $\R^2$ and not in $\T^2$. This 
does not affect at all the generality of what will be discussed here, however it renders the definitions, the statements, and our drawings simpler and easier to understand.

Let us look
at the tropical line defined by the polynomial
$P(x,y)=\tg \frac{1}{2}+2x +(-5)y \td$. 
We must find the points 
$(x_0,y_0)$ in $\R^2$ that satisfy one of the following three 
systems of equations:
$$2+x_0=\frac{1}{2}\ge -5+y_0, \ \ \ \ \ \ \ \ \ -5+y_0=\frac{1}{2}\ge 2+x_0,
\ \ \ \ \ \ \ \ \ 2+x_0=-5+y_0\ge \frac{1}{2}.$$ 

We see that our tropical line is made up of three standard half-lines
$$\{(-\frac{3}{2},y) \ | \ y\le \frac{11}{2} \},  \  \{(x,\frac{11}{2}) \ |
\ x\le -\frac{3}{2} 
\}, \text{ and } \{(x,x+7) \ | 
\ x\ge  -\frac{3}{2} \}$$ (see  Figure \ref{droite}a).

\begin{figure}[h]
\begin{center}
\begin{tabular}{ccc}
\includegraphics[width=4cm, angle=0]{Figures/Droite.pdf}&
\includegraphics[width=4cm, angle=0]{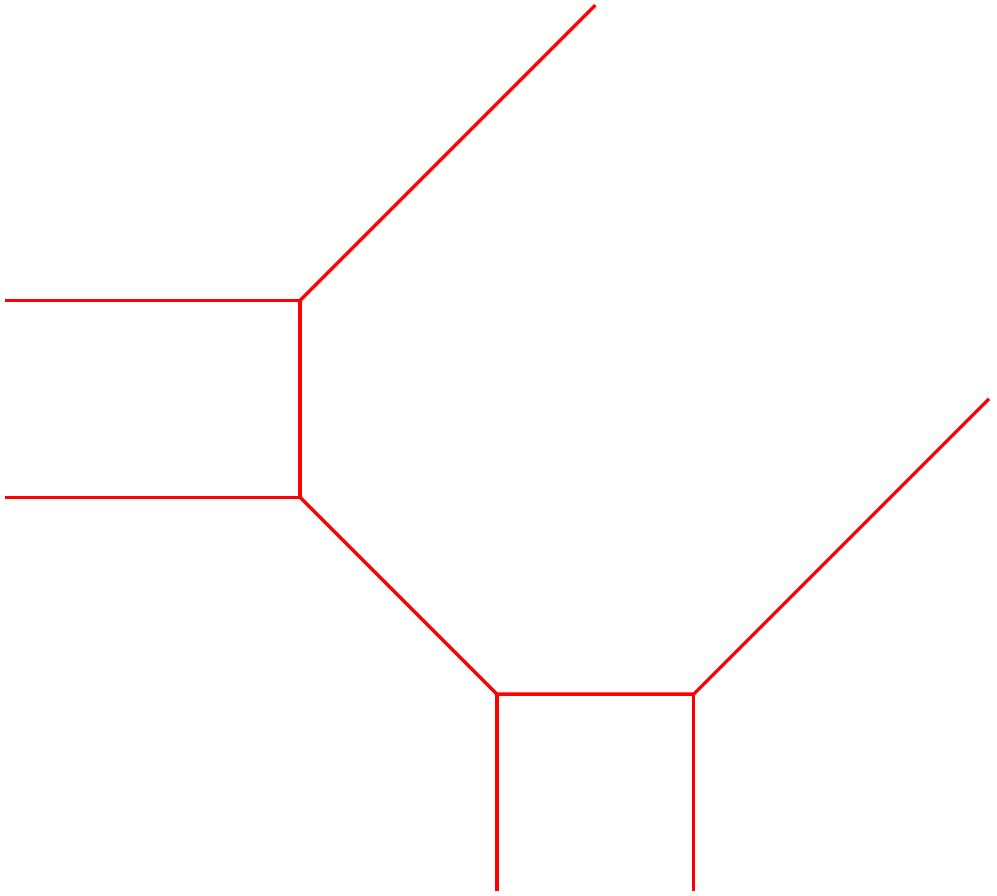}&
\includegraphics[width=4cm, angle=0]{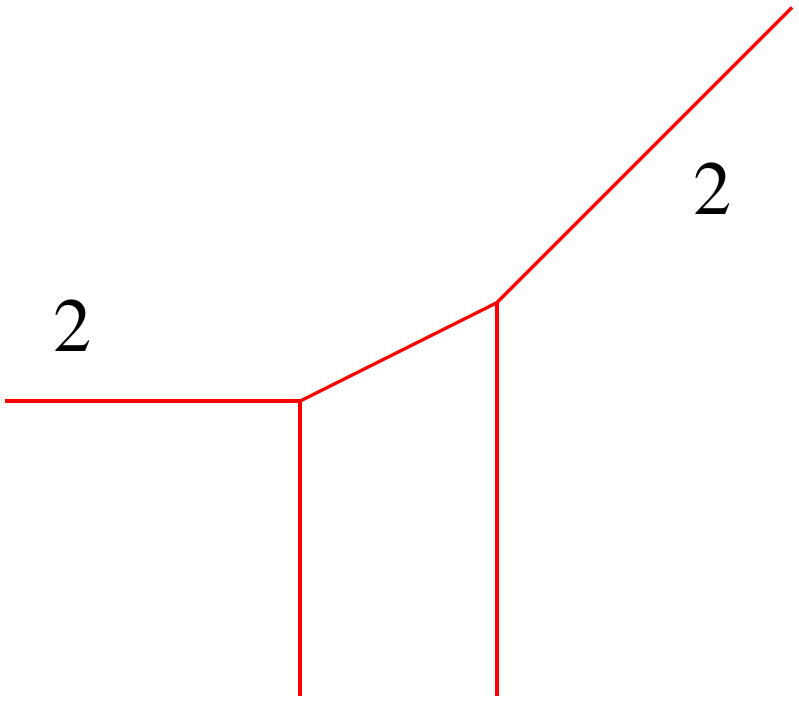}
\\ \\a) $\tg \frac{1}{2}+2x +(-5)y\td$ & b)  $\tg 3+ 2x + 2y + 3xy+y^2+x^2\td$ 
 & c) $\tg 0+ x + y+y^2+(-1)x^2\td$

\end{tabular}
\end{center}
\caption{Some tropical curves}
\label{droite}
\end{figure}

We are still missing one bit of information to properly define a tropical curve. 
The corner locus of a tropical polynomial in two variables consists of 
line segments and half-lines, which we call \textit{edges}. These intersect at points
which we will call \textit{vertices}. Just as in the case of polynomials in one variable,
for each edge of a tropical curve, we must take into account the difference in the slope of $P(x, y)$ 
on the two sides of the edge. 
Doing this, we arrive at the following formal definition of a tropical curve.

\begin{defi}
Let $P(x,y)=\tg \sum_{i,j} a_{i,j}x^iy^j\td $ be a tropical polynomial.  The tropical curve 
 $C$ defined by  $P(x,y)$  is the set of points $(x_0,y_0)$ of $\R^2$ 
 such that there exists pairs  $(i,j)\ne (k,l)$ satisfying
$P(x_0,y_0)=a_{i,j}+ix_0+jy_0=a_{k,l}+kx_0+ly_0$.

We define the weight $w_e$ of an edge $e$ of $C$ to be the maximum of the 
greatest common divisor (gcd) of the numbers $|i-k|$ and 
$|j-l|$ for all pairs $(i,j)$ and $(k,l)$ which correspond to this edge. That is to say,
$$w_e=\max_{M_e} \left(\gcd(|i-k|, |j-l|) \right), $$
where 
$$M_e=\{(i,j), (k,l)\ |\ \forall x_0\in e, P(x_0,y_0)=a_{i,j}+ix_0+jy_0=a_{k,l}+kx_0+ly_0 \}.$$
\end{defi}

In Figure \ref{droite}, the weight of an edge is only indicated
if  the weight 
is at least two. For example, in the case of the tropical 
line, all edges are of weight 1. Thus, Figure \ref{droite}a represents 
fully the tropical line. Two examples of tropical curves of degree 2 are
shown in Figure \ref{droite}b and c. The tropical conic in Figure \ref{droite}c 
has two edges of weight 2. 
Note that given an edge $e$ of a tropical curve defined by $P(x,y)$, the set $M_e$ can have any cardinality 
between $2$ and $w_e+1$: in the example of Figure \ref{droite}c, we have $M_e=\{(0,0), (0,1),(0,2) \}$ if $e$ is the 
 horizontal edge, and $M_e=\{(2,0),(0,2) \}$ if $e$ is the other edge of weight $2$.

\subsection{Dual subdivisions}
To recap, a  tropical polynomial is given by the maximum of a finite number of 
linear
functions corresponding to monomials of $P(x, y)$. Moreover, the points of the plane $\R^2$ for which  at least two of these monomials realise the maximum are exactly the points of the tropical curve $C$ defined by $P(x, y)$. Let us refine this a bit and consider at each point $(x_0, y_0)$ of $C$, \textit{all} of the monomials of $P(x, y)$ that realise the maximum at $(x_0, y_0)$. 

Let us first go back to the tropical line $C$ defined by the equation 
$P(x,y)=\tg 
\frac{1}{2}+2x+(-5)y\td$ (see  
Figure \ref{droite}a). 
The point
 $(-\frac{3}{2},\frac{11}{2})$ is the vertex of the line $C$. This is where the three monomials 
 $\frac{1}{2}=\frac{1}{2}x^0y^0$, 
$2x=2x^1y^0$ and
$(-5)y=(-5)x^0y^1$ take the same value.
The exponents of those monomials, that it to say the points $(0, 0)$,
$(1, 0)$ and $(0, 1)$, define a triangle $\Delta_1$
(see Figure \ref{subd}a). Along the horizontal edge of $C$ the value of the polynomial $P(x, y)$ is given by the
monomials $0$ and $y$, 
in other words, the monomials 
with exponents $(0, 0)$ and $(0, 1)$.
Therefore, these two exponents define the vertical edge of the
triangle $\Delta_1$.  In the same way, the monomials giving the value
of $P(x, y)$ along the vertical edge of $C$ 
have exponents
$(0, 0)$ and $(1,
0)$,
which
define the horizontal edge of $\Delta_1$. 
Finally, along the edge of $C$ that has slope 1,  $P(x, y)$ is given
by the monomials 
with exponents
$(1, 0)$ and $(0,1)$,
which 
define the edge of $\Delta_1$ that has slope $-1$.

What can we learn from this digression? In looking at the monomials which give the value of the tropical polynomial $P(x, y)$ at a point of the tropical line $C$, we notice that the vertex of $C$ corresponds to the triangle $\Delta_1$ and that each edge $e$ of $C$ corresponds to an edge $\delta_e$ of $\Delta_1$, whose direction is perpendicular to that of $e$. 
\vspace{1ex}

Let us illustrate this with the  tropical conic defined by the polynomial $P(x, y) = \tg
3+2x+2y+3xy+x^2+y^2 \td$, and drawn in Figure \ref{droite}b. This curve has as its vertices 
four points  $(-1,1)$, $(-1,2)$, $(1,-1)$ and $(2,-1)$. At each of these vertices
 $(x_0,y_0)$, the value of the polynomial  $P(x,y)$ 
 is given by three monomials:
$$P(-1,1)=3=y_0+2=x_0+y_0+3
\ \ \ \ \ \ \ \ \ \ \ \ \ \ \ \ P(-1,2)=y_0+2=x_0+y_0+3=2y_0 \ \  $$
$$P(1,-1)=3=x_0+2=x_0+y_0+3
\ \ \ \ \ \ \ \ \ \ \ \ \ \ \ \ P(2,-1)=x_0+2=x_0+y_0+3=2x_0. \ \  $$
Thus for each vertex of $C$, the exponents of the three 
corresponding 
monomials 
define a triangle, and these four triangles are arranged as shown in 
 Figure \ref{subd}b. 
 Moreover, just as in the case of the line, for each edge $e$ of $C$, the exponents
 of the monomials giving the value of $P(x, y)$ along the edge $e$ define an edge 
 of one (or two) of these triangles. Once again the direction of the edge $e$ is
 perpendicular to the corresponding edge of the triangle.

\vspace{1ex}

To explain this phenomenon in full generality, let 
$P(x,y)=\tg \sum_{i,j} a_{i,j}x^iy^j\td$ be any tropical polynomial. 
 The  \textit{degree} of $P(x,y)$ is the maximum of the sums
 $i+j$ for all coefficients $a_{i, j}$ different from $-\infty$. For simplicity, 
 we will assume in this text that all polynomials of degree $d$ satisfy
 $a_{0,0}\ne -\infty$, $a_{d,0}\ne  -\infty$ and $a_{0,d}\ne
-\infty$. 
Thus, all the points $(i, j)$ such that $a_{i, j} \neq -\infty$ are contained in 
the triangle with vertices $(0, 0)$, $(0, d)$ and $(d, 0)$, which we call $\Delta_d$.
Given a finite set of points $A$ in $\RR^2$, the \emph{convex hull} of $A$ is the unique convex polygon  with vertices in $A$ and containing\footnote{Equivalently, it is the smallest convex polygon containing $A$.} $A$. From what we just said, the triangle $\Delta_d$ is precisely the convex hull of the points $(i, j)$ such that $a_{i, j} \neq -\infty$.

If $v = (x_0,y_0)$ is a vertex of the curve $C$ defined by $P(x, y)$, then the convex hull 
of the points $(i, j)$ in $\Delta_d \cap \Z^2$ such that 
 $P(x_0,y_0)=a_{i,j}+ix_0+jy_0$ is another polygon 
$\Delta_v$ 
which is contained in $\Delta_d$.
Similarly, if $(x_0, y_0)$ is a point in the interior of an edge $e$ of $C$, then 
the convex hull of the points $(i, j)$ in $\Delta_d\cap\ZZ^2$ such that
$P(x_0,y_0)=a_{i,j}+ix_0+jy_0$ is a segment 
$\delta_e$ contained in  $\Delta_d$.
The fact that 
 the tropical polynomial $P(x,y)$ is a convex piecewise 
 linear function implies that 
the collection of all  $\Delta_v$ form a \emph{subdivision} of
$\Delta_d$. 
In other words, the union of all of the polygons $\Delta_v$ is 
equal to the triangle $\Delta_d$,  and two polygons $\Delta_v$ and
$\Delta_{v'}$ have either an edge in common, a vertex in common, or do not intersect at all. 
Moreover, if $e$ is an edge of $C$ adjacent to the vertex $v$, then $\delta_e$ is an edge 
of the polygon $\Delta_v$, and  
$\delta_e$ is perpendicular to $e$. 
In particular, an edge $e$ of $C$ is infinite, i.e. is adjacent to only one vertex of $C$,  if and only if $\Delta_e$ is contained in an edge of $\Delta_d$. 
This subdivision of $\Delta_d$ is called the \textit{dual subdivision} of $C$.

For example, the dual subdivisions of the tropical curves in 
Figure \ref{droite} are drawn in 
 Figure \ref{subd} (the black points represent the points of $\R^2$ with integer
coordinates; notice they are not necessarily the vertices of the dual subdivision).

 The weight of an edge may be read off directly from the dual subdivision.
 \begin{prop}\label{prop:weight edge}
  An edge $e$ of a tropical curve has weight $w$ if and only if $Card(\Delta_e\cap \ZZ ^2 ) = w+1$.
 \end{prop}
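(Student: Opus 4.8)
The plan is to reduce everything to the combinatorics of the single segment $\delta_e$ (written $\Delta_e$ in the statement), exploiting the fact that the pairs of exponents entering the definition of $w_e$ are precisely the pairs of lattice points lying on this segment.

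First I would record the key translation. By definition, $M_e$ consists of those pairs $(i,j) \neq (k,l)$ whose monomials both realise the maximum of $P$ all along $e$; equivalently, both $(i,j)$ and $(k,l)$ belong to the set $S_e$ of exponents realising the maximum at an interior point of $e$. (These two descriptions agree because each linear piece $a_{i,j}+ix+jy$ is continuous, so a monomial realising the maximum on the interior of $e$ also realises it at the endpoints.) Since $\delta_e$ is the convex hull of $S_e$ and is a segment, all points of $S_e$ lie on a single line; in particular the two endpoints $A$ and $B$ of $\delta_e$ are elements of $S_e$, being the extreme points of the hull. Let $u=(p,q)$ be the primitive integer direction vector of this line, so that $\gcd(|p|,|q|)=1$.

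Next I would compute both sides in terms of $u$. For any pair in $M_e$ the difference $(i-k,j-l)$ is a lattice vector parallel to $u$, hence an integer multiple $n\,u$, and $\gcd(|i-k|,|j-l|)=|n|\gcd(|p|,|q|)=|n|$; this is largest precisely for the extreme pair $\{A,B\}$. Writing $B-A=w\,u$, I therefore get $w_e=\max_{M_e}\gcd(|i-k|,|j-l|)=w$. On the other hand, because $u$ is primitive, the lattice points of the segment $\delta_e$ are exactly the points $A+n\,u$ for $n=0,1,\dots,w$, so $\mathrm{Card}(\delta_e\cap\ZZ^2)=w+1$. Comparing the two computations yields $\mathrm{Card}(\Delta_e\cap\ZZ^2)=w_e+1$, which is the asserted equivalence.

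I do not expect a genuine obstacle here: the statement dissolves once one notices that $M_e$ is controlled entirely by the endpoints of $\delta_e$. The only points requiring a word of care are (i) that the maximal gcd is attained at the endpoint pair rather than at some intermediate pair, which follows from primitivity since every difference vector is a scalar multiple of $u$, and (ii) the elementary fact that a segment between two lattice points contains $\gcd(|A_x-B_x|,|A_y-B_y|)+1$ lattice points. Note that I never need to know whether every lattice point of $\delta_e$ actually lies in $S_e$: the weight is read off from the endpoints alone, whereas $\mathrm{Card}(\Delta_e\cap\ZZ^2)$ counts all lattice points of the segment, and both equal $w+1$ regardless.
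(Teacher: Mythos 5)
Your proof is correct; note that the paper itself gives no proof of this proposition (it is left as Exercise 3 of Section 2.5), and your argument --- identifying the endpoints of $\delta_e$ as exponents in $M_e$, writing every difference vector as an integer multiple of the primitive direction $u$, and counting lattice points on the segment via the gcd --- is exactly the intended one. The only step I would tighten is the parenthetical justification that $M_e$ coincides with pairs from $S_e$: the point is not merely continuity but that $P$ restricted to $e$ is \emph{linear}, so a monomial $a_{i,j}+ix+jy$ that is $\le P$ on $e$ and touches it at one interior point must agree with $P$ on all of the closed edge; with that said, the argument is complete.
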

 It follows  from Proposition \ref{prop:weight edge} that the degree
 of a tropical curve may be 
 determined 
 easily only from the curve itself: it is the sum
 of weights of all infinite the edges in the direction $(-1, 0)$ (we could equally consider the
 directions $(0, -1)$ or $(1, 1)$). 
 Moreover,  up to a translation and choice of 
 lengths of its edges,  a tropical curve is determined by its dual subdivision. 

\begin{figure}[h]
\begin{center}
\begin{tabular}{ccccc}
\includegraphics[width=1cm, angle=0]{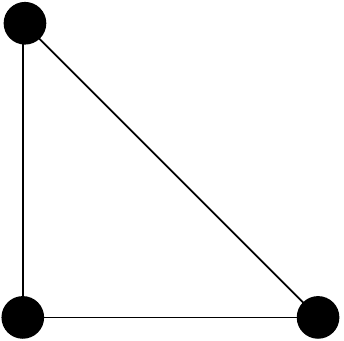}&\hspace{6ex} &
\includegraphics[width=2cm, angle=0]{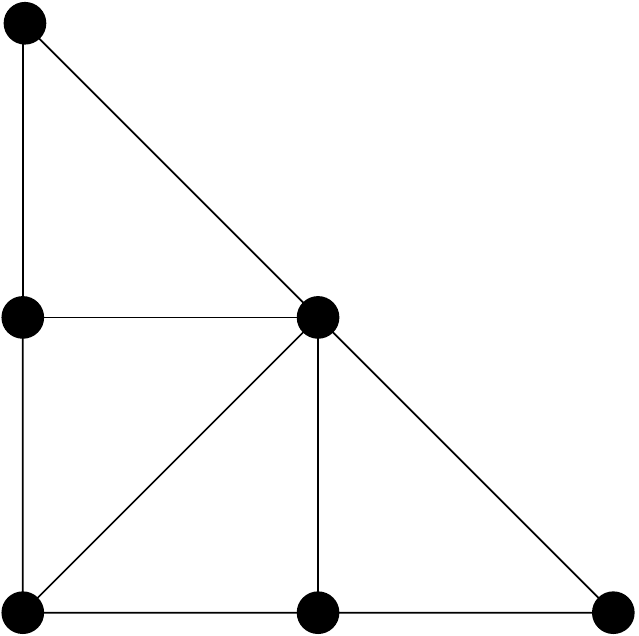}&\hspace{6ex} &
\includegraphics[width=2cm, angle=0]{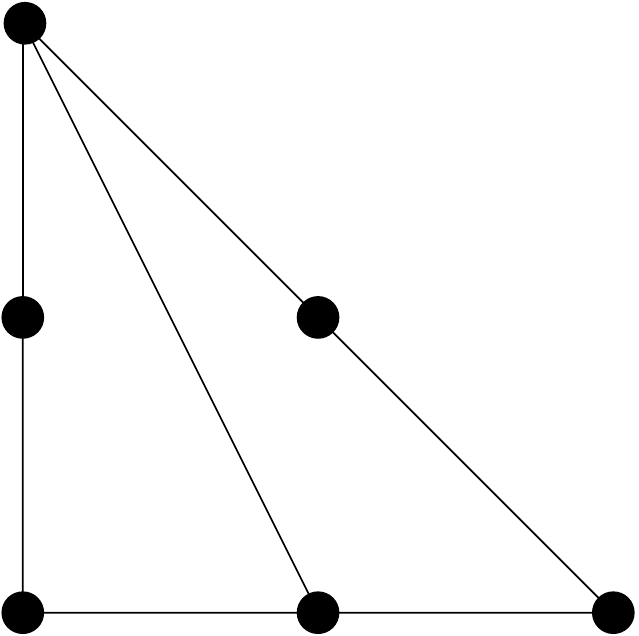} 
\\ \\a) && b) &&c)

\end{tabular}
\end{center}
\caption{Some dual subdivisions}
\label{subd}
\end{figure}

\subsection{Balanced graphs and tropical curves.}\label{sec:balancing}

The first consequence of 
the duality 
 from  the last section 
is that a certain relation, known
as the \textit{balancing condition}, is satisfied at each vertex of a tropical
curve. Suppose $v$ is a vertex of $C$ adjacent to the edges $e_1,
\dots, e_k$ with respective weights $w_1, \dots , w_k$. Recall that
every edge $e_i$ is 
contained in
 a line (in the usual sense) defined
by an equation with integer coefficients.
 Because of this there exists a unique integer vector $\vec v_i =
 (\alpha, \beta)$ in the direction of $e_i$ such that
 $\text{gcd}(\alpha, \beta) = 1$ (see Figure  \ref{equ}a).
We  orient the boundary of $\Delta_v$ in the counter-clockwise direction,
 so that each edge $\delta_{e_i}$ of $\Delta_v$ dual to $e_i$ is
 obtained from a vector $w_i \vec v_i$ by rotating by an angle of exactly $\pi /2$ (see Figure
\ref{equ}b). Then, following the previous section, the polygon $\Delta_v$ dual to $v$ yields
 immediately the vectors $w_1 \vec v_1, \dots , w_k\vec v_k$.

\begin{figure}[h]
\begin{center}
\begin{tabular}{ccc}
\includegraphics[width=4cm,
  angle=0]{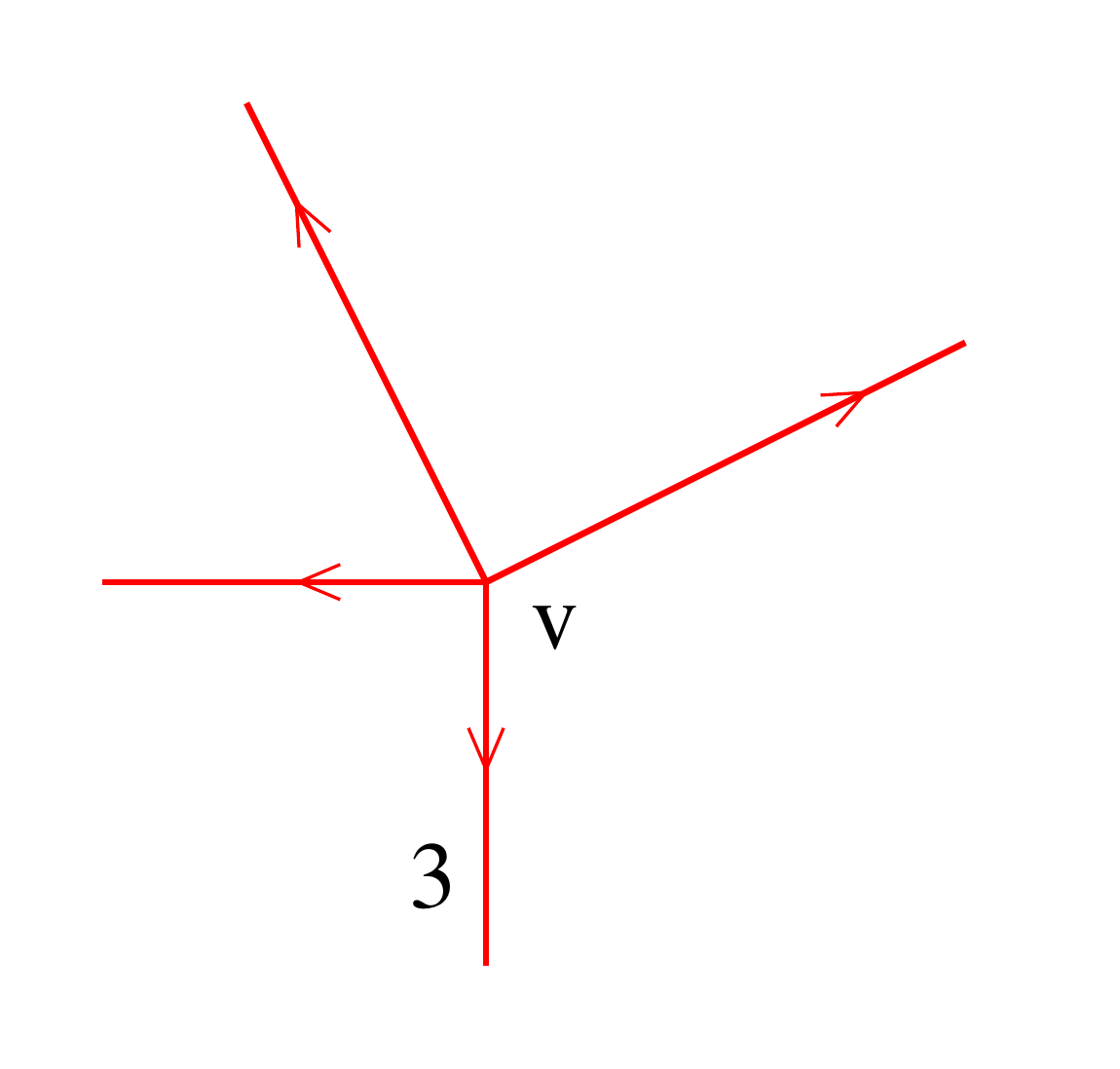}& \hspace{4ex} &
\includegraphics[width=4cm, angle=0]{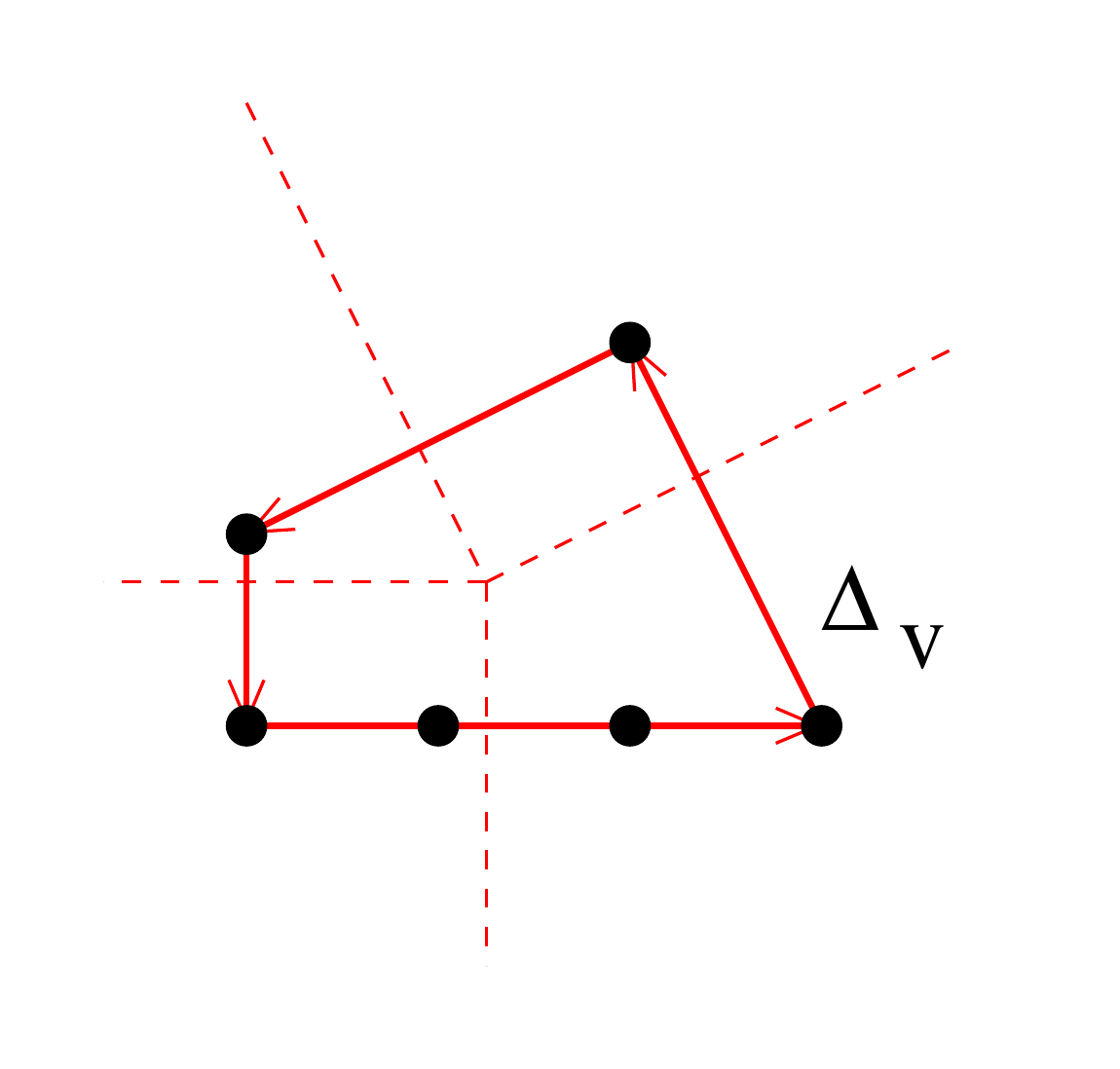}
\\ a) && b)
\end{tabular}
\end{center}
\caption{Balancing condition}
\label{equ}
\end{figure}

The fact that the polygon $\Delta_v$ is closed immediately implies the 
following balancing condition: 

$$\sum_{i=1}^k w_i\vec v_i=0.$$

A graph in $\R^2$ whose edges have rational slopes and
are equipped with positive 
integer weights is a \textit{balanced graph} if it satisfies the balancing condition
 at each one of its vertices. We have just 
 seen that every tropical curve is a balanced graph. In fact, the converse is 
 also true.

\begin{thm}[G. Mikhalkin]
Tropical curves in $\R^2$ are exactly the balanced graphs. 
\end{thm}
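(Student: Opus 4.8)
Since the previous section already establishes that every tropical curve is a balanced graph, the plan is to prove the converse: given a balanced graph $\Gamma \subset \R^2$, I will produce a tropical polynomial $P$ whose corner locus is exactly $\Gamma$. The guiding principle is to reverse the duality of the previous section. There, each region of $\R^2 \setminus C$ was dual to a lattice point of the dual subdivision, carrying the monomial of $P$ that realises the maximum on that region; each edge recorded the jump of the gradient of $P$; and each vertex reassembled into a cell $\Delta_v$. I will run this correspondence backwards, reading off the exponents and coefficients of $P$ from the regions of $\R^2 \setminus \Gamma$.

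First I would assign to every connected region $R$ of $\R^2\setminus\Gamma$ an integer slope $g_R \in \Z^2$, meant to be the gradient of $P$ on $R$. Fixing one region and choosing its slope arbitrarily, I extend to the others by the rule that crossing an edge $e$ changes the slope by $w_e \vec v_e$ rotated by $\pi/2$, where $\vec v_e$ is the primitive integer direction of $e$ and $w_e$ its weight. The point is that this is independent of the chosen path: since the jumps along any loop avoiding the vertices reduce to a sum of contributions from the vertices enclosed, it suffices to check that the slope returns to its value after encircling a single vertex $v$. Encircling $v$ once, the total jump is $\sum_i w_i \vec v_i$ rotated by $\pi/2$, which vanishes precisely by the balancing condition $\sum_i w_i \vec v_i = 0$. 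This is exactly the statement that the vectors $w_i\vec v_i$ rotated by $\pi/2$ close up into the polygon $\Delta_v$, so the slopes $g_R$ around $v$ are the vertices of $\Delta_v$.

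Next I would fix the constant terms. On $R$ the desired affine function is $a_R(x,y) = g_R\cdot(x,y) + c_R$, and continuity across an edge $e$ shared by $R$ and $R'$ forces $a_R$ and $a_{R'}$ to agree on the line containing $e$; since $g_R - g_{R'}$ is normal to $e$, this is a single linear condition determining $c_R - c_{R'}$. Integrating these differences is again consistent around each vertex, because the lines containing the edges adjacent to $v$ all pass through $v$, so evaluating at $v$ makes the differences telescope to zero. Hence the $c_R$ are well defined up to one global additive constant, and I may form $P(x,y) = \max_R\big(g_R\cdot(x,y) + c_R\big)$, the maximum ranging over the finitely many slopes $g_R$. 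Being a maximum of affine functions, $P$ is automatically convex, and its monomials have exponents $g_R$ and coefficients $c_R$.

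It remains to check that the corner locus of $P$ is exactly $\Gamma$, and this is the step I expect to be the main obstacle. By construction the two affine functions attached to the regions on either side of an edge agree along that edge; the content is to show that this common value is in fact the \emph{global} maximum there, so that $\Gamma$ lies in the corner locus, and that on the interior of each region the intended affine function strictly dominates all others, so that no extra corners appear and each intended monomial genuinely occurs. Equivalently, one must show that the subdivision of the convex hull of the exponents $g_R$ assembled from the cells $\Delta_v$ is the regular subdivision induced by the heights $c_R$ — that is, that the heights coming from the actual planar positions of the vertices of $\Gamma$ put the exponents in convex position. I would establish this locally around each vertex, where convexity of the cell $\Delta_v$ together with the perpendicularity between $\delta_{e_i}$ and $e_i$ guarantees that near $v$ exactly the expected monomials compete, and then propagate the conclusion globally using the convexity of $P$ already in hand. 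Once this is verified, $P$ has $\Gamma$ as its corner locus with the correct edge weights, completing the proof.
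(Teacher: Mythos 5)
The paper does not actually prove this theorem: it establishes only the easy direction (every tropical curve satisfies the balancing condition, via the duality with the subdivision of $\Delta_d$) and states the converse without proof, deferring to the references. So your proposal cannot be compared to an in-text argument; judged on its own, it is the standard proof of the nontrivial direction and its architecture is sound: integrate the prescribed gradient jumps $w_e\vec v_e^{\,\perp}$ across the edges of $\Gamma$ (well defined because the monodromy around each vertex is exactly the balancing condition), then integrate the constants (well defined because all edges at a vertex $v$ pass through $v$), and finally show the assembled piecewise-affine function is the maximum of its pieces.

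Two remarks on the step you flag as ``the main obstacle.'' First, it is less of an obstacle than you fear, but only if you nail down the sign you leave ambiguous in the propagation rule: you must decree that when crossing $e$ in the direction of a normal vector $\vec n$, the gradient jumps by $w_e$ times the primitive integer vector \emph{parallel to $\vec n$} (i.e.\ $\vec v_e^{\,\perp}$ with the sign chosen to agree with the crossing direction). With that convention, positivity of the weights $w_e$ makes the assembled function convex across every edge; a piecewise-affine function on $\R^2$ that is convex across each edge is convex along every line, hence globally convex, hence equal to $\max_R a_R$ (each $a_R$ is a supporting affine function). This yields in one stroke that the corner locus is exactly $\Gamma$, with no extra corners and with the correct weights by Proposition~\ref{prop:weight edge}, since $g_{R'}-g_R=w_e\vec v_e^{\,\perp}$ spans $w_e+1$ lattice points. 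Your appeal to ``convexity of the cell $\Delta_v$'' is slightly circular as written: that convexity is itself a consequence of the cyclic ordering of the edge directions at $v$ and of the sign convention, not an independent input. Second, minor bookkeeping: the slopes $g_R$ produced this way lie in $\Z^2$ but need not be nonnegative; translate all exponents by a fixed integer vector (equivalently multiply $P$ by a monomial) to land in the paper's convention, which does not change the corner locus.
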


Thus, this theorem affirms that there exist tropical polynomials of degree 3 whose 
tropical curves are the weighted graphs in Figure \ref{equil}. 
We have also drawn for each curve, the associated dual subdivision of $\Delta_3$.

\begin{figure}[h]
\begin{center}
\begin{tabular}{ccccc}
\includegraphics[width=4cm, angle=0]{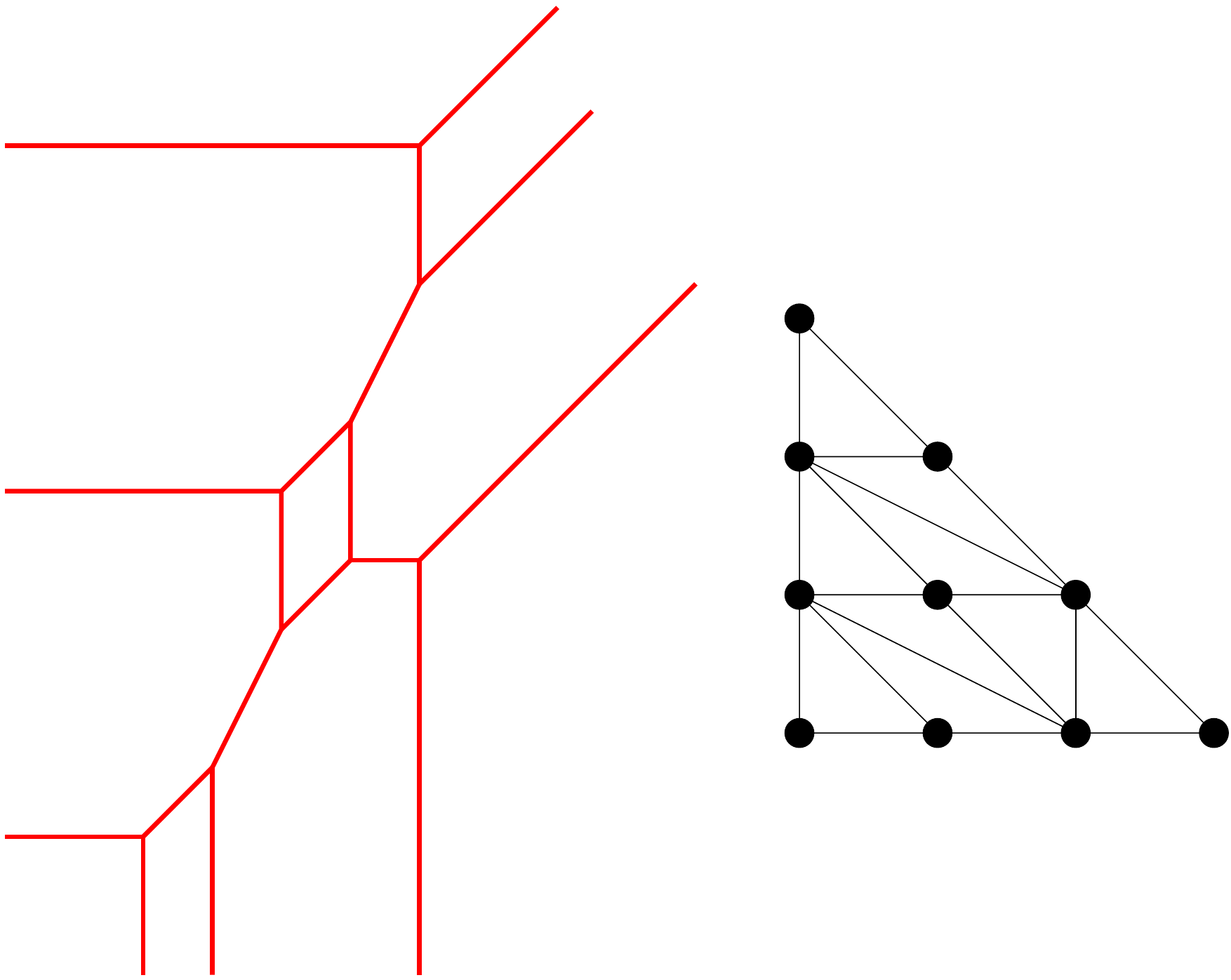}&\hspace{3ex} &
\includegraphics[width=4cm, angle=0]{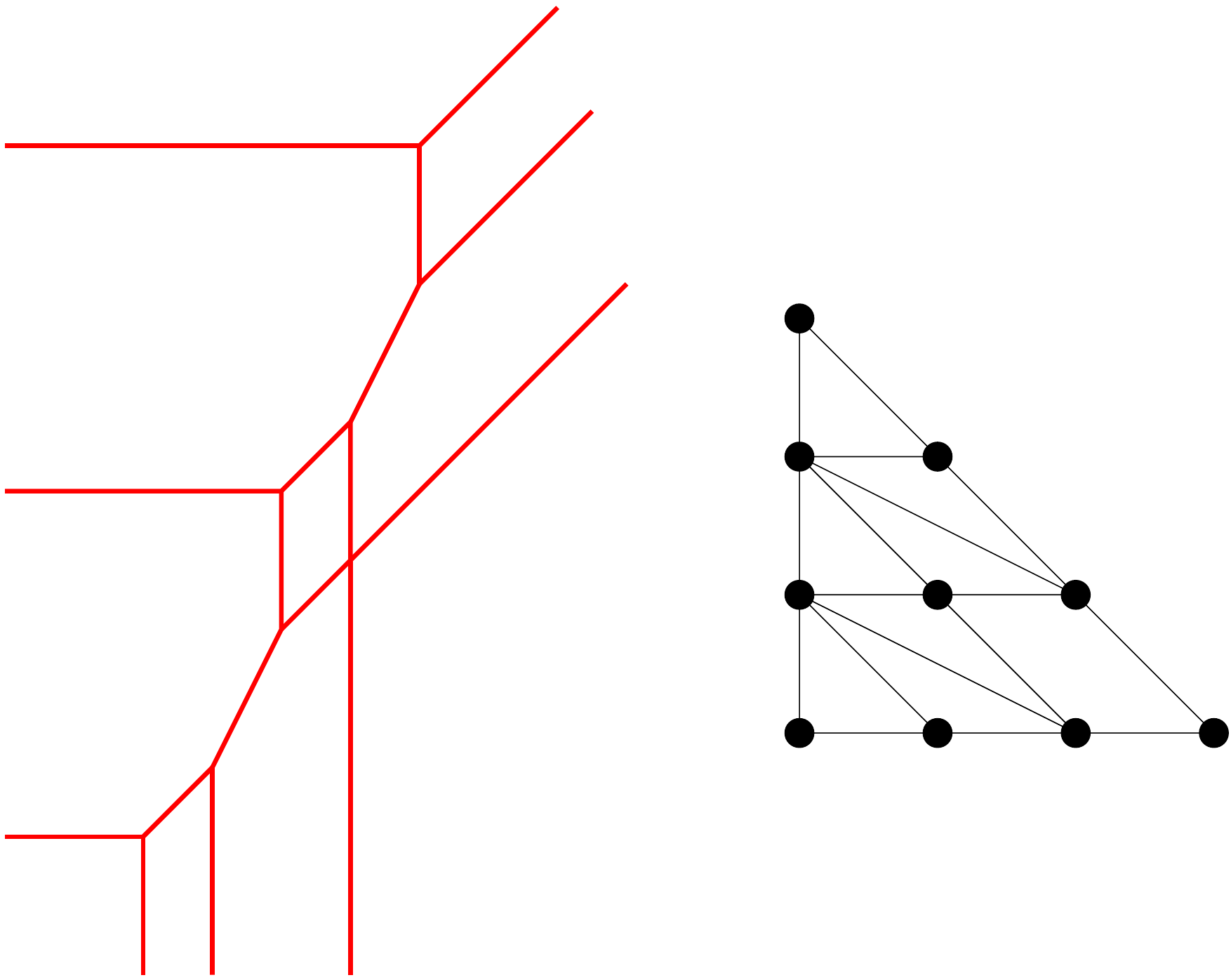}&\hspace{3ex} &
\includegraphics[width=4cm, angle=0]{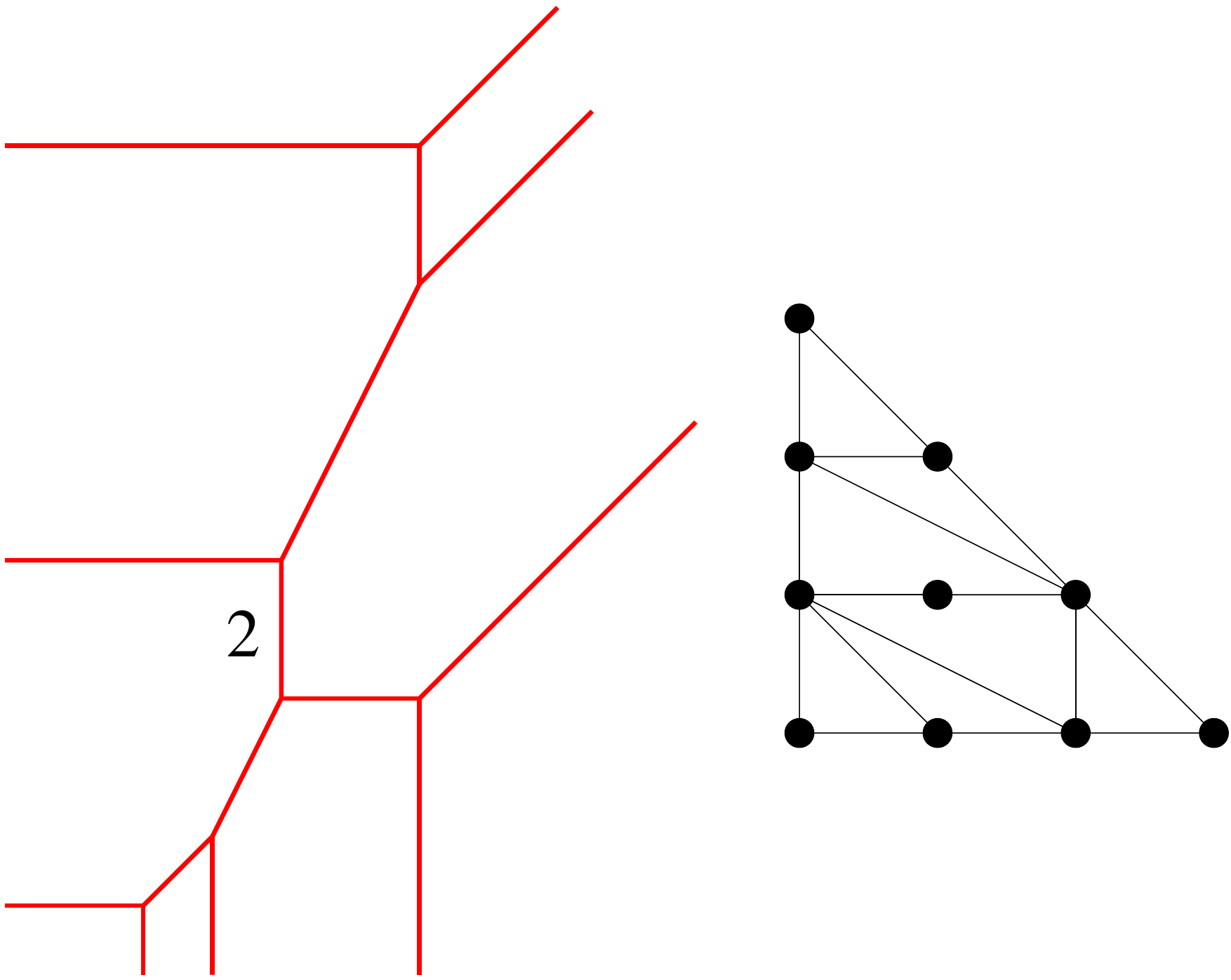}
\\ \\a) && b) &&c)

\end{tabular}
\end{center}
\caption{}
\label{equil}
\end{figure}

\subsection{Exercises}
\begin{exo}
\begin{enumerate}
\item Draw the tropical curves defined by the tropical polynomials 
  $P(x,y)=\tg 5 + 5x + 5y + 4xy+1y^2+x^2\td$ and $Q(x,y)=\tg  7+ 4x + y
  + 4xy+3y^2+(-3)x^2\td$, as well as their dual subdivisions. 
  
\item  A tropical triangle is a domain of $\R^2$ bounded by 
 three tropical lines. What are the possible forms of a tropical triangle?

\item Prove Proposition \ref{prop:weight edge}.
 
\item Show that a tropical curve of degree $d$ has at most $d^2$ vertices. 

\item Find an equation for each of the tropical curves in Figure \ref{equil}. 
The following reminder might be helpful: if $v$ is a vertex of a tropical curve defined by a 
tropical polynomial $P(x, y)$, then the value of $P(x, y)$ in a neighbourhood of $v$ is given
uniquely by the monomials corresponding to the polygon dual to $v$. 

\end{enumerate}
\end{exo}

\section{Tropical intersection theory}\label{sec:trop int}

\subsection{B\'ezout's theorem.}

One of the main interests of tropical geometry is to provide 
a simple model of algebraic geometry. For example, the basic 
theorems from intersection theory of tropical curves require much 
less mathematical background than their 
classical counterparts. The theorem which we have in mind is 
B\'ezout's theorem, which states that  two algebraic curves in the
plane of degrees $d_1$ and $d_2$ respectively,  intersect in $d_1d_2$ 
points\footnote{Warning, this is a theorem in projective geometry!
  For example, we may have two lines in the 
  classical  plane which are
  parallel: if they do not intersect in the plane, they intersect
  nevertheless \textit{at infinity}. 
Also 
one has to count intersection
  points with multiplicity. For example a tangency point between two
  curves will count as 2 intersection points.}.
Before we  tackle the general case, let us first consider tropical lines and
conics. 

As mentioned in the introduction, most of the time two tropical lines
 intersect in exactly one point 
(see  Figure \ref{inter}a), just as in classical geometry.
Now, do a tropical line and a tropical conic intersect each other in two points?
If we naively count the number of intersection points, the answer is sometimes yes (Figure \ref{inter}b)
and sometimes no  (Figure
\ref{inter}c)... 

\begin{figure}[h]
\begin{center}
\begin{tabular}{ccccc}
\includegraphics[width=3cm, angle=0]{Figures/InterDte.pdf}&\hspace{3ex} &
\includegraphics[width=4cm, angle=0]{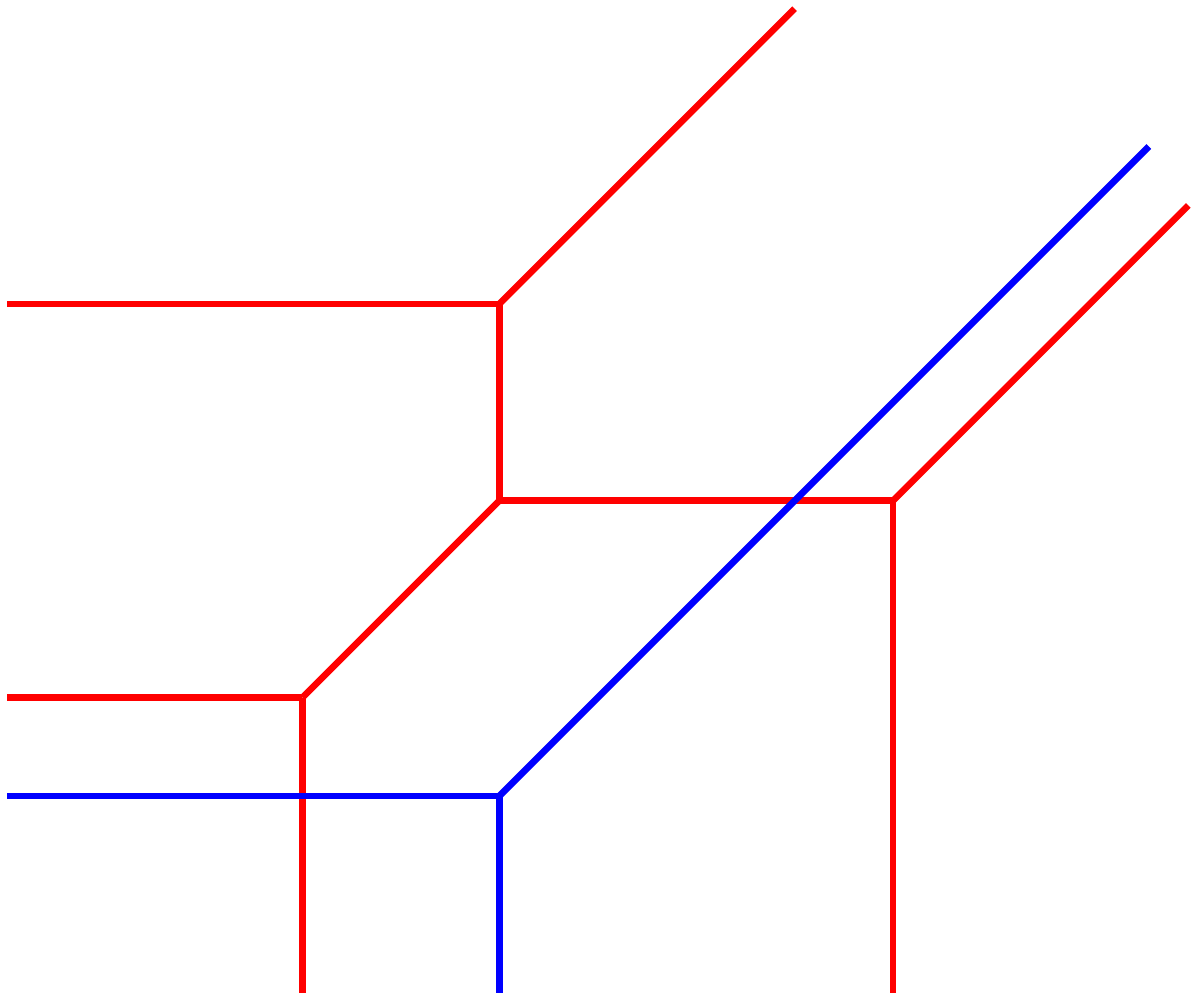}&\hspace{3ex} &
\includegraphics[width=4cm, angle=0]{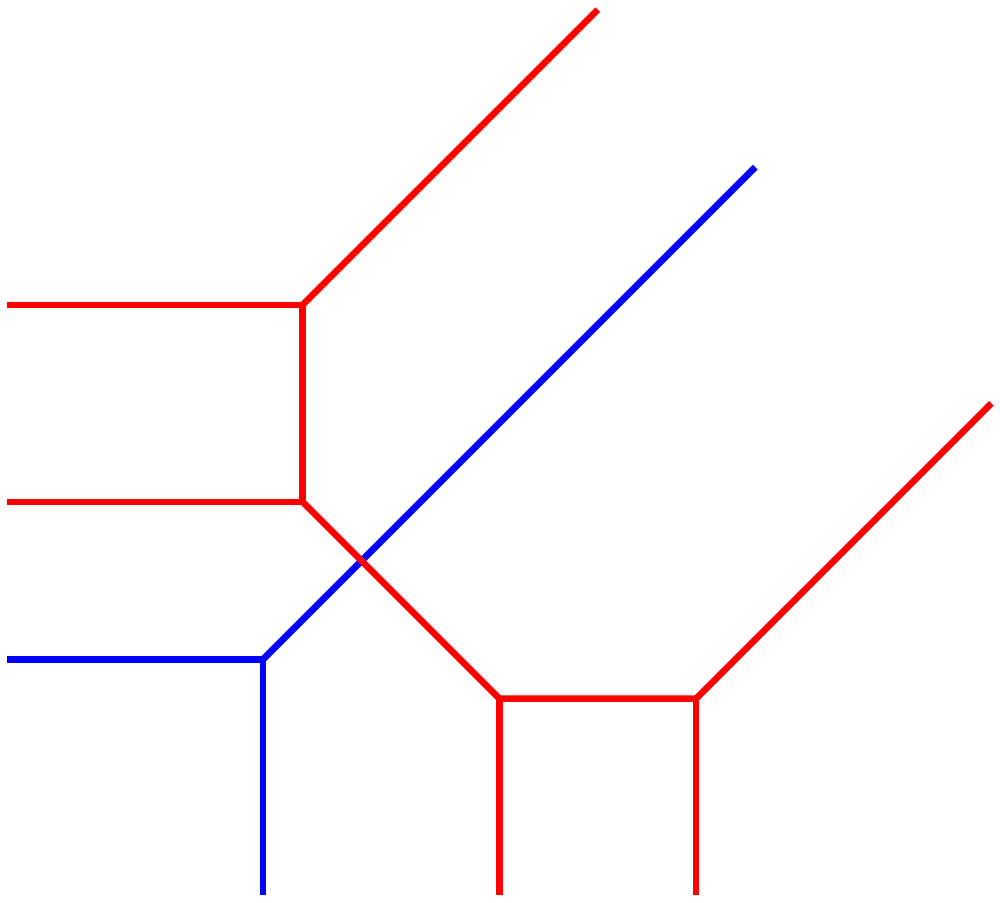} 
\\ \\a) && b) &&c)

\end{tabular}
\end{center}
\caption{Intersections of tropical lines and conics}
\label{inter}
\end{figure}

In fact, the unique intersection point of the conic and the tropical line in  Figure 
\ref{inter}c should be counted twice. But why twice in this case and not in the 
previous case? To find the answer we  look to the dual subdivisions. 

We start by restricting to the case when the curves $C_1, C_2$ intersect in a finite  collection of 
points away from the vertices of both curves. 
Remark that  the union of the two tropical curves $C_1$ and $C_2$
is again a tropical curve. In fact, we can easily verify that the union of two balanced 
graphs is again a balanced graph. Or, we could also easily check that if 
$C_1$ and $C_2$ are defined by tropical polynomials $P_1(x, y)$, $P_2(x,y)$ respectively, 
then $Q(x, y) = ``P_1(x,y)P_2(x,y)"$ defines precisely the curve $C_1 \cup C_2$. 
Moreover, the degree of $C_1 \cup C_2$ is the sum of the degrees of $C_1$ and $C_2$. 

The dual subdivisions of the unions of the two curves $C_1$ and $C_2$ in the 
three cases of Figure  \ref{inter} are represented in 
Figure  \ref{sub inter}. 
In every case, the 
set of vertices of $C_1 \cup C_2$ 
is the union of
the vertices of $C_1$, the vertices of
  $C_2$, and 
  of
  the intersection points of $C_1$ and $C_2$. Moreover, since each point of 
  intersection of $C_1$ and $C_2$ is contained in an edge of both $C_1$ and $C_2$, the 
  polygon dual to such a vertex of $C_1 \cup C_2$ is a parallelogram. To make 
   Figure \ref{sub inter} more transparent, we have drawn each edge of the dual subdivision in the
   same colour as its corresponding dual edge. We can conclude that in 
Figures \ref{sub inter}a and b, the corresponding parallelograms are of area one, whereas the 
corresponding  parallelogram of the dual subdivision in Figure 
\ref{sub inter}c has area two! Hence, it seems that we are counting each intersection point with 
the multiplicity we will describe below. 

\begin{figure}[h]
\begin{center}
\begin{tabular}{ccccc}
\includegraphics[width=2cm, angle=0]{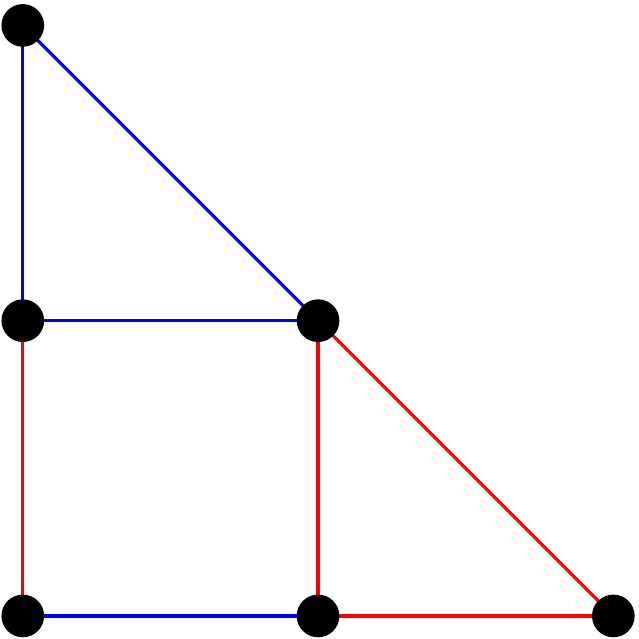}&\hspace{3ex} &
\includegraphics[width=3cm, angle=0]{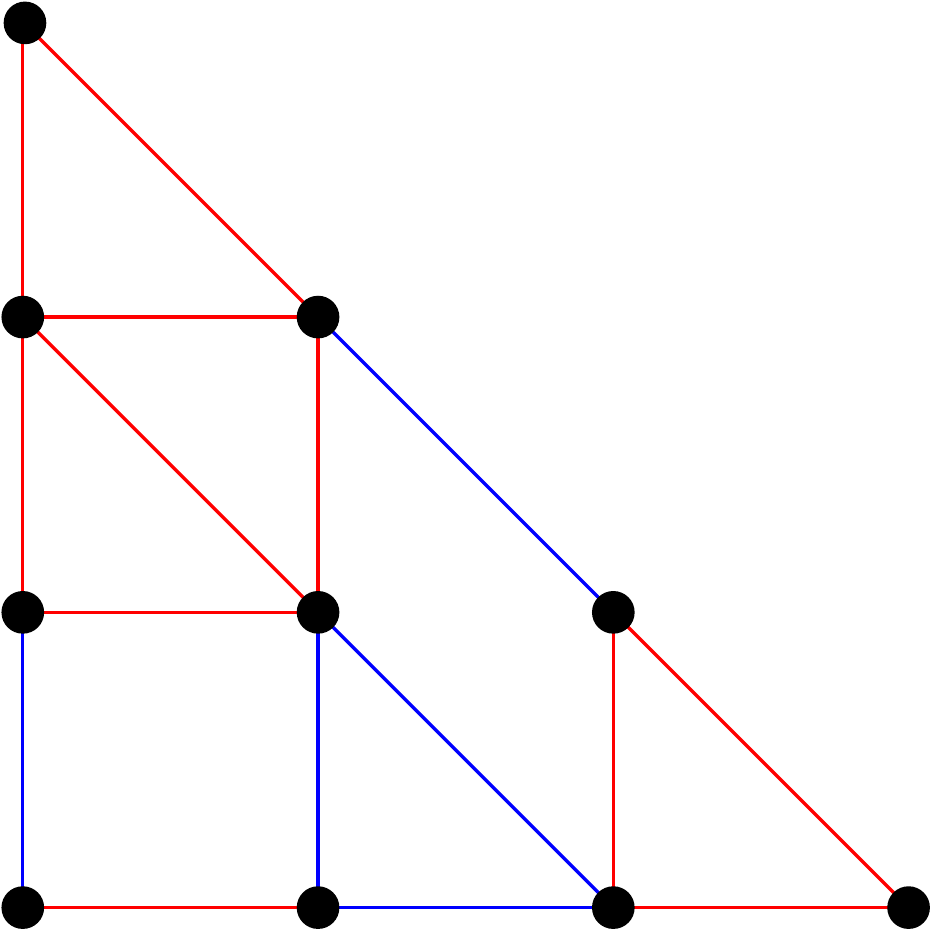}&\hspace{3ex} &
\includegraphics[width=3cm, angle=0]{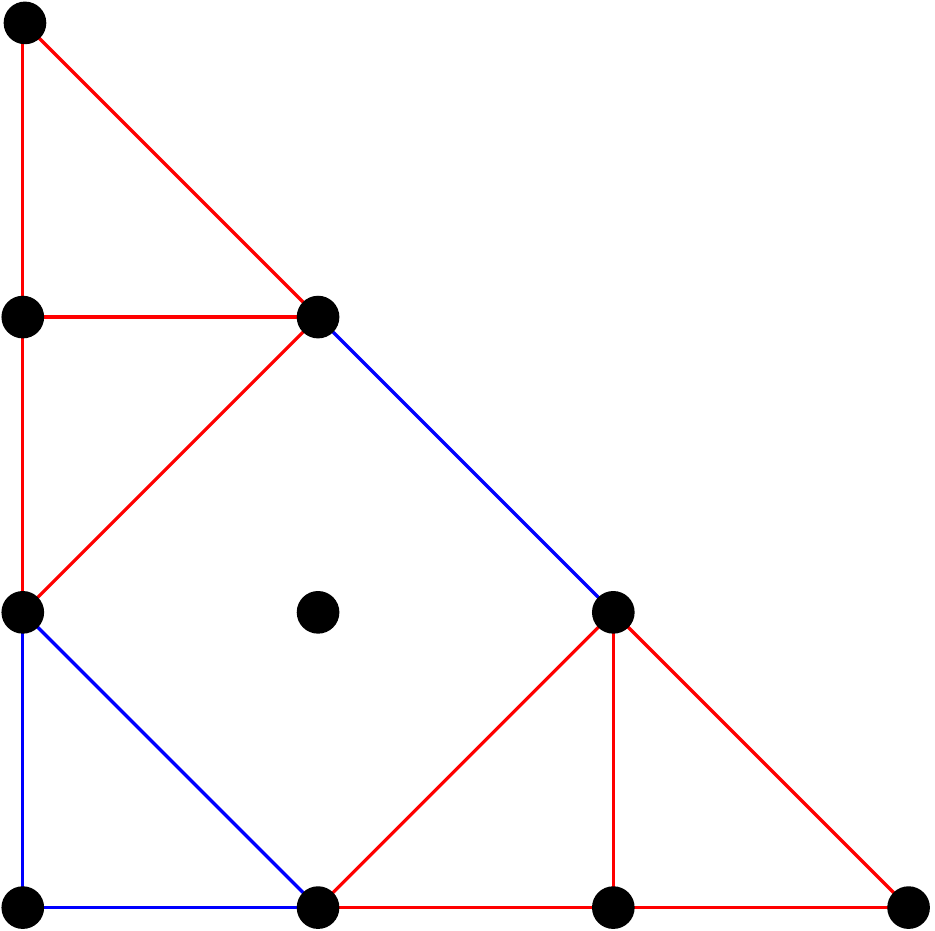} 
\\ \\a) && b) &&c)

\end{tabular}
\end{center}
\caption{The subdivisions dual to the union of the curves  in Figure \ref{inter}}
\label{sub inter}
\end{figure}

\begin{defi}
Let $C_1$ and $C_2$ be two tropical curves which intersect in a 
finite number of points and away from the vertices of the two curves. If
$p$ is a point of intersection of $C_1$ and $C_2$, the tropical multiplicity
of $p$ as an intersection point of $C_1$ and $C_2$ is the area of the parallelogram
dual to $p$ in the dual subdivision of $C_1 \cup C_2$.
\end{defi}

With this definition, proving the tropical B\'ezout's theorem is
a walk in the park!

\begin{thm}[B. Sturmfels]
Let $C_1$ and  $C_2$ be two tropical curves of degrees $d_1$ and $d_2$ respectively, 
 intersecting in a finite number of points away from the vertices of the two curves. 
Then the sum of the tropical multiplicities of all points in the intersection of 
 $C_1$ and $C_2$ is equal to $d_1d_2$.
\end{thm}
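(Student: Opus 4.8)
The plan is to reduce the statement entirely to a computation of areas in the dual subdivision of $C_1 \cup C_2$, exploiting the fact (established in the previous sections) that the collection of polygons $\Delta_v$ dual to the vertices $v$ of a tropical curve forms a subdivision of the triangle $\Delta_d$. The strategy is therefore to add up the areas of all the pieces of this subdivision and match the total against $d_1 d_2$. First I would observe that $Q(x,y) = ``P_1(x,y)P_2(x,y)"$ defines the curve $C_1 \cup C_2$ and that its degree is $d_1 + d_2$; hence the dual subdivision of $C_1 \cup C_2$ is a subdivision of the big triangle $\Delta_{d_1+d_2}$, whose total area is $\frac{(d_1+d_2)^2}{2}$.

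Next I would classify the polygons appearing in this subdivision according to the three types of vertices of $C_1 \cup C_2$ identified just before the statement: vertices coming from $C_1$ alone, vertices coming from $C_2$ alone, and the crossing points $p$ of $C_1$ with $C_2$. The polygons dual to the first two types reassemble, respectively, into (translated copies of) the dual subdivision of $C_1$ — whose polygons tile $\Delta_{d_1}$, of total area $\frac{d_1^2}{2}$ — and the dual subdivision of $C_2$, of total area $\frac{d_2^2}{2}$. The polygons dual to the crossing points are exactly the parallelograms whose areas, by the preceding definition, are the tropical multiplicities of the intersection points. Writing $\sum_p \mathrm{mult}(p)$ for the sum of these areas, the additivity of area over the subdivision gives
\[
\frac{(d_1+d_2)^2}{2} = \frac{d_1^2}{2} + \frac{d_2^2}{2} + \sum_p \mathrm{mult}(p),
\]
and expanding the left-hand side yields $\sum_p \mathrm{mult}(p) = d_1 d_2$, which is the desired conclusion.

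The step I expect to be the genuine obstacle is justifying that the area is cleanly partitioned in exactly this way — that is, that the polygons of the subdivision of $\Delta_{d_1+d_2}$ fall into precisely these three families with no leftover pieces and no double counting. Concretely one must argue that, under the genericity hypothesis that $C_1$ and $C_2$ meet in finitely many points away from all vertices, every polygon $\Delta_v$ of the combined subdivision is a translate either of a polygon from the subdivision of $C_1$, or of one from $C_2$, or is one of the intersection parallelograms; and conversely that each polygon of the $C_i$ subdivisions and each intersection parallelogram occurs exactly once. This is the content of the assertion that the subdivision dual to $C_1\cup C_2$ is the \emph{common refinement} (Minkowski-type superposition) of the subdivisions dual to $C_1$ and to $C_2$, with the new cells being precisely the parallelograms $\delta_{e}+\delta_{e'}$ arising from an edge $e$ of $C_1$ crossing an edge $e'$ of $C_2$. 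Once this combinatorial bookkeeping is in place the area computation above is immediate, so I would spend the bulk of the proof making that partition rigorous and checking the edge-versus-edge parallelograms are indeed the duals of the crossing points.
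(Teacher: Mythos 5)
Your argument is exactly the paper's proof: classify the polygons of the subdivision dual to $C_1\cup C_2$ into those dual to vertices of $C_1$, those dual to vertices of $C_2$, and the parallelograms dual to crossing points, then compare total areas to get $s=\frac{(d_1+d_2)^2-d_1^2-d_2^2}{2}=d_1d_2$. The paper states the three-way classification without further justification, so your closing remarks on making the partition rigorous go slightly beyond what is written there, but the route is the same.
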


\begin{proof}
Let us call this sum of multiplicities  $s$. 
Notice that there are three types of polygons in the
subdivision 
dual
to the tropical curve  $C_1\cup C_2$: 
\begin{itemize}
\item those which are dual to a vertex of $C_1$. The sum of their areas is equal to
the area of  $\Delta_{d_1}$, in other words  $\frac{d_1^2}{2}$,
\item  those which are dual to a vertex of $C_2$. The sum of their areas is equal to
  $\frac{d_2^2}{2}$,
\item those dual to a intersection point of $C_1$ and $C_2$. The sum of their areas we
have called $s$.
\end{itemize}
Since the curve $C_1\cup C_2$ is of degree $d_1 + d_2$, the sum of the area of all 
of these polygons is equal to the area of 
$\Delta_{d_1+d_2}$, which is  $\frac{(d_1+d_2)^2}{2}$. Therefore, we obtain
$$s=\frac{(d_1+d_2)^2 -d_1^2 -d_2^2}{2}= d_1d_2, $$ which completes the proof. 
\end{proof}

\subsection{Stable intersection}

In the last section we considered only tropical curves which intersect ``nicely", 
meaning they intersect only in a finite number of points and away from the vertices of
the two curves. 
But what can we say in the two cases shown in the Figures
 \ref{inter st}a (two tropical lines that intersect in an edge) and  \ref{inter st}b (a tropical line passing 
 through a vertex of a conic)? 
 Thankfully, we have more than one tropical trick up our sleeve.

\begin{figure}[h]
\begin{center}
\begin{tabular}{ccccccc}
\includegraphics[width=3cm, angle=0]{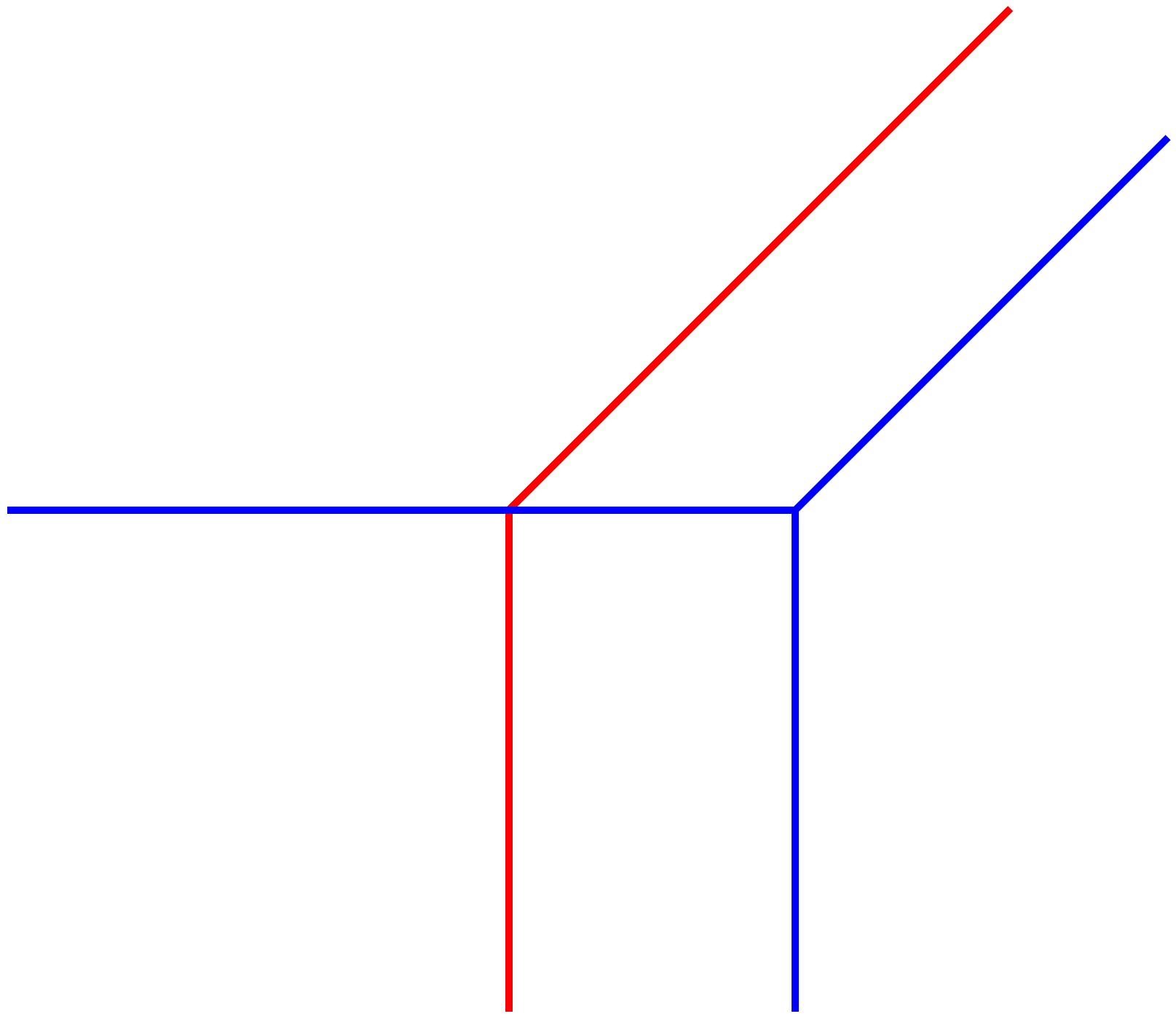}&\hspace{3ex} &
\includegraphics[width=3cm, angle=0]{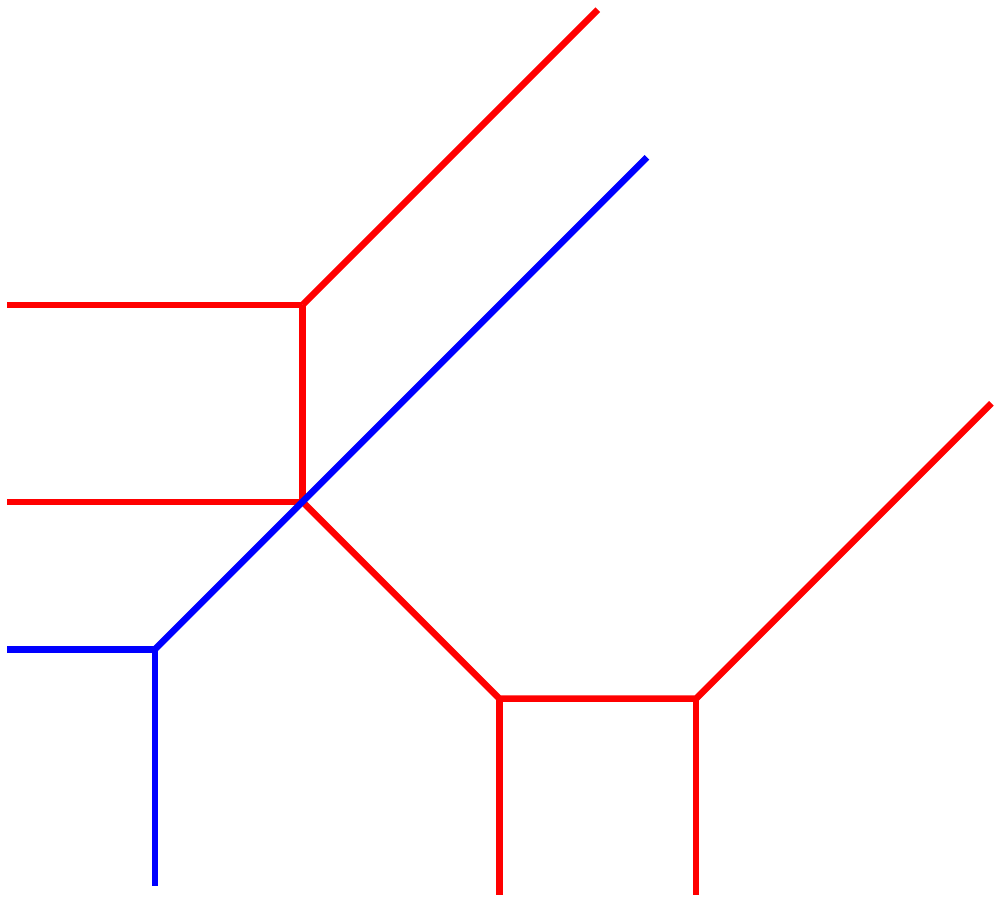}&\hspace{3ex} &
\includegraphics[width=3cm, angle=0]{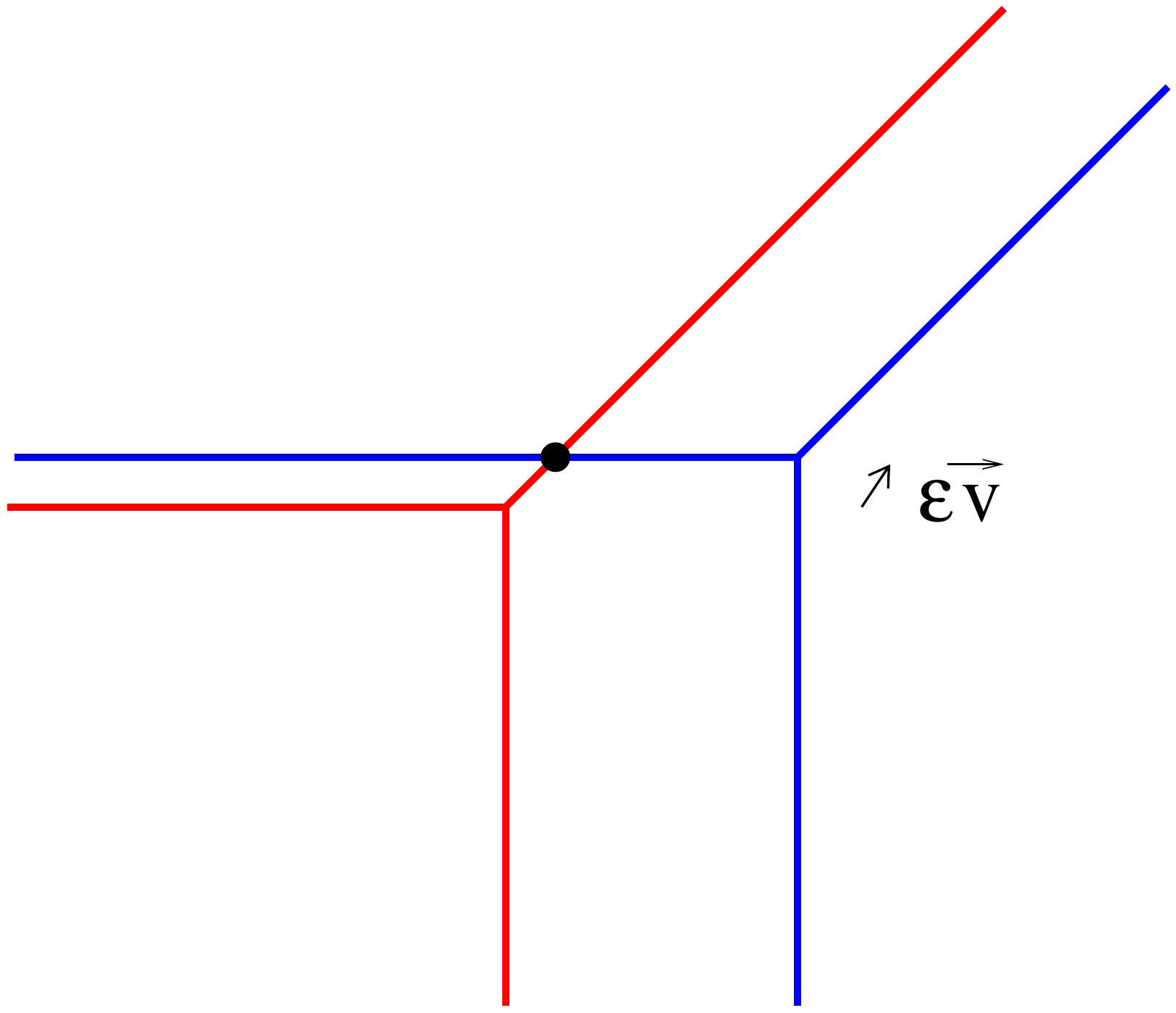}&\hspace{3ex} &
\includegraphics[width=3cm, angle=0]{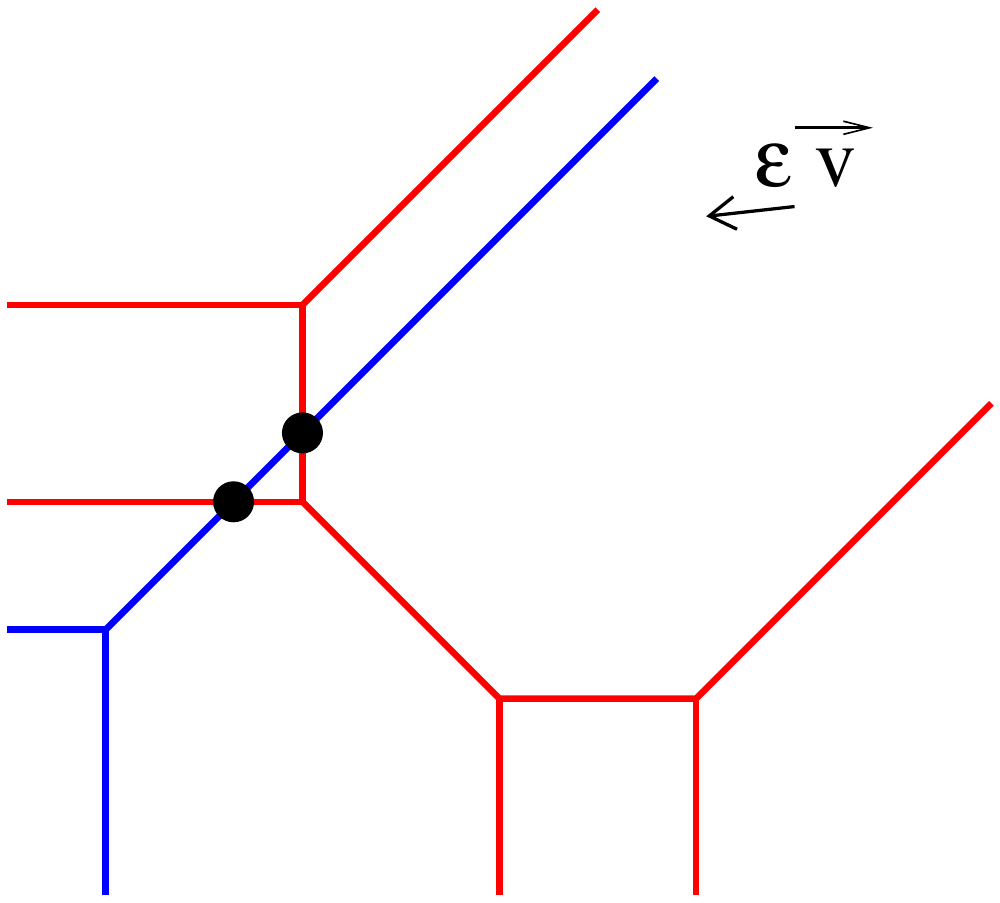} 
\\ \\a) && b)  &&c) &&d)

\end{tabular}
\end{center}
\caption{Non-transverse intersection and a translation}
\label{inter st}
\end{figure}

Let $\varepsilon$  be a very small positive real number and  $\vec v$  a
vector such that the quotient of its two coordinates is an irrational number. 
If we translate in each of the two cases one of the two curves in Figure \ref{inter st}a and b 
by the vector 
 $\varepsilon\vec v$, we find ourselves back in the case of ``nice" intersection 
 (see Figures  \ref{inter st}c and d). Of course,  the resulting intersection
 depends on the vector  $\varepsilon\vec v$. 
But on the other hand, the limit of these points, if we let  
$\varepsilon$ shrink to  0, does not depend on $\vec v$. The points in the limit are called the
\textit{stable intersection points} of two curves. 
The multiplicity of a point $p$  in the stable intersection is equal to the sum of the intersection
 multiplicities of all points which converge to $p$ when $\varepsilon$ tends to zero.

For example, there is only one stable intersection point of the two lines in
Figure
\ref{inter st}a. The point   is the vertex of the line on the left. Moreover, this point has multiplicity 1, so
again, our two tropical lines intersect in a single point. 
The point of stable intersection of the two curves in Figure
\ref{inter st}b is the vertex of the conic, and it has multiplicity 2.  

Notice that if  a point is in the stable intersection 
of two tropical curves 
it is either an isolated intersection point, 
or a  vertex of one of the two curves. 
Thanks to stable intersection, we can remove from our previous statement 
of the tropical B\'ezout theorem the hypothesis that the curves must intersect
``nicely".

\begin{thm}[B. Sturmfels]
Let $C_1$ and $C_2$ be two tropical curves of degree $d_1$ and
$d_2$. Then the sum of the multiplicities of the stable intersection points of 
 $C_1$ and $C_2$ is equal to $d_1d_2$.
\end{thm}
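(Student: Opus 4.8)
The plan is to reduce this statement to the previous (transverse) version of the theorem by means of the very translation trick used to define the stable intersection. The first point I would establish is that translation does not change the degree: translating a tropical curve preserves the direction and the weight of each of its edges, in particular of the infinite edges in the direction $(-1,0)$, so by the characterisation of the degree given just after Proposition~\ref{prop:weight edge} the translate $C_2 + \varepsilon \vec v$ still has degree $d_2$ for every $\varepsilon$. Equivalently, passing from $C_2$ to its translate changes the coefficients $a_{i,j}$ by an affine function of the exponents $(i,j)$, which alters neither $\Delta_{d_2}$ nor the induced dual subdivision. Thus for every $\varepsilon$ the pair $C_1$ and $C_2 + \varepsilon\vec v$ consists of curves of degrees $d_1$ and $d_2$.

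Next I would show that, for a direction $\vec v$ of irrational slope and for all but finitely many small $\varepsilon > 0$, the curves $C_1$ and $C_2 + \varepsilon \vec v$ intersect ``nicely''. All edges of $C_1$ and of $C_2 + \varepsilon \vec v$ have rational slope, whereas $\vec v$ does not; hence two edges that were collinear (the only way the intersection could be infinite, by overlapping along a segment) are pushed onto \emph{distinct} parallel lines and no longer meet, for every $\varepsilon > 0$. Moreover the finitely many vertices of $C_1$ are fixed while those of $C_2 + \varepsilon \vec v$ move affinely in $\varepsilon$, so over the finitely many relevant pairs of edges only finitely many values of $\varepsilon$ place an intersection point on a vertex. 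Discarding these, the intersection is transverse. Note also that as $\varepsilon \to 0$ no intersection point escapes to infinity: the only edges that could contribute a point far out are overlapping rays, and these become disjoint parallels after translation, contributing nothing.

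With transversality secured, the first B\'ezout theorem applies verbatim and gives, for each such $\varepsilon$,
$$ \sum_{q \in C_1 \cap (C_2 + \varepsilon \vec v)} \text{mult}(q) = d_1 d_2, $$
where, crucially, the right-hand side does not depend on $\varepsilon$. Finally I would let $\varepsilon$ tend to $0$. Since the curves are fixed up to an arbitrarily small translation and have finitely many edges, the intersection points remain in a bounded region, are finitely many, and (for $\varepsilon$ small) have a stable combinatorial type, so each $q$ converges to a well-defined stable intersection point $p$. By the \emph{definition} of the stable multiplicity, $\text{mult}(p)$ is exactly the sum of the $\text{mult}(q)$ over all $q$ converging to $p$. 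Regrouping the displayed sum according to these limits therefore yields
$$ \sum_{p} \text{mult}(p) = \sum_{q \in C_1 \cap (C_2 + \varepsilon \vec v)} \text{mult}(q) = d_1 d_2, $$
which is the assertion.

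The step I expect to be the main obstacle is the transversality and boundedness claims of the second paragraph, together with the bookkeeping in the limit: one must be certain that no intersection multiplicity is created or lost as $\varepsilon \to 0$, i.e.\ that every transverse point converges and is accounted for by the grouping, and that none runs off to infinity. Once this is in place the conclusion is essentially free, because the transverse count $d_1 d_2$ is \emph{constant} in $\varepsilon$ and passing to the stable multiplicities is merely a regrouping of a fixed sum; in particular the argument does not even require the (independently true) fact that the stable intersection points are themselves independent of $\vec v$.
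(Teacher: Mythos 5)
Your proof is correct and takes essentially the route the paper intends: the paper states this theorem without a separate proof, treating it as an immediate consequence of the definition of stable intersection (perturb by $\varepsilon\vec v$, apply the transverse B\'ezout theorem, let $\varepsilon\to 0$ and regroup), which is exactly the deduction you carry out. Your added care about transversality for generic small $\varepsilon$ and about no intersection point escaping to infinity correctly fills in the details the paper leaves implicit.
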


In passing, we may notice a surprising tropical 
phenomenon:  a tropical curve has a well defined 
\textit{self-intersection}\footnote{In classical algebraic geometry, only the
total number of points in the self-intersection of a planar curve is 
well-defined, not the positions of the points on the curve. We can still 
say that a line intersects itself in one point, but it is not at all clear which 
point\dots}! 
Indeed, 
all we have to do is to consider 
the stable intersection of a tropical curve with itself. Following the discussion
above, the self-intersection 
points
of a curve  
are
the 
vertices of the curve. 
(see Figure \ref{auto
  inter}).

\begin{figure}[h]
\begin{center}
\begin{tabular}{c}
\includegraphics[width=10cm, angle=0]{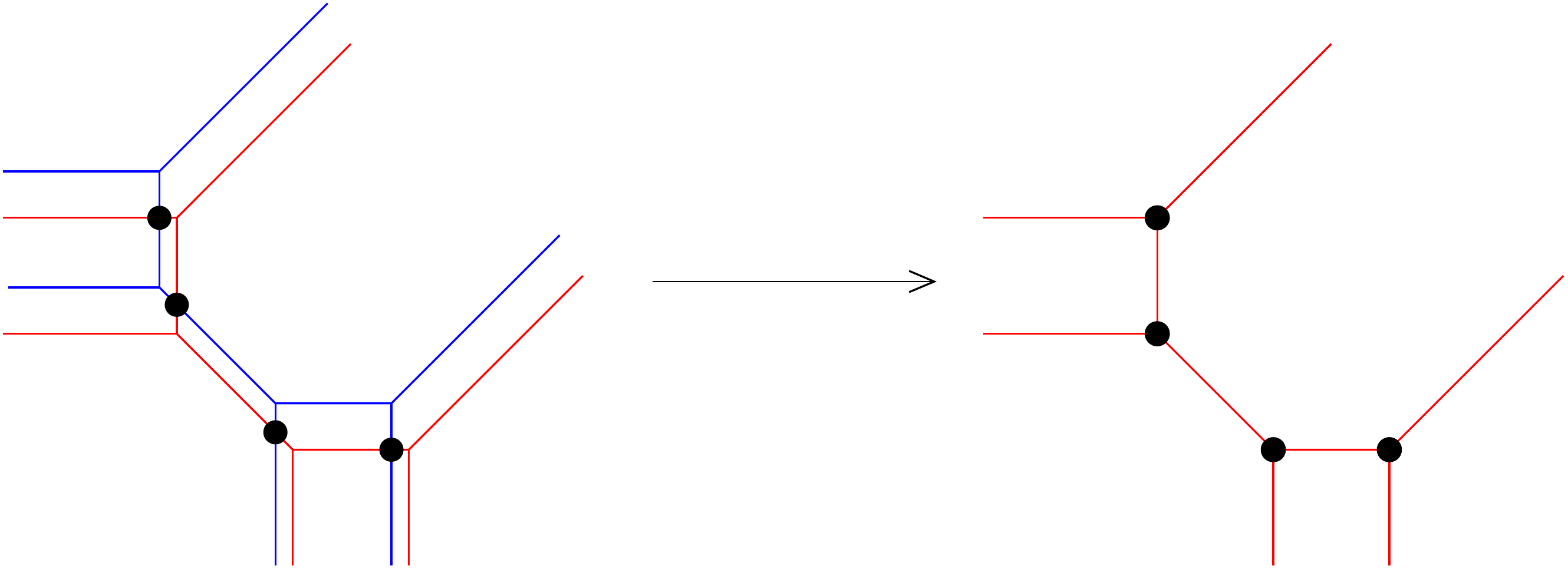}

\end{tabular}
\end{center}
\caption{The 4 points in the self-intersection of a conic}
\label{auto inter}
\end{figure}

\subsection{Exercises}
\begin{exo}
\begin{enumerate}
\item Determine the stable intersection points of the two tropical 
curves in Exercise 1 of Section 2, as well as their multiplicity. 
\item A double point of a tropical curve is a point where two edges 
intersect (i.e.~the 
dual polygon to the vertex of the curve is a parallelogram). Show that a tropical conic with a double point 
is the union of two tropical lines. Hint: consider a line passing through the 
double point of the conic and any other vertex of the conic. 
\item Show that a tropical curve of degree 3 with two double points is the union of 
a line and a tropical conic. Show that a tropical curve of degree 3 with 3 double 
points
 is the
union of 3 tropical lines. 

\end{enumerate}
\end{exo}

\section{A few explanations}\label{expl}

Let us pause for a while with our introduction of tropical geometry to explain 
briefly some connections between classical and tropical geometry. 
In particular, our goal is to illustrate the fact that tropical geometry is a limit of classical 
geometry. 
If we were to summarise roughly the content of this
section in one sentence it would be: tropical geometry is the
image of classical geometry under the logarithm with base $+\infty$.

\subsection{Maslov dequantisation.}

First of all let us explain how the tropical semi-field arises naturally as the 
limit of  some classical semi-fields.  This procedure, studied by Victor Maslov 
and his collaborators beginning in the 90's, is known as \textit{dequantisation of 
the real numbers}. 

A well-known semi-field  is the set of positive or zero real numbers 
together with the usual addition and multiplication,  denoted $(\R_+,+,\times)$.  
If $t$ is a strictly positive real number, then the logarithm of base $t$ 
provides a bijection between the sets $\R$ and $\T$. This bijection
induces a semi-field structure on $\T$ with the operations denoted by 
$\tg +_t\td$ and $\tg\times_t\td$, and given by:
$$\tg x +_t  y\td= \log_t(t^x +t^y) \  \ \ \text{ and } \ \ \  \tg x \times_t  y\td=
\log_t(t^x t^y) = x+y.$$

The equation on the right-hand side already shows classical addition appearing as an exotic kind of multiplication on $\T$. 
Notice that by construction, all of the semi-fields 
$(\T,\tg +_t\td,\tg
\times_t\td)$ are isomorphic to $(\R_+,+,\times)$. 
The trivial inequality 
$\max(x,y)\le x+y\le 2\max(x,y)$ on $\R_+$ together with the fact that the logarithm is
an increasing function gives us the following bounds for $\tg +_t\td$:
$$\forall t>1,  \ \  \max(x,y) \le \tg x +_t  y\td \le \max(x,y)
+\log_t 2.$$
If we let $t$ tend to infinity, then  $\log_t 2$ tends to $0$, 
and the operation
$\tg +_t\td$ therefore tends to 
the tropical addition $\tg +\td$! 
Hence
 the tropical semi-field comes naturally from degenerating the classical semi-field 
$(\R_+,+,\times)$.
From an alternative perspective, we can view the classical semi-field 
$(\R_+,+,\times)$ as a \emph{deformation} of the tropical semi-field. This explains the use of the 
term ``dequantisation",
coming from physics and referring
to the procedure of passing 
from quantum to classical mechanics.

\subsection{Dequantisation of a line in the plane.}\label{deqdte}

Now we will apply a similar reasoning to the line in the plane $\R^2$ defined by the equation
$x-y+1$ 
 (see  Figure \ref{amibe}a). 
To apply the  logarithm map  to the coordinates of $\R^2$, we must first take their absolute values.
Doing this results in  folding the 4 quadrants of $\R^2$ onto the positive quadrant (see  Figure
\ref{amibe}b). 
The image of the folded up line under the coordinate-wise logarithm with base $t$ 
 applied to $(\RR_+^*)^2$ is drawn in Figure
  \ref{amibe}c. By definition, taking the logarithm with base $t$ is the same thing as 
  taking the natural logarithm 
  and then rescaling the result  by a factor of $\frac{1}{\ln t}$. 
Thus, as $t$ increases, the image under the logarithm with base $t$ 
of the absolute value 
of our line becomes concentrated around a neighbourhood of the origin and  three 
asymptotic directions, as shown in Figures \ref{amibe}c, d and e. 
If we allow $t$ go all the way to infinity, then we see the appearance in   Figure \ref{amibe}f of... a tropical line!

\begin{figure}[h]
\begin{center}
\begin{tabular}{cccccc}
\includegraphics[width=2.5cm,  angle=0]{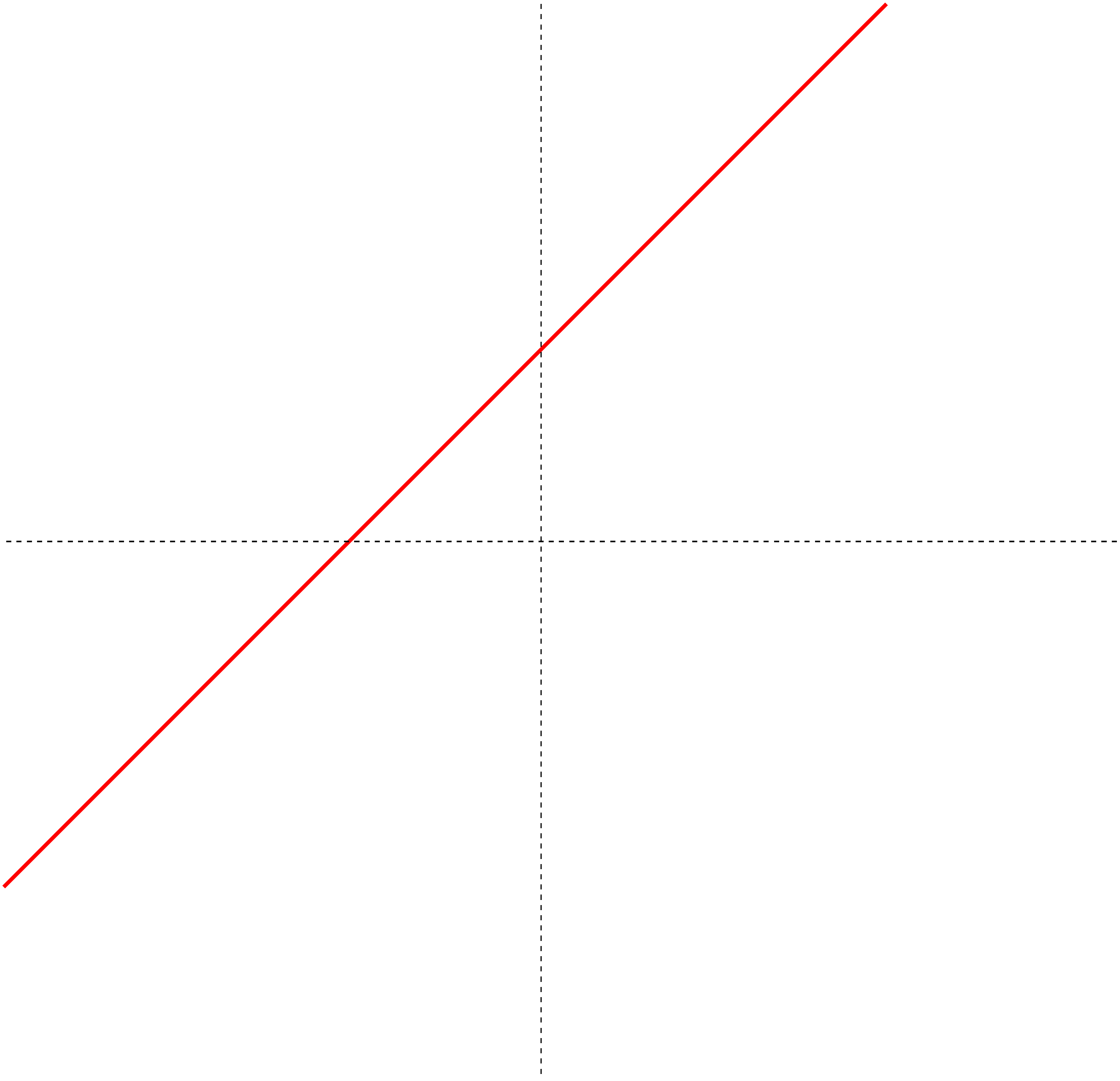}&
\includegraphics[width=2.5cm, angle=0]{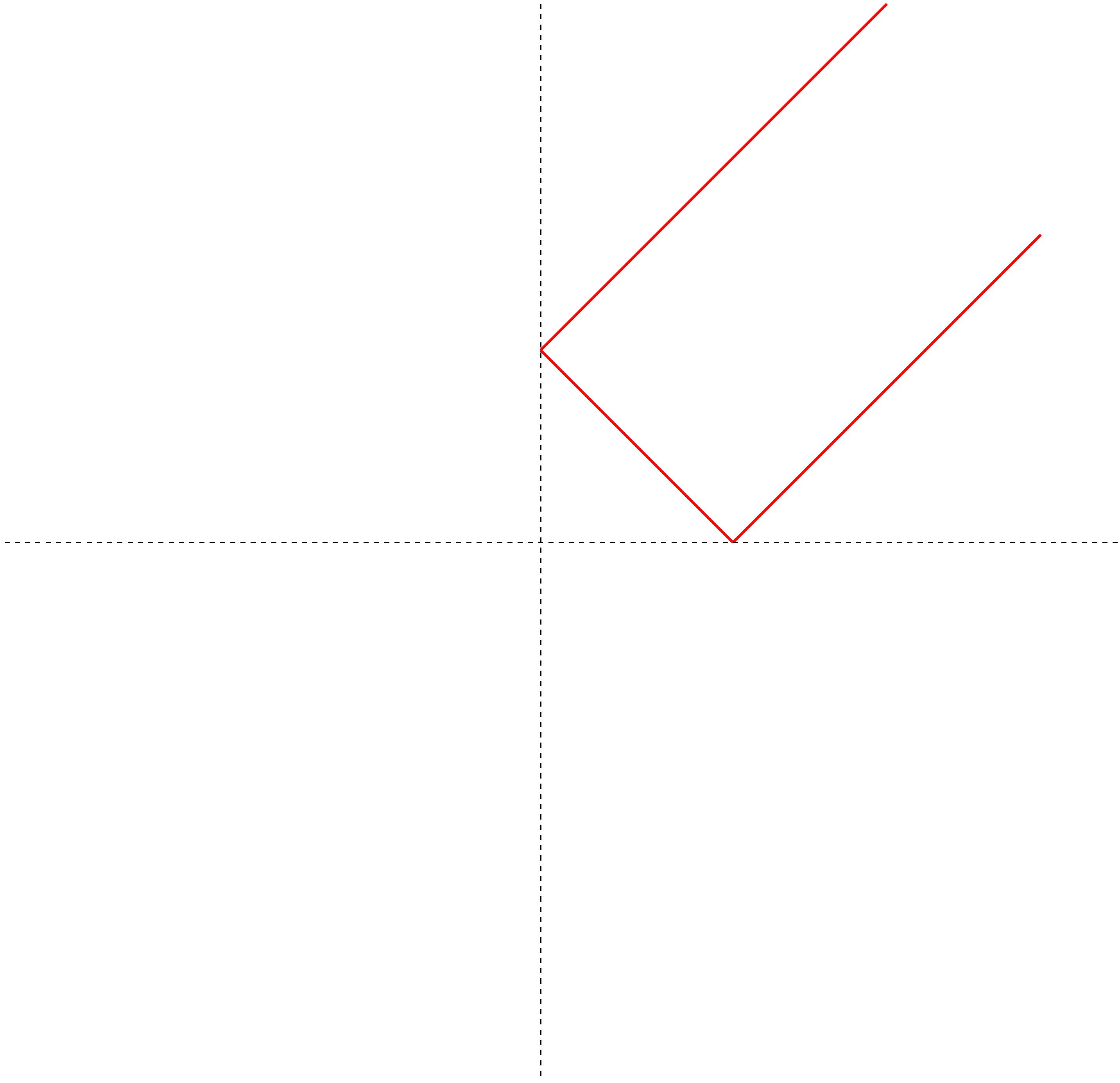}&
\includegraphics[width=2.5cm, angle=0]{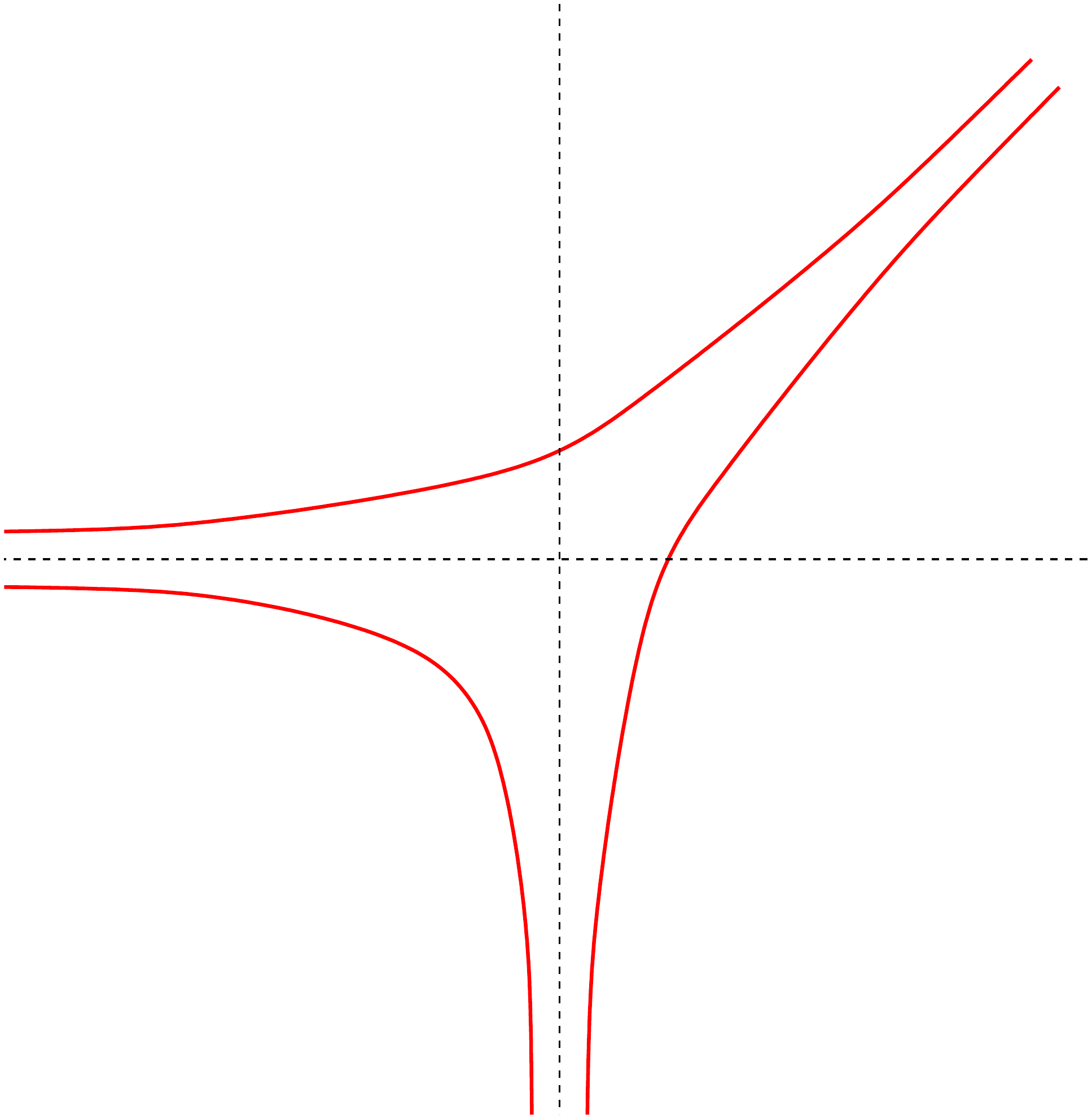}&
\includegraphics[width=2.5cm, angle=0]{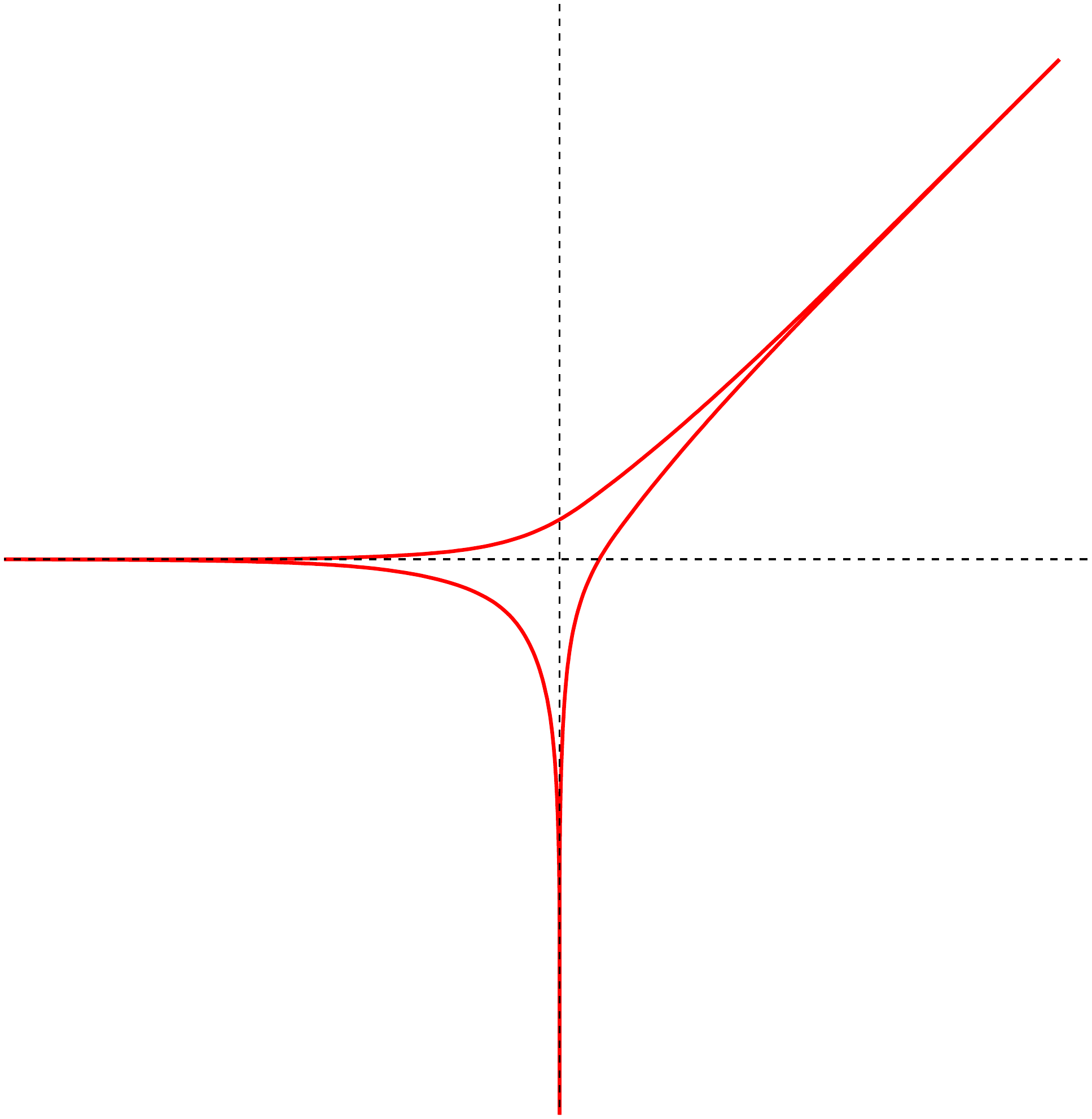}&
\includegraphics[width=2.5cm, angle=0]{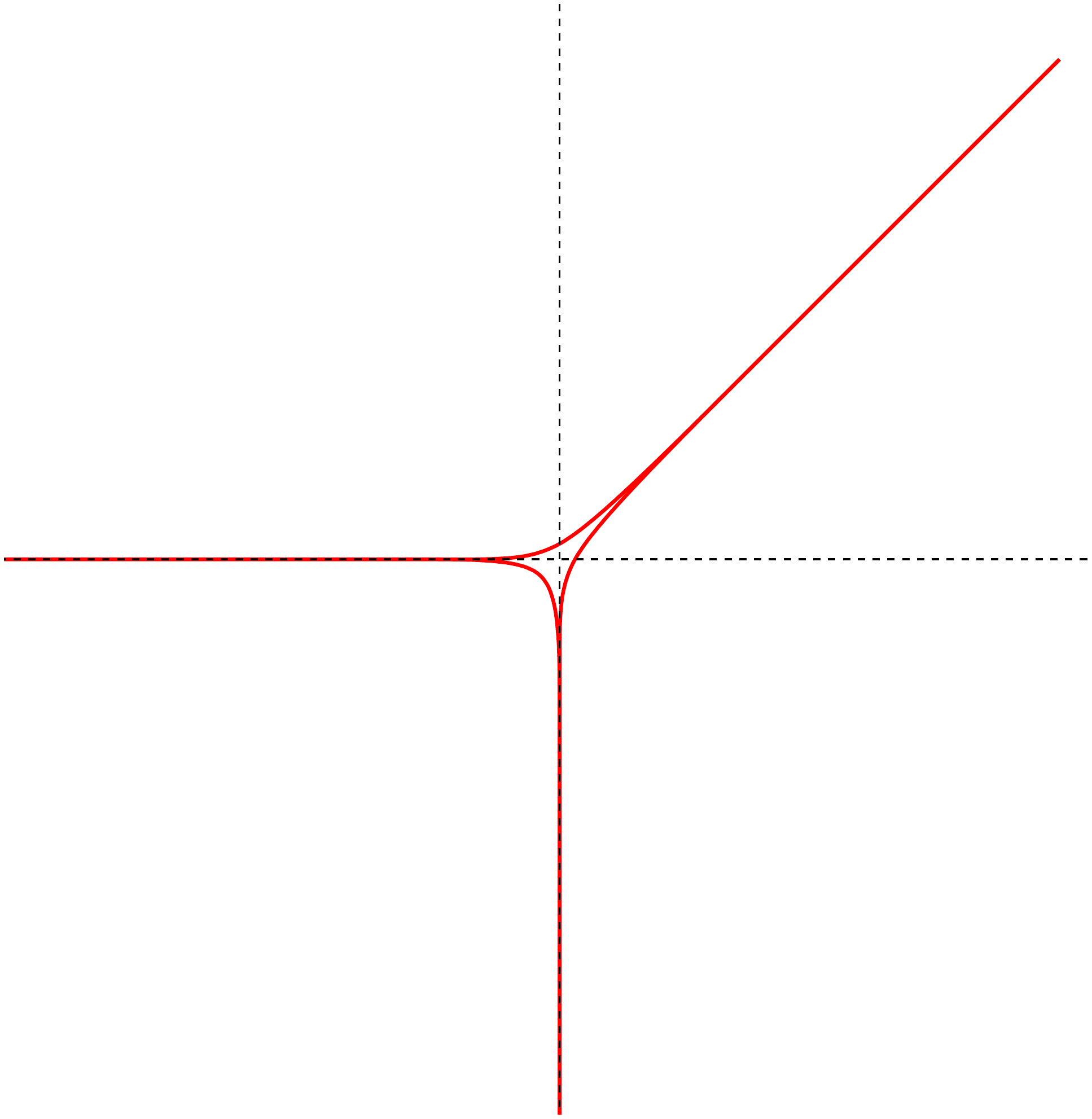}&
\includegraphics[width=2.5cm, angle=0]{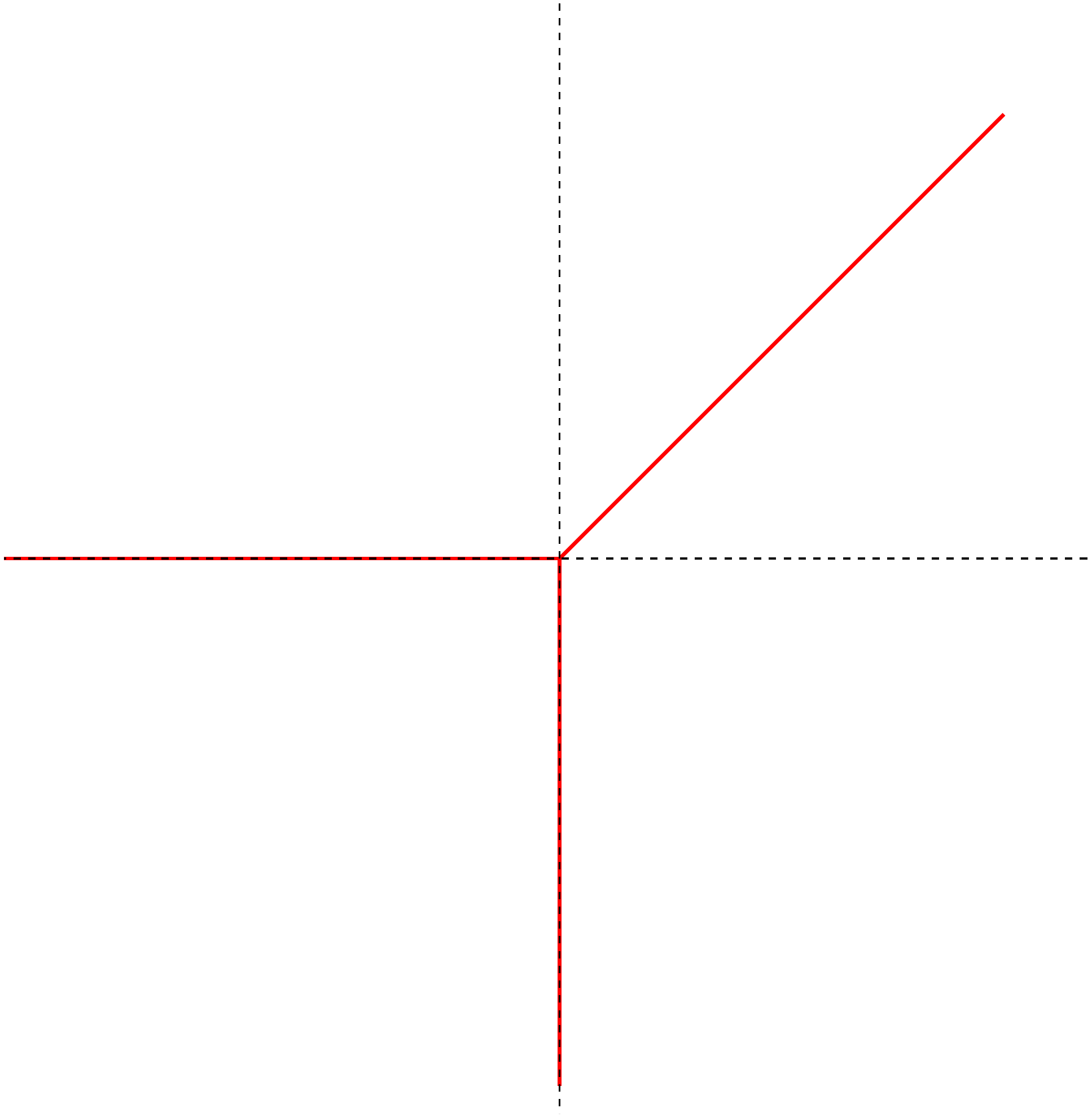}
\\ \\ a) & b) & c) &d)&e)& f)

\end{tabular}
\end{center}
\caption{Dequantisation of a line}
\label{amibe}
\end{figure}

\section{Patchworking}\label{patchwork}

In reading Figure  \ref{amibe} from left to right, we see how starting from a classical line in the plane we can arrive at a tropical line. 
Reading this figure from right to left is in fact much more interesting!
Indeed, we see as well how to construct a classical line given a tropical one. 
The technique known as \textit{patchworking} is a generalisation of this observation. 
In particular, it provides a purely combinatorial  procedure to construct real algebraic curves 
from a tropical curve. In order to explain this procedure in more detail, we first  go a bit back in time.

\subsection{Hilbert's 16th problem.}\label{sec:16}

A \textit{planar real algebraic curve} is a curve in the plane $\R^2$ defined by 
an equation of the form $P(x,y)=0$, where $P(x,y)$ is a polynomial whose
coefficients are real numbers. 
The real algebraic curves of degree 1 and 2 are simple and well-known; they
are lines and conics respectively. 
When the degree of  $P(x,y)$ increases, the form of the real algebraic curve can 
become more and more complicated. If you are not convinced, take a look at figure 
 \ref{quartic} which shows some of the possible drawings  
realised
by real algebraic curves of degree 4.

\begin{figure}[h]
\begin{center}
\begin{tabular}{ccccccc}
\includegraphics[width=3cm,  angle=0]{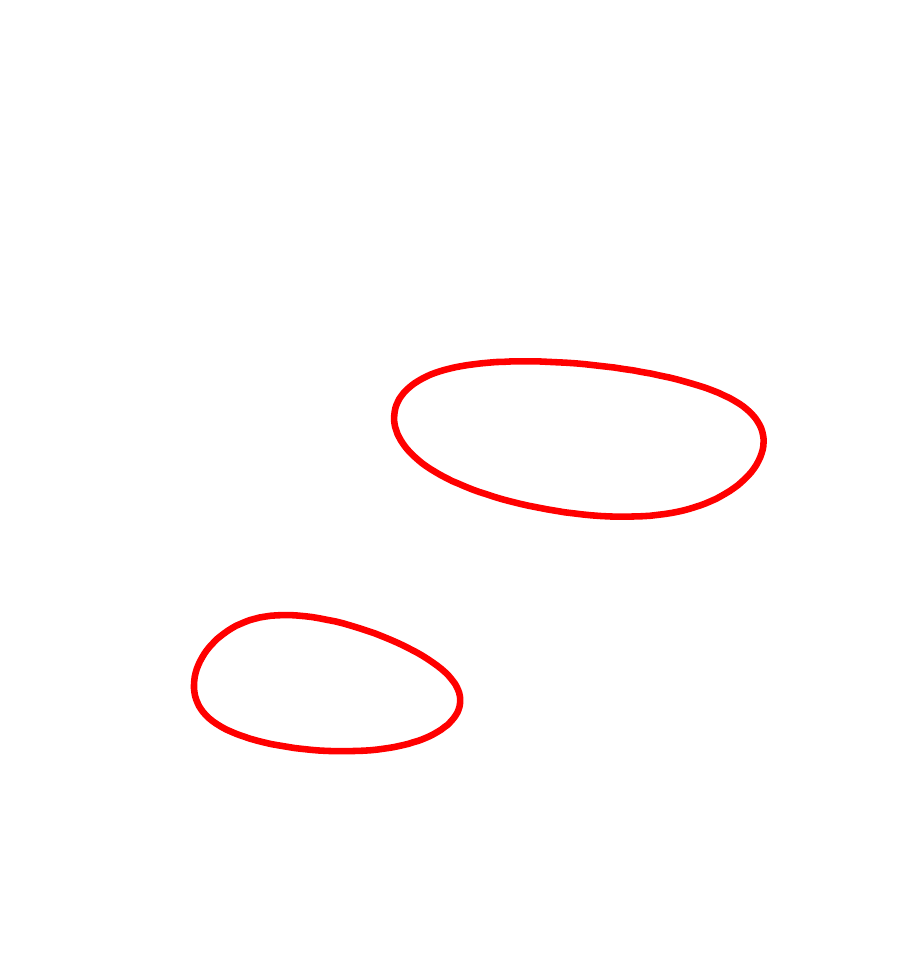}&\hspace{3ex} &
\includegraphics[width=3cm, angle=0]{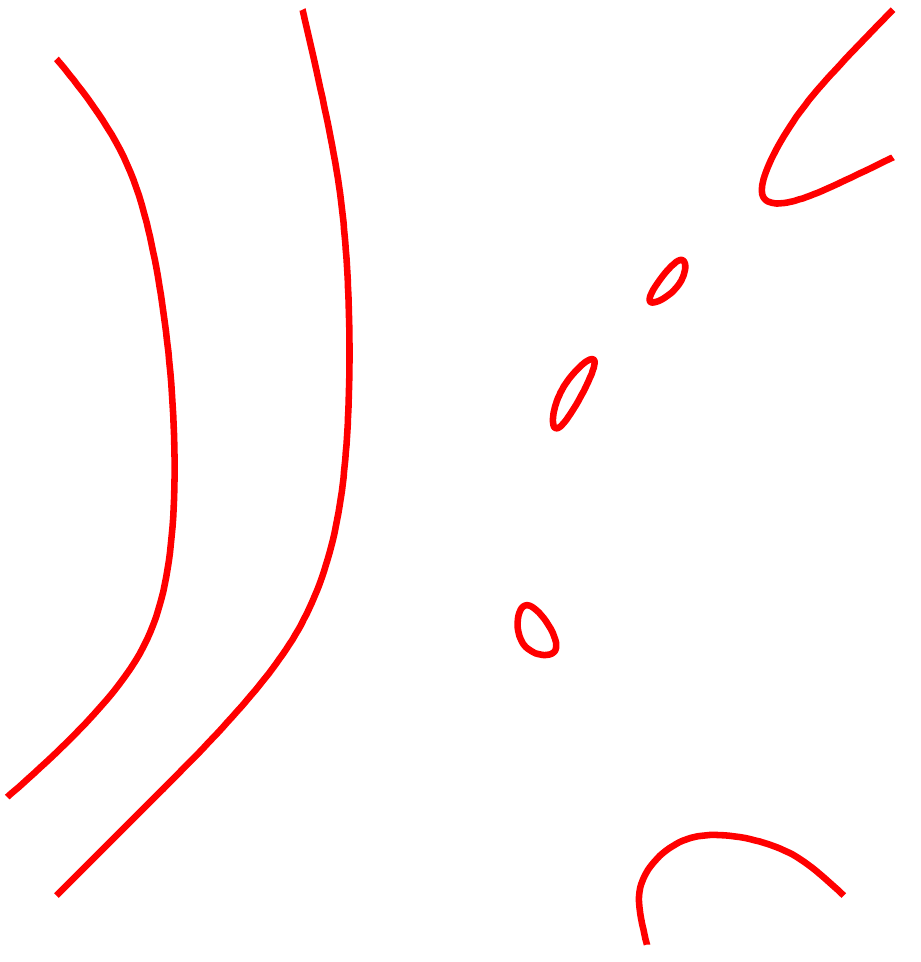}&\hspace{3ex} &
\includegraphics[width=3cm, angle=0]{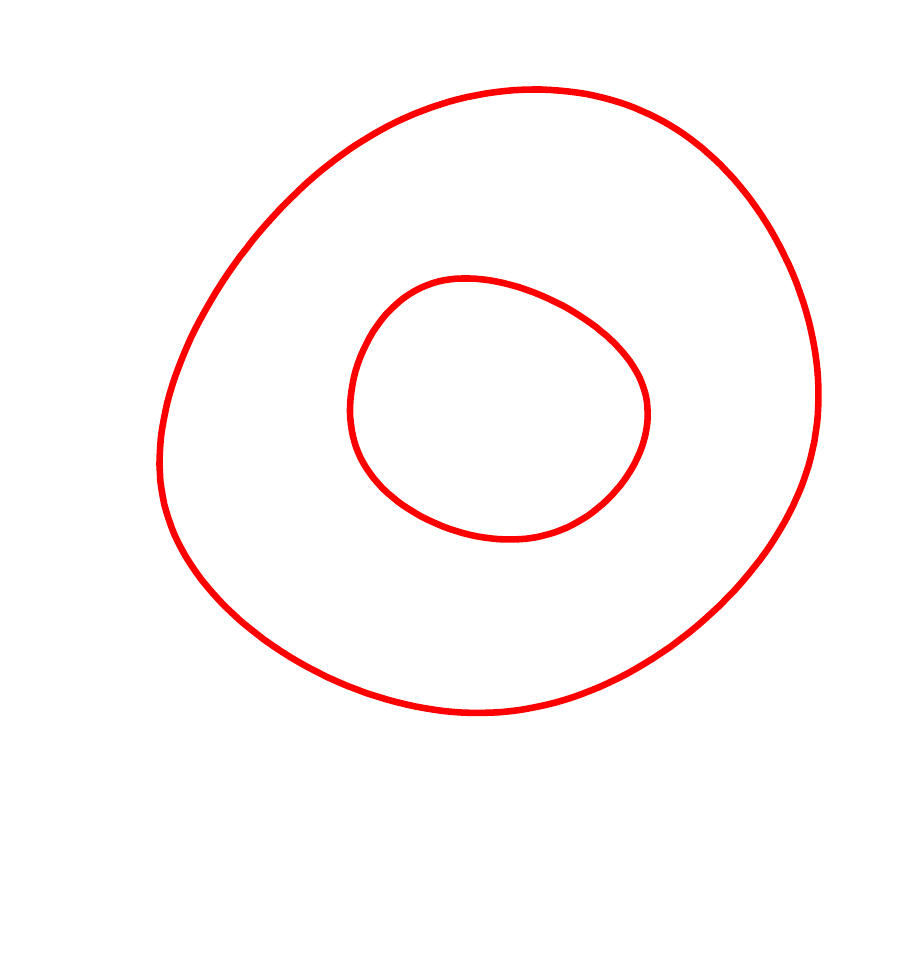}&\hspace{3ex} &
\includegraphics[width=3cm, angle=0]{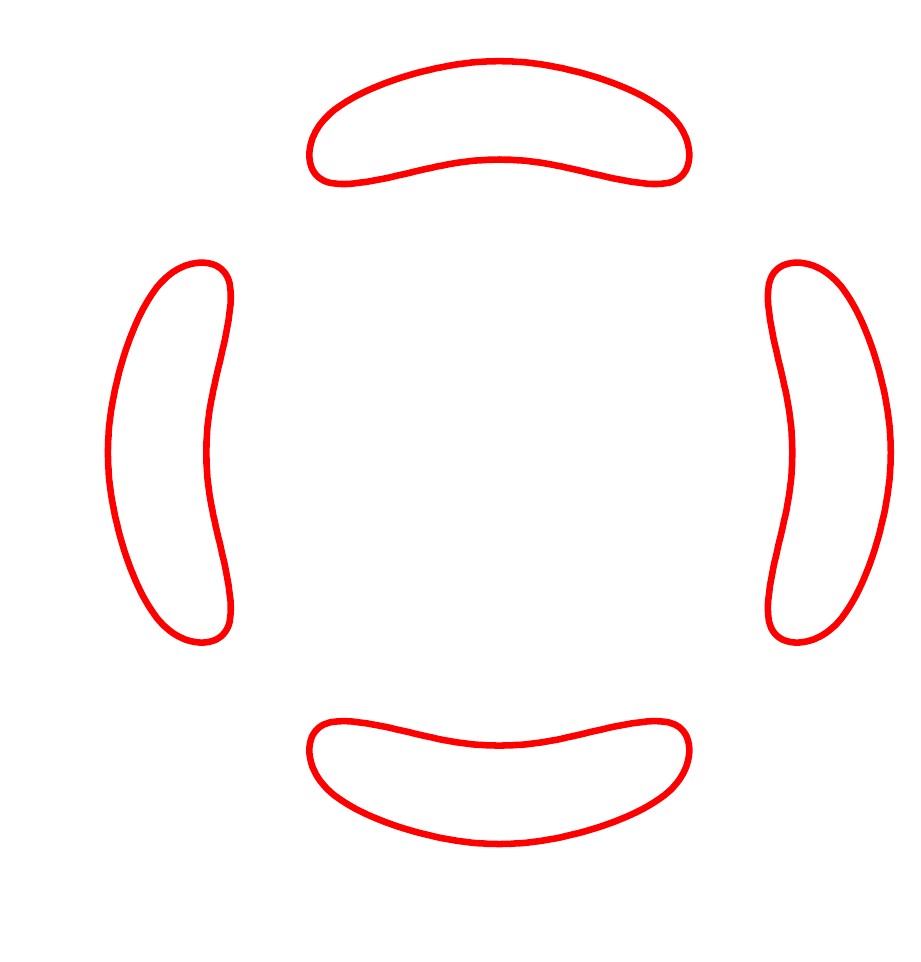}
\\ \\ a) && b) && c) &&d)

\end{tabular}
\end{center}
\caption{Some real algebraic curves of degree 4}
\label{quartic}
\end{figure}

A theorem due to 
 Axel Harnack at the end of the  XIXth century
states that a planar real algebraic curve of degree $d$ has a maximum 
of  $\frac{d(d-1)+2}{2}$ connected components. But how can these 
components be arranged with respect to each other?
We call the relative position of the connected components of a planar real algebraic curve in the plane 
 its \textit{arrangement}. 
In other words, we are not interested in the exact position of the 
curve in the plane, but simply 
in
the configuration that it 
realises. For example, if one curve has two bounded connected components, 
we are only interested in whether one of these components is contained in the other 
(Figure \ref{quartic}c) or not (Figure \ref{quartic}a). 
 At the second International Congress in Mathematics in Paris in 1900, David Hilbert 
 announced his famous list of 23 problems for the  XXth century. The
 first part of his 16th problem can be very widely understood as the following:

\begin{center}
\textit{Given a positive integer $d$, establish a list of possible arrangements 
of real algebraic curves of degree $d$.}
\end{center}

At Hilbert's time, the answer\footnote{A more reasonable
and natural problem is to consider arrangements of connected
components of 
non-singular real algebraic curves in the projective plane rather than  
in $\R^2$. 
In this
more restrictive case, the answer was known up to degree 5 at
Hilbert's time, and is now known up to degree 7. 
 The list in degree 7 is given in a theorem by Oleg Viro and patchworking is an essential tool in its proof.} was known for curves of degree at most
4.
 There have been 
spectacular advances in this problem in the XXth century due mostly in part  to mathematicians
from the Russian school. Despite this,
 there remain numerous open questions...

\subsection{Real and tropical curves}

In general, it is a difficult problem to construct a real algebraic curve
of a fixed degree  and realising a given arrangement. 
For a century, mathematicians have proposed many ingenious methods for
doing this. 
The patchworking method invented by Oleg Viro in the 70's is actually one of 
the most powerful. 
At this time, tropical geometry was not yet in existence, and Viro announced
his theorem in a language  different from what we use here. 
However, he realised by the end
of the 90's that his patchworking could be interpreted as 
a \textit{quantisation} of tropical curves. Patchworking is in fact  
the process of 
reading  Figure \ref{amibe} from right to left instead of left to right. 
Thanks to the  interpretation of patchworking in terms of tropical 
curves, shortly afterwards, Grigory Mikhalkin 
generalised Viro's original method. Here we will present  a simplified 
version of patchworking. The interested reader may find a more 
complete version in the references indicated at the end of the text in Section \ref{ref}.

In what follows, for $a$, $b$ two integer numbers, we denote by
$s_{a,b}:\RR^2\to\RR^2$ the composition of $a$ reflections in 
the $x$-axis with $b$ reflections in 
the $y$-axis. Therefore, the map 
$s_{a, b}$ depends only on  the parity of $a$ and $b$. To be precise, 
$s_{0, 0}$ is the identity, $s_{1, 0}$ is the reflection in the $x$-axis, 
$s_{0, 1}$ reflection in the $y$-axis, and $s_{1, 1}$ is reflection 
in the origin (equivalently,  rotation by 180 degrees).

We will now explain in detail the procedure of patchworking. We 
start with a tropical curve $C$ of degree $d$ which has only 
edges of odd weight,
 and such that the polygons dual to all vertices
are triangles. 
For example, take the tropical line from Figure 
 \ref{patch dte}a. For each edge $e$ of $C$, choose a vector
  $\vec v_e=(\alpha_e,\beta_e)$ in the direction of $e$ such that $\alpha_e$
and $\beta_e$ are
 relatively prime integers (note that we may
  choose either
   $\vec v_e$ or $-\vec v_e$, but this does not matter
  in what follows).
For the tropical line, we may take 
the vectors  $(1,0)$, 
$(0,1)$ and $(1,1)$. 
Now let us think
of  the plane $\R^2$ where 
our tropical curve 
sits as being 
the positive quadrant $(\RR^*_+)^2$ of $\RR^2$, 
and take the union of the curve with its 3 symmetric copies obtained by 
 reflections in the axes.  For the tropical line, we obtain  Figure \ref{patch 
  dte}b.
  For each edge $e$ of our curve, we will erase two 
  out of the four symmetric copies of $e$, denoted  $e^{\prime}$ and $e^{\prime \prime}$. 
Our choice must satisfy  the following rules:

\begin{itemize}
 \item $e'=s_{\alpha_e,\beta_e}(e'')$,
\item for each vertex $v$ of $C$ adjacent 
to the edges $e_1$, $e_2$
  and $e_3$ and for each pair
  $(\varepsilon_1,\varepsilon_2)$ in  $\{0,1\}^2$, 
exactly one or three of
the copies 
of
  $s_{\varepsilon_1,\varepsilon_2}(e_1)$,
  $s_{\varepsilon_1,\varepsilon_2}(e_2)$, and
  $s_{\varepsilon_1,\varepsilon_2}(e_3)$ are erased. 
\end{itemize}

\vspace{1ex}
We call the result after erasing the edges a
\textit{real tropical curve}. For example, if $C$ is a tropical line, 
using the rules above, it is possible to erase 6 of the edges 
from the symmetric copies of $C$ to obtain the real tropical line 
represented in Figure \ref{patch
  dte}c.
  It is true that this real tropical curve
   is not a regular line in $\R^2$, but 
 they are arranged in the plane in  same way 
  (see Figure \ref{patch dte}d)!

This is not just a coincidence, it is in fact a theorem.

\begin{figure}[h]
\begin{center}
\begin{tabular}{ccccccc}
\includegraphics[width=3cm,  angle=0]{Figures/Droite.pdf}&\hspace{3ex} &
\includegraphics[width=3cm, angle=0]{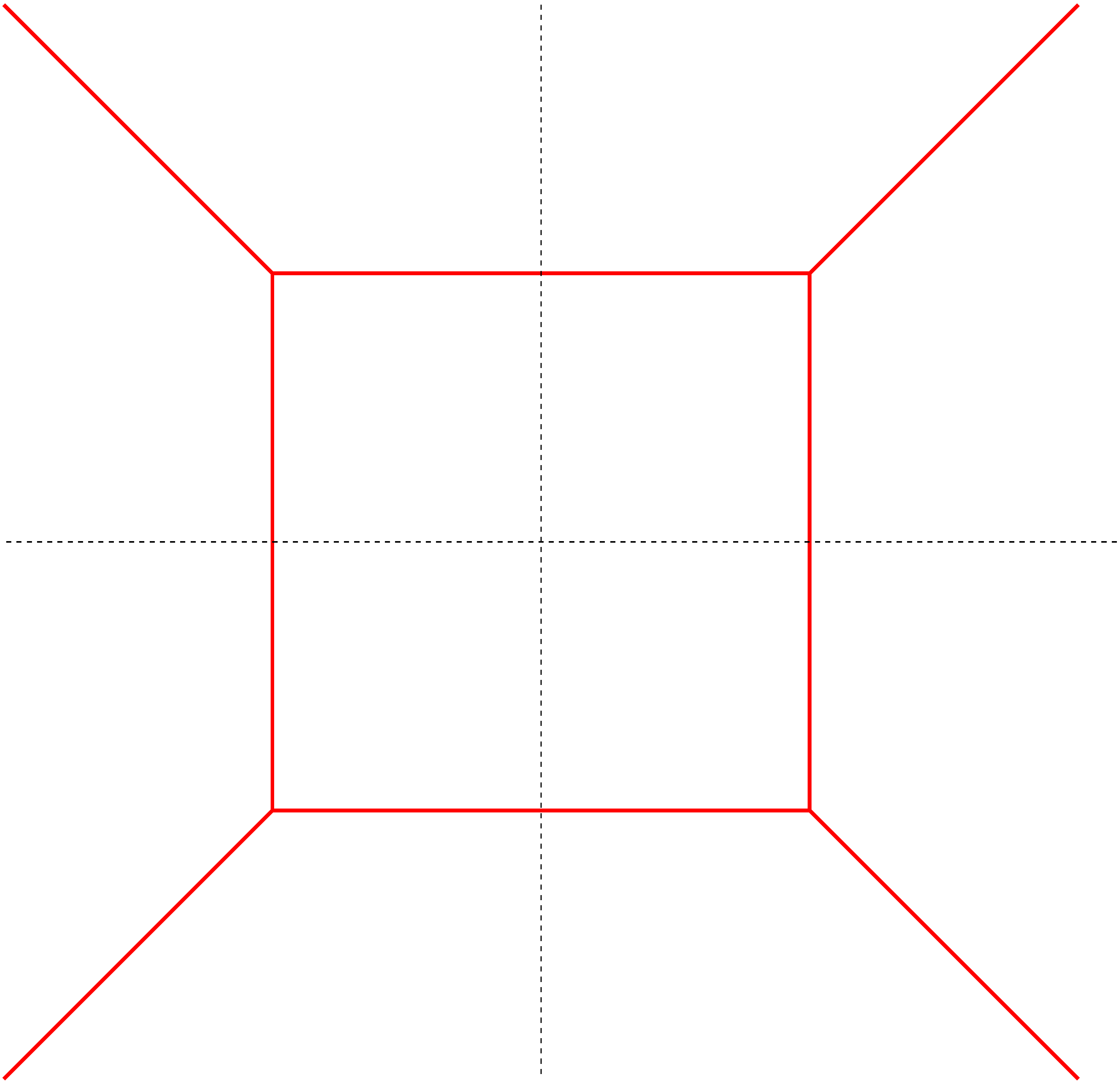}&\hspace{3ex} &
\includegraphics[width=3cm, angle=0]{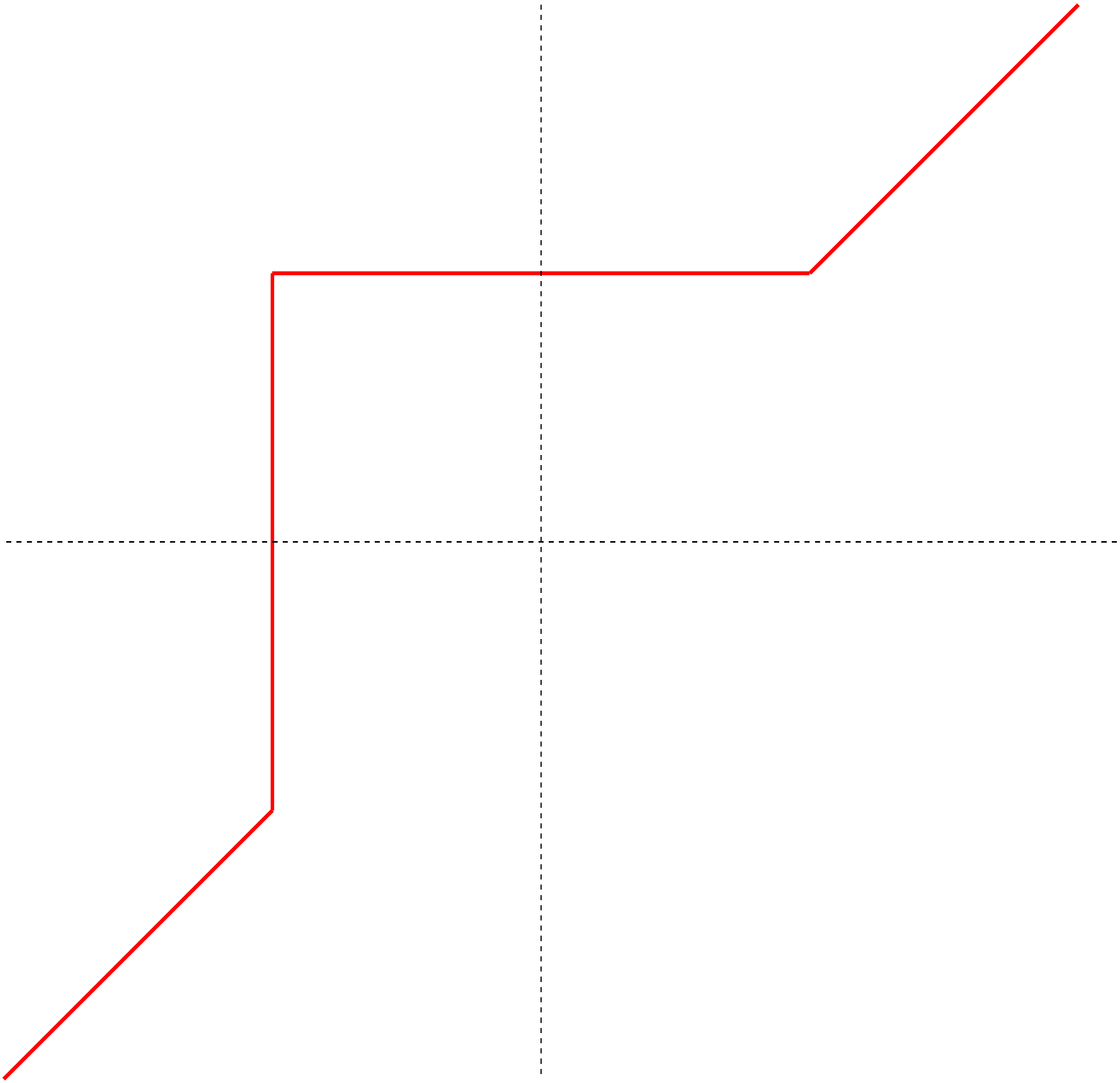}&\hspace{3ex} &
\includegraphics[width=3cm, angle=0]{Figures/PatchDte4.pdf}
\\ \\ a) && b) && c) &&d)

\end{tabular}
\end{center}
\caption{Patchworking of a line}
\label{patch dte}
\end{figure}

\begin{thm}[O. Viro]\label{viro}
Given 
any real tropical curve of degree $d$, there exists
a real algebraic  
curve of degree $d$ with the same arrangement. 
\end{thm}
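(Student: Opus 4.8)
The plan is to realise the desired real algebraic curve as a member, for large parameter $t$, of a one-parameter family of real polynomials that degenerates onto the given combinatorial data via the logarithm with base $t$ --- in other words, to run the dequantisation of Section \ref{expl} backwards. Let $C$ be the given tropical curve of degree $d$, defined by $P(x,y)=\tg \sum_{i,j} a_{i,j}x^iy^j\td$, whose dual subdivision is a triangulation $\tau$ of $\Delta_d$ (this is the hypothesis that every dual polygon is a triangle). The choice of erased copies encodes, at each lattice point $(i,j)\in\Delta_d\cap\Z^2$, a sign $\sigma_{i,j}\in\{+1,-1\}$, and the two erasure rules amount exactly to the statement that these signs are globally consistent. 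From these data I would form, for each real $t>1$, the genuine real polynomial
$$P_t(x,y)=\sum_{i,j}\sigma_{i,j}\, t^{a_{i,j}}\, x^iy^j,$$
and let $\A_t\subset\R^2$ be its zero set. The theorem follows once I show that for $t$ large enough $\A_t$ is a real curve of degree $d$ whose arrangement agrees with the real tropical curve.

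First I would analyse $\A_t$ in logarithmic coordinates, just as in the dequantisation of a line. Restricting to the quadrant where $\mathrm{sgn}(x)=\varepsilon_1$ and $\mathrm{sgn}(y)=\varepsilon_2$, and setting $X=\log_t|x|$, $Y=\log_t|y|$, the monomial $\sigma_{i,j}t^{a_{i,j}}x^iy^j$ has absolute value $t^{a_{i,j}+iX+jY}$ and sign $\sigma_{i,j}\varepsilon_1^i\varepsilon_2^j$. The same inequality bounding $\log_t$ of a sum between the maximum of the terms and that maximum plus $\log_t$ of the number of terms, used for Maslov dequantisation, shows that as $t\to\infty$ a single monomial dominates $|P_t|$ away from $C$, so $\A_t$ can meet only a shrinking neighbourhood of $C$, whereas $P_t$ can vanish only where at least two monomials attain comparable size, namely over $C$. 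Thus the image of $\A_t$ under $\Log_t$ converges in the Hausdorff sense to $C$.

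The heart of the argument is to upgrade this set-theoretic convergence into a statement about the real arrangement, and this is the step I expect to be the main obstacle. Over the interior of an edge $e$ of $C$ dual to the segment joining $(i,j)$ and $(k,l)$, only these two monomials dominate, so $\A_t$ is nonempty in the quadrant $(\varepsilon_1,\varepsilon_2)$ precisely when the two dominant signs are opposite, that is when $\sigma_{i,j}\varepsilon_1^i\varepsilon_2^j=-\sigma_{k,l}\varepsilon_1^k\varepsilon_2^l$. I would check that this sign criterion selects exactly the copies of $e$ that the patchworking rules retain: the relation $e'=s_{\alpha_e,\beta_e}(e'')$ is matched by the unique symmetry flipping $\varepsilon_1^i\varepsilon_2^j$ relative to $\varepsilon_1^k\varepsilon_2^l$, and it is the oddness of the weights that makes this reflection act nontrivially on the sign difference, forcing the parity bookkeeping to close up rather than obstruct. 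Near each vertex $v$ of $C$ the local picture of $\A_t$ is modelled on the real zero locus of the trinomial $\sum_{(i,j)\in T_v}\sigma_{i,j}x^iy^j$ attached to the dual triangle $T_v$; when $T_v$ is primitive an integer linear change of variables straightens it to the standard triangle, reducing the model to a line, and the three vertex signs dictate which of the four quadrants the local branch enters.

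Finally I would glue these local models. A transversality or implicit function argument, uniform for $t\gg 0$, ensures that the edge pieces and vertex pieces meet exactly as the real tropical curve prescribes and that no spurious components appear in the transition regions between triangles, so that $\A_t$ is isotopic to the real tropical curve; after compactifying to account for the ends at infinity, $\A_t$ is a real algebraic curve of degree $d$ realising the same arrangement. The genuine content of Viro's theorem lies in this uniform control as $t\to\infty$ together with the verification that the sign combinatorics reproduce the erasure rules precisely; by contrast, the domination estimates are just the dequantisation bounds already met in Section \ref{expl}.
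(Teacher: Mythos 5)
Your proposal follows essentially the same route as the paper, which does not prove Theorem \ref{viro} in full but only sketches it in Section \ref{sec:amoeba}: one forms the one-parameter family of real polynomials $P_t(x,y)=\sum_{i,j}\sigma_{i,j}t^{a_{i,j}}x^iy^j$ --- exactly the special case of Theorem \ref{thm:approx} with $I_{i,j}=\{a_{i,j}\}$ and real coefficients $\beta_{i,j,r}=\sigma_{i,j}$ --- and reads off the real arrangement for $t\gg 0$ by reversing the dequantisation of Section \ref{expl}. Your quadrant-by-quadrant sign analysis along edges, the trinomial model at primitive dual triangles, and the uniform gluing step are precisely the ``other details'' the paper alludes to without carrying out, so the two arguments agree in substance, with yours supplying more of the combinatorial bookkeeping.
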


We should take a second to realise the depth and elegance of the above statement. 
A real tropical curve is constructed by following the rules of a  purely combinatorial 
game. It seems like magic to assert that there is a relationship between these 
combinatorial objects and actual real algebraic curves!
We will not get into details here, but Viro's method even allows us to determine
the 
equation of 
a
real algebraic curve 
with the same arrangement than
a given
real tropical curve.

Of course, much skill and intuition are still required to construct real algebraic curves with complicated arrangements and  both of these are acquired with 
practice. Therefore, we invite the reader to try the exercises at the end of this section. 
 Nevertheless, patchworking
 provides a much more flexible technique to construct arrangements than dealing directly with polynomials, 
in particular it requires a priori no knowledge in algebraic geometry.
To illustrate 
the use of
Viro's patchworking,
let us now
construct the arrangements of 
two real algebraic curves, one of degree 3 and the other of degree 6.

First of all, consider the tropical curve of degree 3 shown in Figure
 \ref{Cub}a. 
For  suitable choices of edges to erase, the Figures 
 \ref{Cub}b and c represent the 
two
stages of the patchworking procedure.  
With this we have proved the existence of a real algebraic curve of degree 3 
resembling the configuration of 
 Figure  \ref{Cub}d.

\begin{figure}[h]
\begin{center}
\begin{tabular}{ccccccc}
\includegraphics[width=3cm, angle=0]{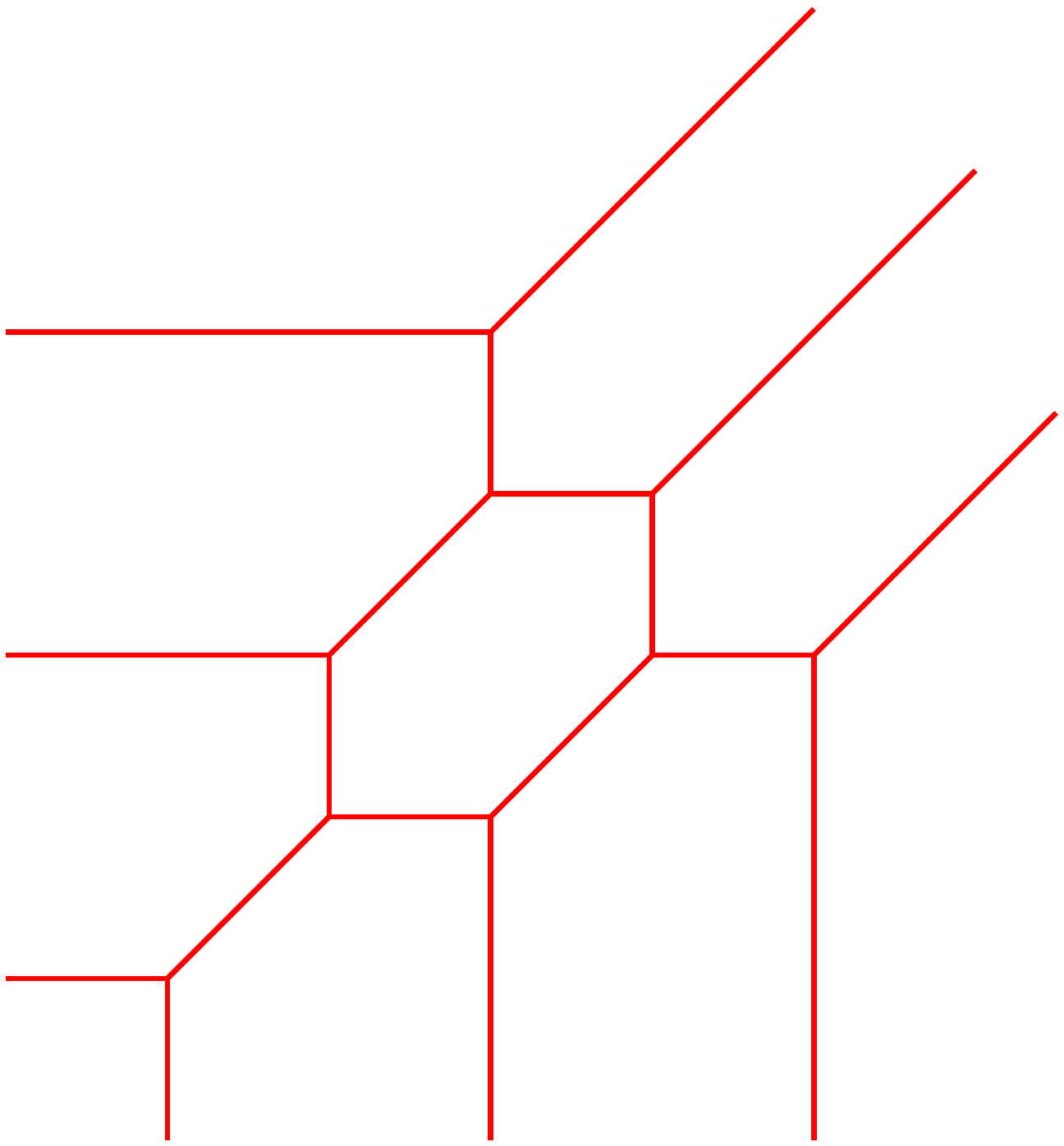}&\hspace{3ex} &
\includegraphics[width=3cm, angle=0]{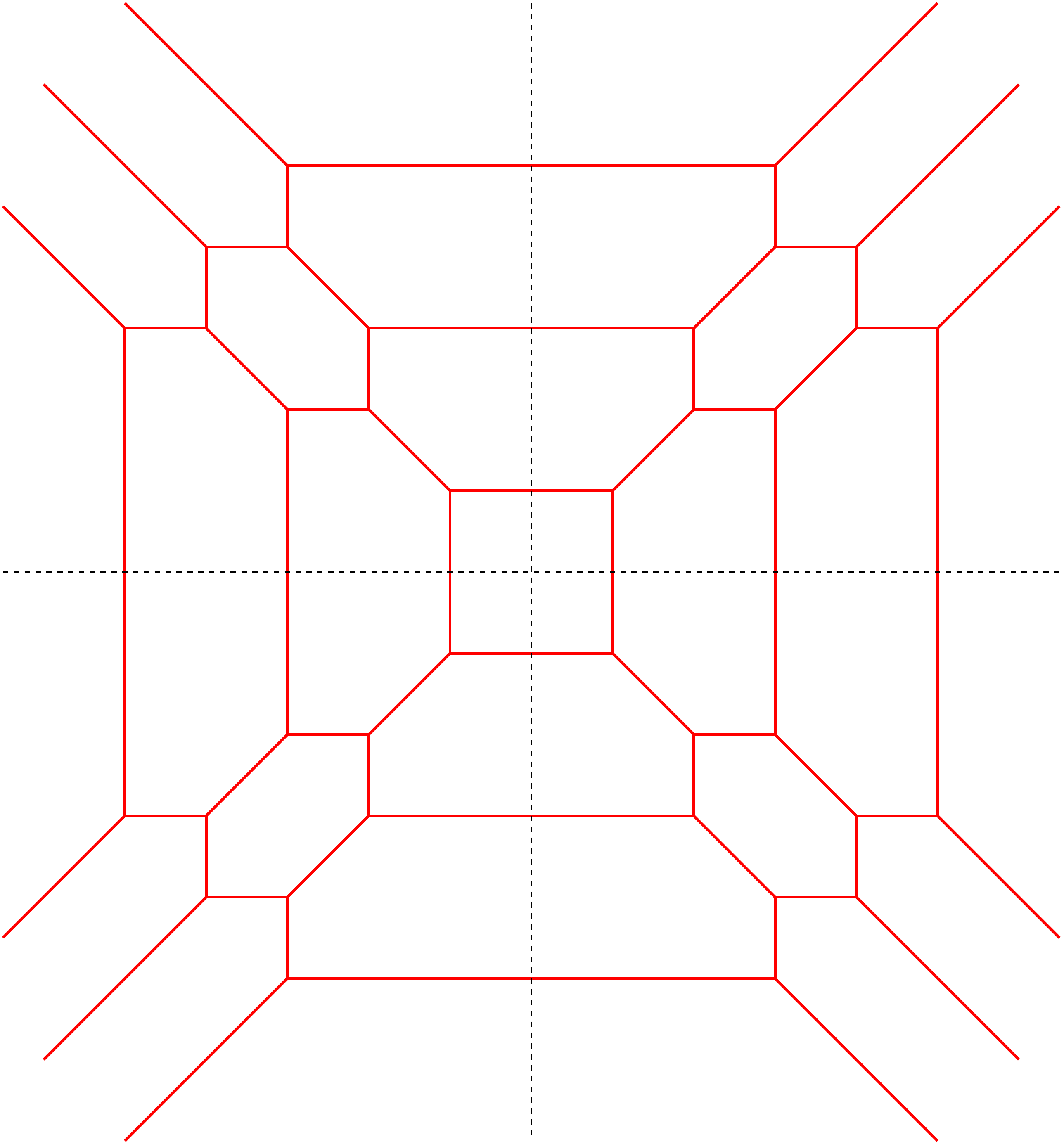}&\hspace{3ex} &
\includegraphics[width=3cm, angle=0]{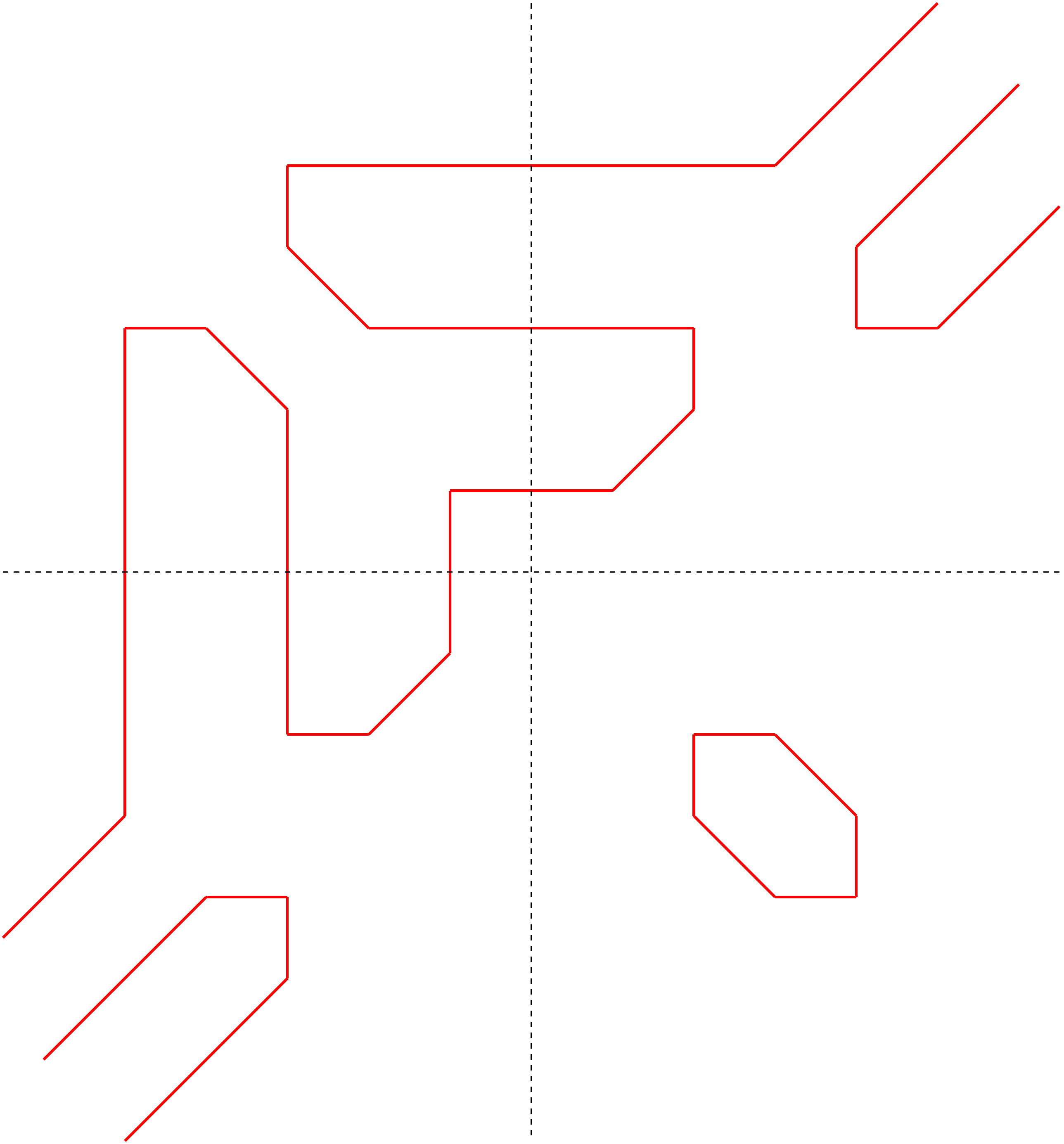}&\hspace{3ex} &
\includegraphics[width=3cm, angle=0]{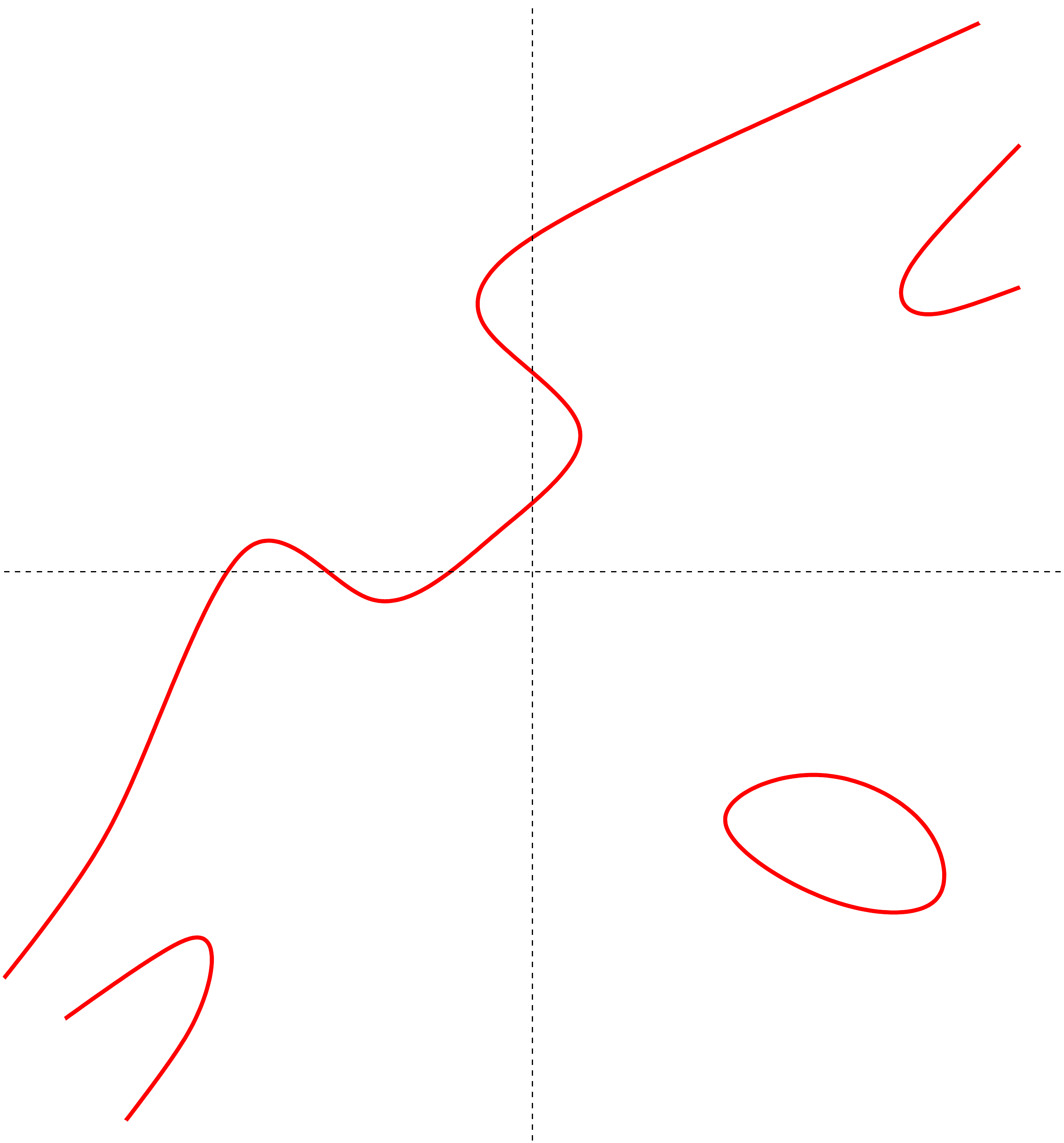}
\\ \\ a) && b) && c) &&d)
\end{tabular}
\end{center}
\caption{Patchworking of a cubic}
\label{Cub}
\end{figure}

To finish, consider the tropical curve of degree 6 drawn in Figure 
 \ref{Gud}a.
Once again, for a suitable choice of edges to erase, the patchworking 
procedure gives the curve in Figure \ref{Gud}c.
A real algebraic curve of degree 6 which realises the same arrangement
as the real tropical curve was first constructed using much more complicated 
techniques in the 60's by Gudkov.
An interesting piece of trivia: Hilbert announced in 1900 that such a curve could not exist!

\subsection{Amoebas}\label{sec:amoeba}

Even though dequantisation of a line is the main idea underlying patchworking 
in its full generality, the proof of Viro's theorem is quite a bit more technical to explain
rigorously. Here we will give a sketch of what is involved in the proof.  

\begin{figure}[h]
\begin{center}
\begin{tabular}{ccc}
\includegraphics[width=5cm, angle=0]{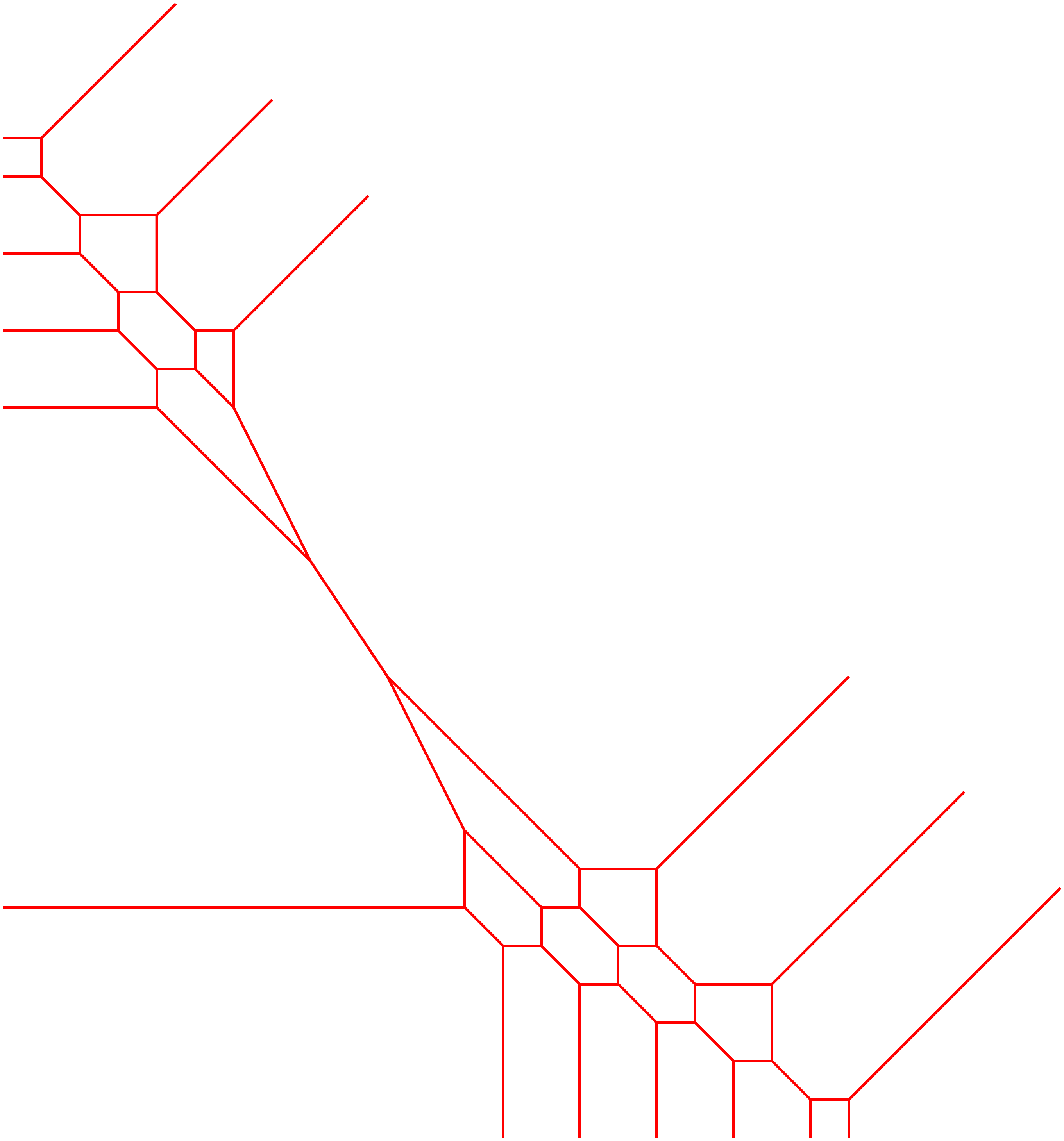}&
\includegraphics[width=5cm, angle=0]{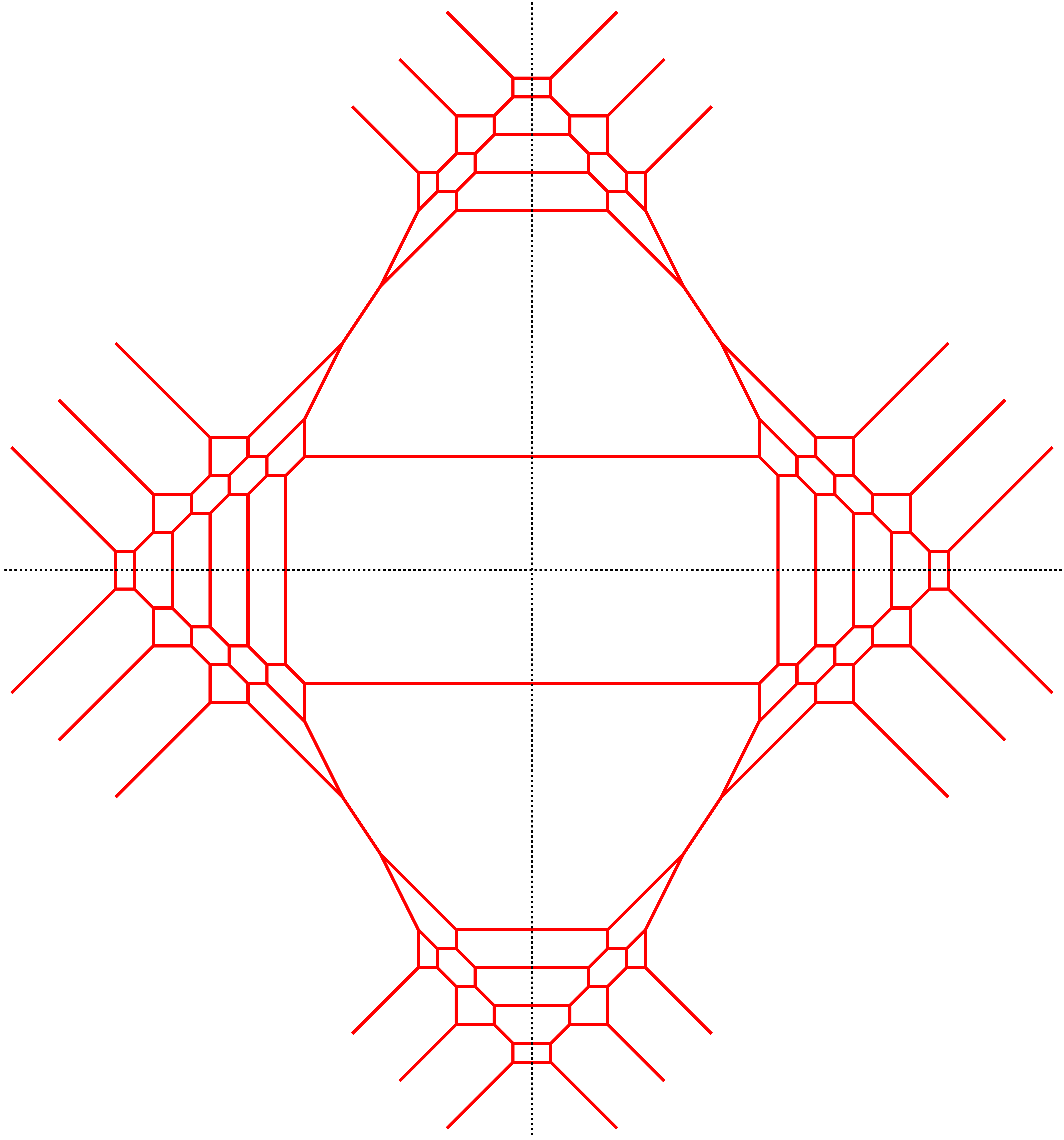}&
\includegraphics[width=5cm, angle=0]{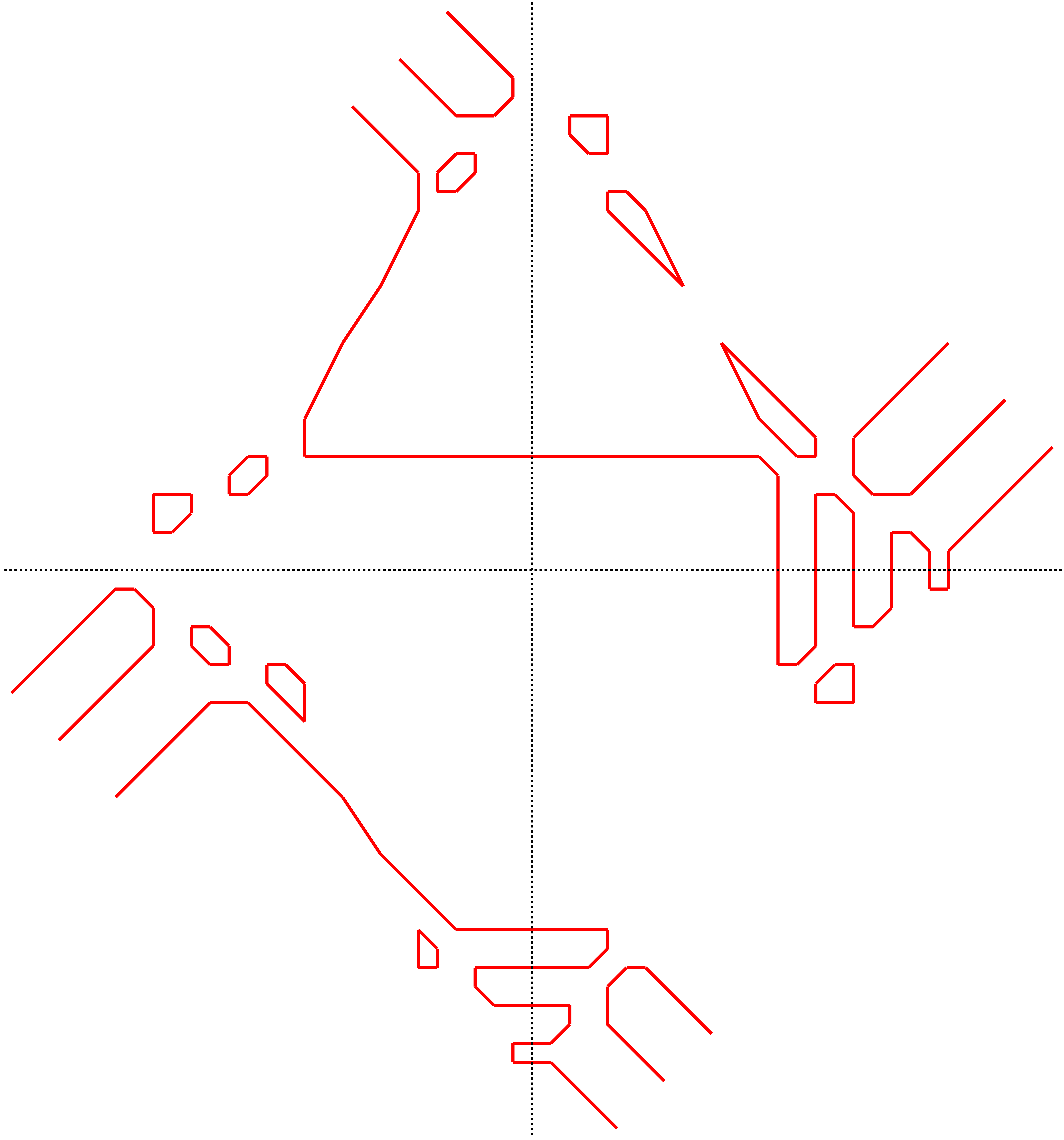}
\\ \\ a) & b) & c)

\end{tabular}
\end{center}
\caption{Gudkov's curve}
\label{Gud}
\end{figure}

First of all, since the field $\R$ is not algebraically closed, we will not work
with real algebraic curves but rather \textit{complex algebraic curves}. In other words, 
subsets of the space 
$(\CC^*)^2$
 defined by equations of the form   $P(x,y)=0$, 
where $\CC^*=\CC\setminus\{0\}$, and 
$P(x,y)$ is a polynomial with complex coefficients (which can therefore
be real). 
For $t$ a positive real number, we define the map
$\text{Log}_t$ on $(\CC^*)^2$ by: 
$$\begin{array}{cccc}
\text{Log}_t & (\CC^*)^2& \longrightarrow & \RR^2
\\ & (x,y) &\longmapsto  & (\log_t |x|,\log_t |y|).
\end{array} $$

We denote the image of a curve given by the equation $P(x,y)=0$   under the map
$\text{Log}_t$ by $\A_t(P)$, and call it the 
\textit{amoeba of base 
  $t$} of the curve. 
  Exactly as in Section \ref{deqdte}, the limit of $\A_t(P)$ when $t$ goes to $+\infty$ 
  is a tropical curve having a single vertex,  with infinite rays corresponding to asymptotic directions of the amoebas $\A_t(P)$.
  However, one can obtain more interesting limiting objects by considering \emph{families} of algebraic curves. 
  That is to say, for each $t$, not only the base of the logarithm changes, but also the \emph{curve whose amoeba we  are considering}.
  Such a family of complex algebraic curves
  takes the form of a polynomial $P_t(x,y)$ whose coefficients are given as functions
  of the real number $t$. Therefore, for each choice of $t>1$ we obtain an 
  honest polynomial in $x$ and $y$ which defines a curve in the plane.   
  
  The following theorem says that all tropical curves arise as the limit of amoebas of 
  families of 
  complex algebraic 
  curves, and  
  provides a fundamental link between classical algebraic geometry 
  and tropical geometry.

\begin{thm}[G. Mikhalkin, H. Rullg\aa rd]\label{thm:approx}
Let $P_{\infty}(x,y)=\tg \sum_{i,j}a_{i,j}x^iy^j \td$ be a tropical
polynomial.
 For each coefficient $a_{i,j}$ different from
    $-\infty$, choose a finite set $I_{i,j}\subset\RR$ such that
    $a_{i,j}=\max I_{i,j}$, and choose
$\alpha_{i,j}(t)=\sum_{r\in I_{i,j}} \beta_{i,j,r}t^r$ with $
    \beta_{i,j,r}\ne 0$.
For every $t>0$, we define 
the complex polynomial
 $P_t(x,y)=\sum_{i,j}\alpha_{i,j}(t)x^iy^j$. Then the amoeba 
 $\A_t(P_t)$ converges to the tropical curve defined by 
$P_\infty(x,y)$ when $t$ tends to $+\infty$. 
\end{thm}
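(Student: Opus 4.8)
The plan is to establish convergence in the Hausdorff sense on every compact subset of $\RR^2$, by comparing, for a point $(x,y)\in(\CC^*)^2$ with $\text{Log}_t(x,y)=(u,v)$, the moduli of the various monomials of $P_t$. Writing $x=t^ue^{i\theta_1}$ and $y=t^ve^{i\theta_2}$, the modulus of the $(i,j)$-th monomial is $|\alpha_{i,j}(t)|\,t^{iu+jv}$, so that
$$\log_t\big|\alpha_{i,j}(t)\,x^iy^j\big|=c_{i,j}(t)+iu+jv,\qquad c_{i,j}(t):=\log_t|\alpha_{i,j}(t)|.$$
Since $a_{i,j}=\max I_{i,j}$ and $\beta_{i,j,a_{i,j}}\neq 0$, the term of highest exponent dominates $\alpha_{i,j}(t)$ as $t\to+\infty$, giving $|\alpha_{i,j}(t)|=|\beta_{i,j,a_{i,j}}|\,t^{a_{i,j}}(1+o(1))$ and hence $c_{i,j}(t)\to a_{i,j}$. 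Thus the exponents $c_{i,j}(t)+iu+jv$ are uniform-on-compacta perturbations of the tropical monomials $a_{i,j}+iu+jv$ of $P_\infty$. First I would record this computation and fix the notation.

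\textbf{Upper bound (the amoeba is trapped near $C$).} Let $K\subset\RR^2$ be compact and $\delta>0$. If $(u,v)\in K$ lies at distance at least $\delta$ from the tropical curve $C$, then $\max_{i,j}(a_{i,j}+iu+jv)$ is attained by a single pair $(i_0,j_0)$, with a gap bounded below by some $\eta=\eta(\delta,K)>0$ over all other pairs. For $t$ large the perturbations $c_{i,j}(t)-a_{i,j}$ are all smaller than $\eta/2$, so the monomial indexed by $(i_0,j_0)$ has strictly larger modulus than the sum of all the others; consequently $P_t(x,y)\neq 0$ for every $(x,y)$ with $\text{Log}_t(x,y)=(u,v)$. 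Hence $\A_t(P_t)\cap K$ is contained in the $\delta$-neighbourhood of $C$ for all large $t$.

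\textbf{Lower bound (every point of $C$ is approximated).} This is the delicate half. Fix $p=(u,v)$ in the relative interior of an edge $e$ of $C$; there exactly two monomials, say those indexed by $(i_1,j_1)$ and $(i_2,j_2)$, realise the maximum, while all other exponents are smaller by a definite amount. After dividing $P_t$ by the common leading monomial, the equation $P_t=0$ takes the shape $\gamma_1(t)e^{i\psi_1}+\gamma_2(t)e^{i\psi_2}+o(1)=0$, where $\gamma_1(t)/\gamma_2(t)\to 1$ (the two tropical exponents agree along $e$) and the phase difference $\psi_1-\psi_2$ is a non-constant affine function of $(\theta_1,\theta_2)$, so it can be set to $\pi$. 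Choosing the phases so that the two leading terms nearly cancel and applying Rouch\'e's theorem (or the implicit function theorem) to absorb the $o(1)$ remainder, I would produce a genuine zero of $P_t$ whose $\text{Log}_t$-image tends to $p$. Vertices of $C$ are then reached either as limits of such edge points, using that $\A_t(P_t)$ is closed, or by the same scheme applied to the three or more monomials codominant at a vertex.

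\textbf{Main obstacle.} The crux is the vertex case of the lower bound: one must solve, in the two phase variables $(\theta_1,\theta_2)$, the equation obtained by setting the sum of the three or more codominant monomials equal to zero, and then show the solution survives the lower-order perturbation. I expect to handle existence by a winding-number/degree argument, showing that the leading part defines a map from the phase torus covering a neighbourhood of $0\in\CC$, rather than by any explicit formula. Once both inclusions are in hand, Hausdorff convergence of $\A_t(P_t)$ to $C$ on each compact set follows by combining the upper and lower bounds, which completes the proof.
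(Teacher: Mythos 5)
First, a caveat about the ground truth: the paper does not prove this theorem. It is stated with attribution to Mikhalkin and Rullg\aa rd, the proofs of all such statements are explicitly deferred to the references in Section \ref{ref}, and the only in-text evidence is the worked dequantisation of a single line in Section \ref{deqdte}. So there is no in-paper proof to compare yours against, and I can only judge your outline on its own terms. The setup and the upper bound are correct and complete: $c_{i,j}(t)=\log_t|\alpha_{i,j}(t)|\to a_{i,j}$ because $a_{i,j}=\max I_{i,j}$ and $\beta_{i,j,a_{i,j}}\neq 0$; away from $C$ a single tropical monomial wins by a gap $\eta>0$ uniform on the compact set, the perturbations eventually fall below $\eta/2$, and the triangle inequality forces $P_t\neq 0$ on the corresponding torus fibres. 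That is the standard argument for that half.

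The lower bound, which you rightly flag as the crux, has two genuine problems as written. (a) Your claim that \emph{exactly two} monomials are codominant at an interior point of an edge is false in general: the paper itself points out that the set $M_e$ can have any cardinality between $2$ and $w_e+1$, e.g. $M_e=\{(0,0),(0,1),(0,2)\}$ on the horizontal edge of the conic $\tg 0+x+y+y^2+(-1)x^2\td$. The correct statement is only that the codominant exponents are collinear (they span the dual edge $\delta_e$). (b) More seriously, ``choosing the phases so that the two leading terms nearly cancel'' and then invoking Rouch\'e is not a proof: near-cancellation of the leading part at a chosen point is the wrong hypothesis for Rouch\'e, which requires the leading part to actually vanish somewhere on a one-complex-dimensional slice and to dominate the tail on the boundary of that slice. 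The standard repair is to restrict $P_t$ to a holomorphic curve $s\mapsto(x_0s^{p},y_0s^{q})$ with $(p,q)$ a primitive vector along $\delta_e$; the codominant monomials then become a genuine one-variable polynomial in $s^{\,p^2+q^2}$ whose extreme coefficients are both of order $t^{M+o(1)}$, so by the fundamental theorem of algebra it has roots with $\log_t|s|=o(1)$, and Rouch\'e on a small circle around such a root absorbs the strictly smaller monomials. This also handles edges with more than two codominant exponents and, suitably adapted, vertices --- though, as you note, vertices can instead be recovered from edge-interior points by closedness and compactness, so the vertex case is not really the main obstacle; the slice construction for edges is. Until that step is carried out, the proof is an accurate plan rather than a proof.
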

The dequantisation of the curve seen in Section
 \ref{deqdte} is a particular case of the above theorem. The amoeba of base $t$
 of the line with equation $t^0x-t^0y+t^01=0$ converges to the tropical line 
 defined by $\tg
0x+0y+0\td$. 
We can deduce Viro's theorem from the preceding theorem by remarking, 
among other details, that if the 
 $\beta_{i,j,r}$ are real numbers, then the curves defined by the 
polynomials  $P_t(x,y)$  are real algebraic curves.

\subsection{Exercises}
\begin{exo}
\begin{enumerate}
\item Construct a real tropical curve of degree 2 that realises the same arrangement as a 
hyperbola in $\R^2$. Do the same for a parabola. Can we construct a real  tropical curve 
realising the same arrangement as an ellipse?

\item By using patchworking, show that there exists a real algebraic curve of 
degree 4 realising the arrangement shown in Figure 
  \ref{quartic}b. Use the construction illustrated in Figure 
  \ref{Cub} as a guide.
\item Show that for every degree $d$, there is a real algebraic curve
with $\frac{d(d-1)+2}{2}$ connected components. 
\end{enumerate}
\end{exo}

\renewcommand{\L}{{\mathcal L}}
\section{Further directions}\label{sec:further}

So far we only considered tropical polynomials in one
  variable, their tropical roots, and tropical curves in the plane. 
Tropical geometry however extends 
 far beyond these basic objects, and we would like to
showcase briefly some  horizons that fit
with the topics of  the material discussed in this text.

\subsection{Hyperfields}

Recall that we started our tour of tropical geometry by introducing
the tropical semi-field $\T$ which we obtained from the real 
numbers by Maslov's dequantisation procedure. We then proceeded to
consider the solutions to polynomial equations 
  over this semi-field by choosing our
definitions wisely.  
Since the appearance of tropical algebra, there have been several proposals to enrich this  simple tropical semi-field to other algebraic 
structures in order to better understand  relations between tropical algebra and geometry. 
Here we present an example of such an enrichment, suggested by Viro, known as \emph{hyperfields}.

Similarly to a field, a hyperfield is a set
equipped with  two operations called multiplication
and  addition, which satisfy some
axioms. 
The main difference between
  hyperfields and (semi-)fields is that addition is allowed to
 be \textit{multivalued}, this means that the sum
of two numbers is not necessarily just one number as we are used to,
but it can actually be a set  of numbers. 
The simplest hyperfield is the sign hyperfield, 
which consists of only three elements $0$, $+1$
and $-1$. The multiplication is as usual, 
i.e. $(-1) \times (-1) = 1$, but
the multivalued addition is defined by the following
table,
\begin{center}
  \begin{tabular}{  c || c | c | c }
    $+$  & 0 & $+1$ & $- 1$\\ \hline \hline
    0  & \{0\}  & $\{+1\}$ & $\{-1\}$ \\ \hline
    $+ 1$ & $\{+1\}$  & $\{+ 1\}$ & $\{0, \pm1\}$  \\ \hline
      $- 1$ & $\{-1\}$ & $\{0, \pm 1\}$  & $\{-1\}$ \\
  \end{tabular}
\end{center}
In fact, this hyperfield is  quite natural.  Suppose you were to add or multiply two 
real numbers but you only knew their signs, then the 
corresponding operations of the signed hyperfield
give all the possible signs of the output.

There exist several hyperfields related to tropical
  geometry. Here we
will show just one, which resembles our tropical semi-field quite
a bit. It consists of the same underlying
set $\T = \R \cup \{-\infty\}$, and  multiplication is again the usual addition. However the
multivalued addition is now the following:  
$$x \downY y = \begin{cases} \{\max(x, y) \}, & \mbox{if } x \neq y \\ 
\{ z \in \mathbb{T} \ | \ z\leq x\}, & \mbox{if } x = y \end{cases}$$ 
Notice that $-\infty$ can still be considered as the tropical zero,
since $x \downY -\infty=\{x\}$ for any $x\in\TT$.

Let us take a look at how this foreign concept of multivalued addition can simplify some definitions and resolve some peculiarities in tropical geometry. 
In
the tropical hyperfield $\TT$, we 
declare $x_0$ 
to be a  root of a
tropical polynomial $P(x)$ if the set $P(x_0)$ contains the tropical
  zero $-\infty$. Notice that 
both definitions of the roots of a
polynomial coincide whether it is considered over the tropical
hyperfield or the tropical semi-field. 

In  the case of  curves, every classical
line
in the plane is given by the equation
$y= ax + b$. Points 
in $\TT^2$ satisfying the same equation $y =
``ax + b" = \max (a + x, b) $
over   the tropical semi-field   form a piecewise linear graph, as
  depicted in Figure \ref{graphes}a. However this is not at all a
tropical curve, as the balancing condition is not satisfied at
$(b-a,b)$,
 where the 
linearity breaks. However, if we instead replace our semi-field
addition with the new mutlivalued  hyperfield addition, then 
there is one point where the
function $P(x) = (a+x) \downY b$ is truly multivalued: $$P(b-a) = \{ x \in \R \cup
\{-\infty\} \ | \ x \leq b\}.$$ 
Hence, the previous graph grows a vertical tail at $x= b-a$ when the function is considered over the tropical hyperfield, this is the dotted line in Figure \ref{graphes}a. Therefore,   we obtain the tropical line which we first
encountered in Figure \ref{intro}.
  
In conclusion, we see that using the tropical
  hyperfield already simplifies some basic aspects of tropical
  geometry: the definition of the zeroes of polynomials seems more
  natural, and sets in $\RR^2$ defined by a simple equation of the
  form $y=P(x)$ are tropical curves, which was not the case over the tropical semi-field.

\subsection{Tropical modifications}\label{sec:modification}
If we compare
the above example again to the classical situation we may notice
another phenomenon particular to tropical
geometry. 
Classically the shape of a line does not change depending on where it lives. 
For example, a non-vertical line in
$\RR^2$  projects bijectively to the 
$x$-axis; 
more generally any line in
$\RR^n$ 
is in bijection with a coordinate axis via a 
 projection.
This dramatically fails in tropical geometry:
  the projection  to 
  the
 $x$-axis
of a generic tropical line in $\T^2$ no longer gives
  a bijection to $\T$, 
since the vertical edge of  the line is contracted to a
point. 
In general, 
the possible shapes of a  tropical line 
depend on the
space $\T^n$ where it sits.
 We can nevertheless  describe them all:  a generic line
in  $\T^{n}$ has 
the
shape of a line in $\T^{n-1}$ with an additional tail grown at an
interior point. 
We depicted in
Figure \ref{fig:lines} some possible shapes of tropical lines in
$\T^3$ and $\T^5$.
Notice that according to this description, a generic 
tropical line in $\T^n$ has 
$n+1$ ends, and is 
always a tree, meaning a graph with no cycles.\footnote{Alternatively, a tree is a graph in which there is exactly one path joining any two vertices.}

This phenomenon is not  specific to
tropical lines, in general there are infinitely many possible
  ways of modelling an object from classical geometry by a tropical
  object. This presents tropical geometers with some
interesting problems such as:  
What do the different tropical models of the same classical object
have in common? How are two different models of the same object 
related? Does there exists a model better
 than the others? 

\begin{figure}
\begin{center}
\begin{tabular}{ccccccc}
\includegraphics[width=3cm,  angle=0]{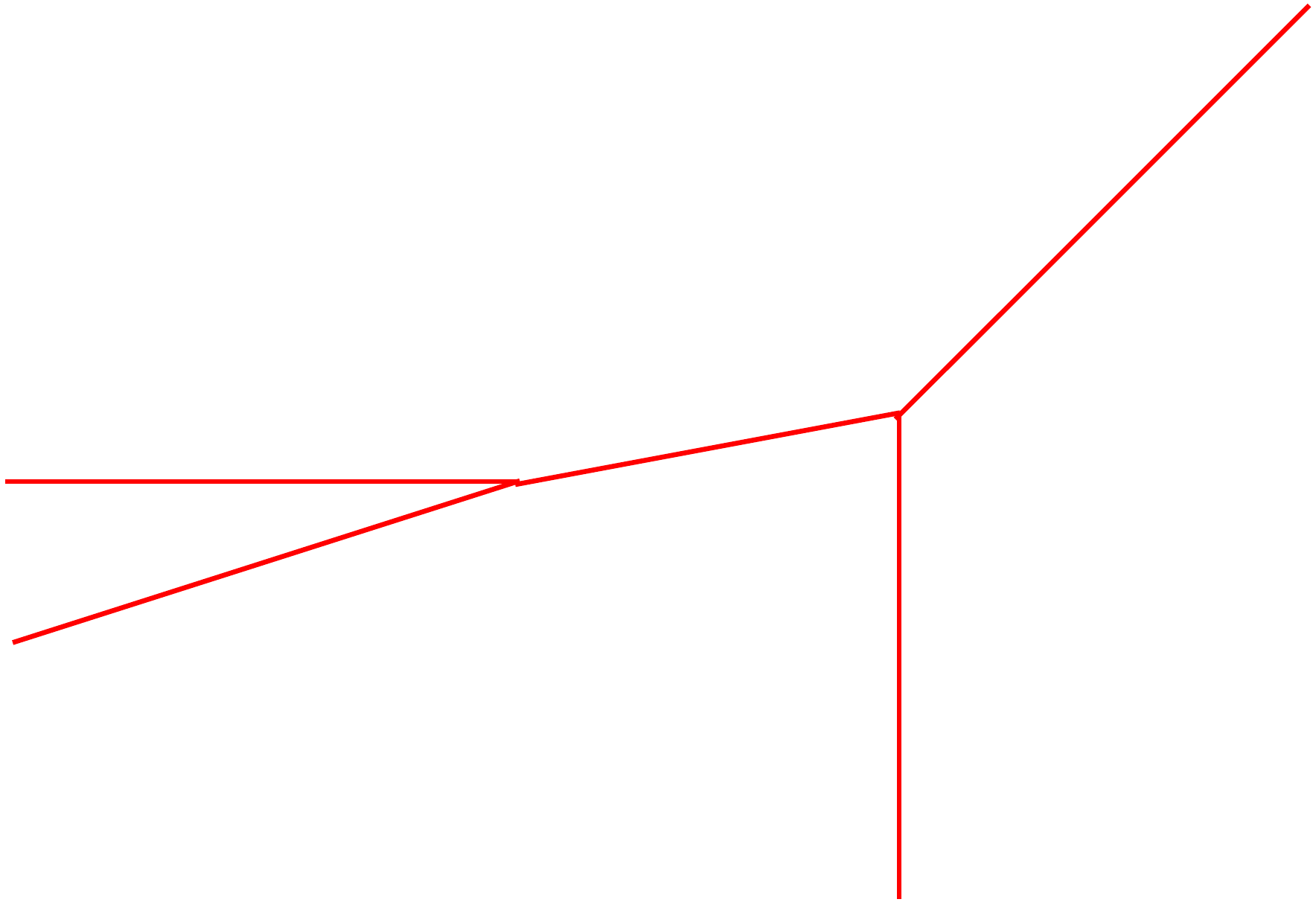} &\hspace{0cm} &
\includegraphics[width=2.5cm,  angle=0]{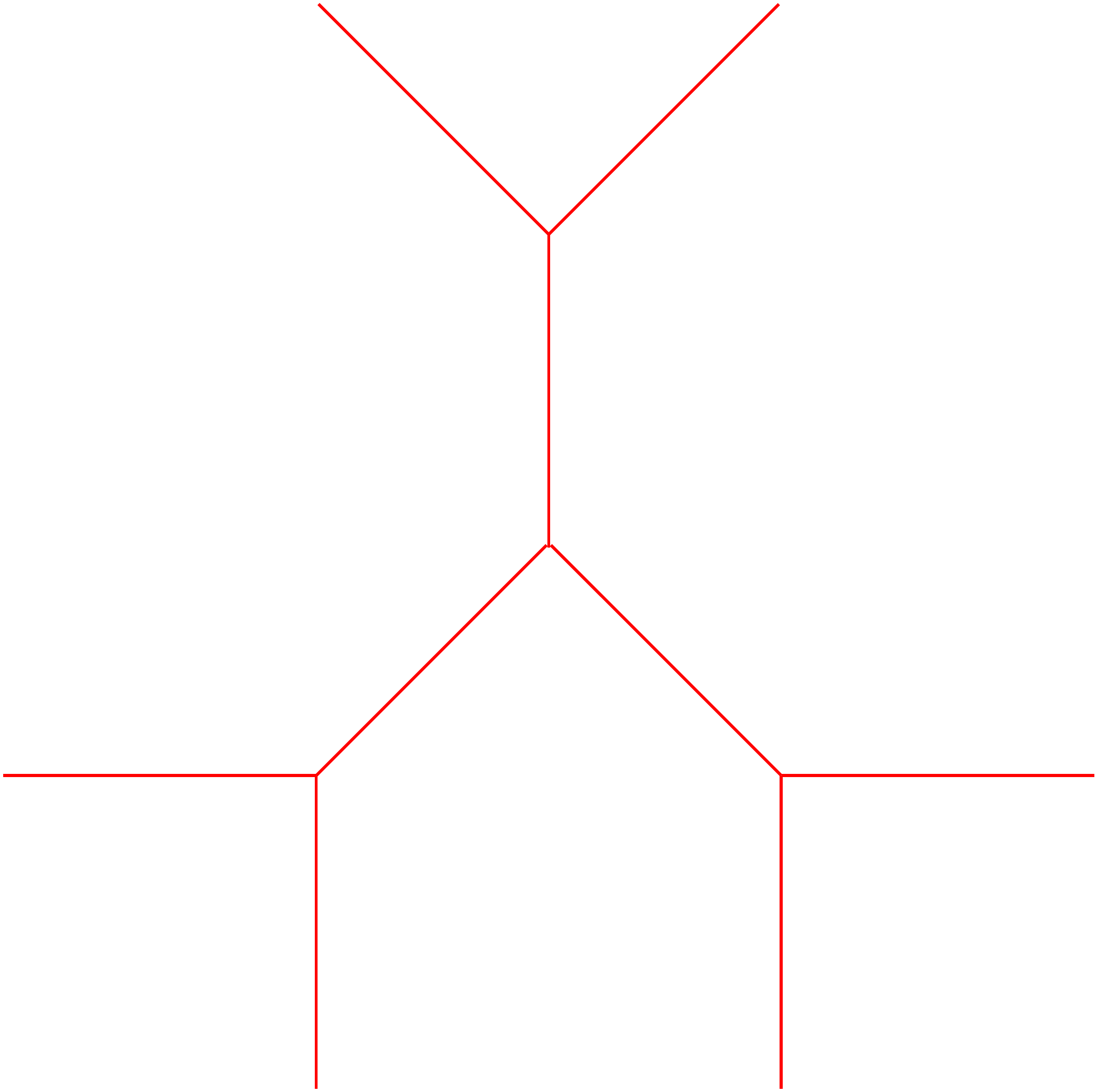}&\hspace{0cm} &
\includegraphics[width=3cm,  angle=0]{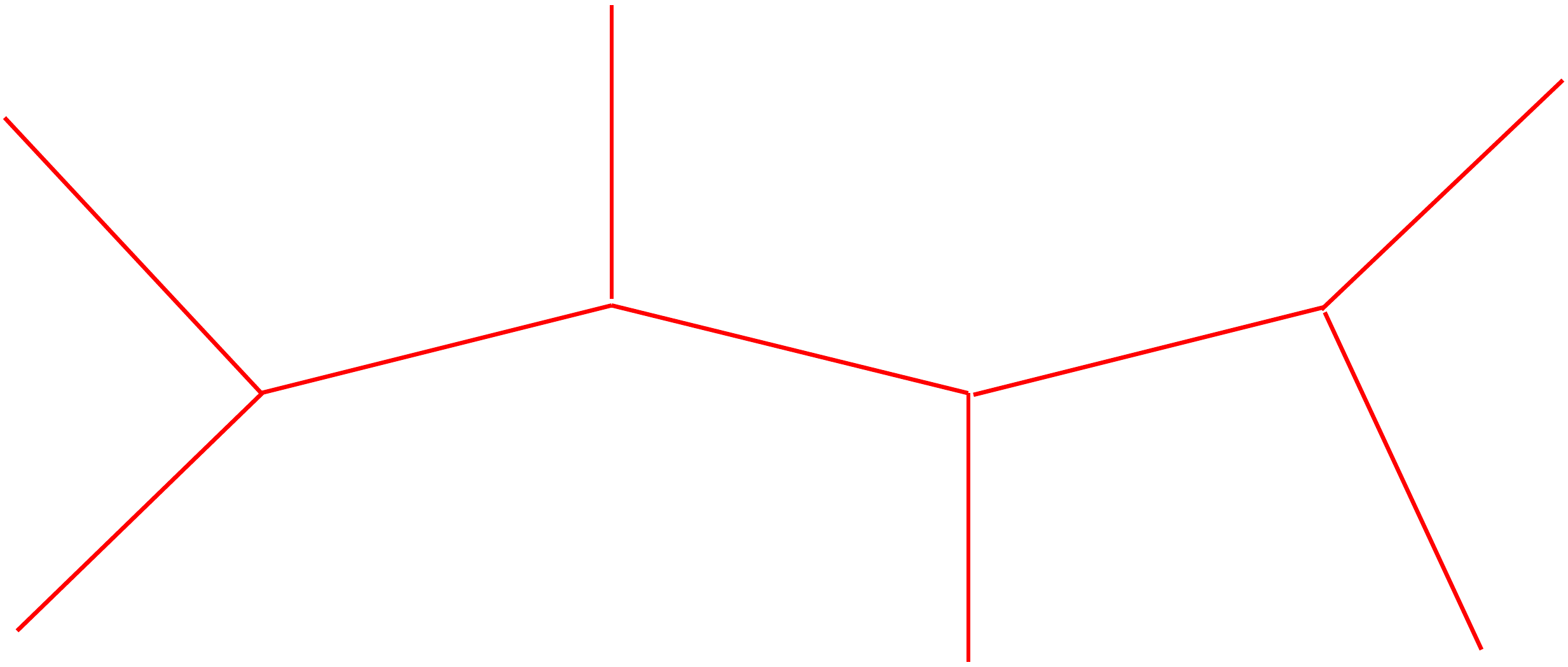}
\\ \\ \small{a) A tropical line in $\T^3$} && \small{b) A ``snowflake" line in $\T^5$} && \small{c) A ``caterpillar" line in $\T^5$}
\end{tabular}
\end{center}
\caption{Different tropical lines}
\label{fig:lines}
\end{figure}

It turns out  that different tropical  models  
are related by an operation known as \emph{tropical modification}.  In fact, the example which we just saw with
a tropical line in $\TT^2$ along with the projection to $\T$  in the
vertical direction is precisely 
a  tropical modification of  $\T$. Let us take a look at another
example in two dimensions.
Consider the tropical  polynomial in two
variables $P(x, y) = x \downY y \downY
0$ over the tropical hyperfield.  
The subset  $\Pi$ of $\T^3$ defined\footnote{Here we use the tropical hyperfield  for consistency. Alternatively, definitions of Sections \ref{sec:algebra}
    and
 \ref{sec:curves} admit natural
 generalisations to polynomials with any number of variables, and
 $\Pi$ is the tropical surface defined
    by the tropical polynomial $\tg 0+x+y+z \td$.} by $z$ $\in$ $P(x, y)
  $ is drawn in Figure 
\ref{Plane}. It is a piecewise linear surface with
 $2$-dimensional  faces; three of them are in
the downwards vertical direction and arise due to $P(x,y)$ being
multivalued. These six faces are glued along any pair of the four rays
starting at $(0,0,0)$ and in the directions $$u_1 = (-1, 0, 0), \qquad u_2 = (0, -1,
0),  \qquad u_3  = (0, 0, -1), \qquad \text{and} \qquad u_0 = (1,
1,1).$$

Just as in Section
\ref{sec:amoeba}, we may consider the limit of  amoebas
$\Log_t(\P)$ of the plane $\P\subset\RR^3$
 defined by the equation $x+y+z+1 = 0$. 
This 
limit is
again
the
 piecewise linear surface $\Pi$, and
therefore this latter   provides another tropical model of a classical
plane\footnote{The same
    phenomenon as  in the case of
    curves happens here: we may consider for simplicity  a plane in $\RR^3$
    because it is defined by a linear equation. However if one
wants to study tropical surfaces of higher degree and their
approximations by amoebas of classical surfaces as in Theorem
\ref{thm:approx}, one has to leave $\RR$ for its algebraic closure $\CC$.}! 
Notice that once again the projection  which forgets the third
coordinate establishes a bijection between $\P$ and $\RR^2$, but is
not a bijection between $\Pi$ and $\T^2$.
This projection map $\Pi \longrightarrow \T^2$  is  a
tropical modification of the tropical plane $\TT^2$.

\begin{figure}
\begin{center}
\includegraphics[scale=1.3]{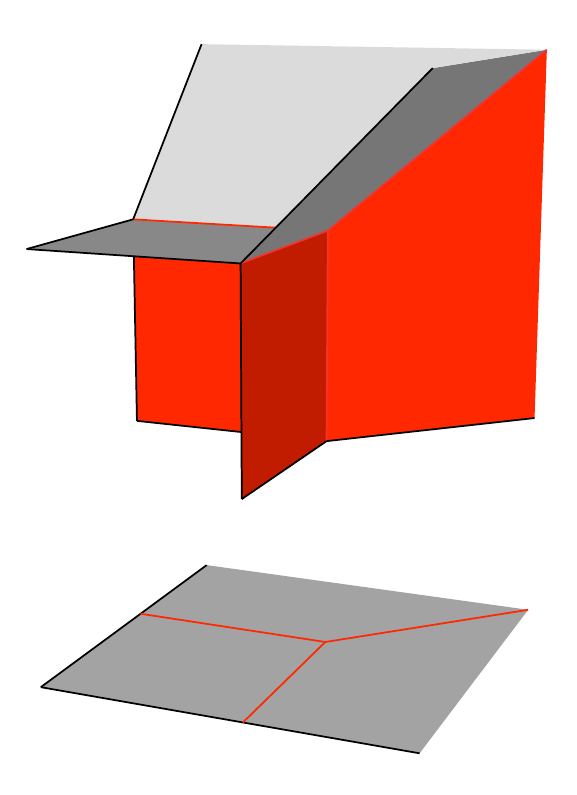}
\put(-150,45){0}
\put(-110, 70){$y$}
\put(-80, 35){$x$}
\put(-10, 160){$\Pi$}
\put(-30, 35){$\T^2$}
\put(-200, 220){\tiny{$(0,-t,0)$}}
\put(-10, 280){\tiny{$(t,t,t)$}}
\put(-160, 190){\tiny{$(-t,0,0)$}}
\put(-90, 160){\tiny{$(0,0, -t)$}}
\end{center}
\vspace{0.5cm}
\caption{A modification $\Pi$ of  the tropical affine plane $\T^2$}
\label{Plane}
\end{figure}

\medskip
You might ask why one would want to use this more complicated model
$\Pi$ of a plane rather than just $\T^2$. Let us show an example.
Consider the real  line, 
$$\L = \{(x, y) \in \RR^2
  \ | \ x+y+1=0 \} \subset \RR^2,$$  
and let
$L\subset\T^2$ be the corresponding tropical line, i.e.
$L=\lim_{t \to +\infty} \Log_t(\L)$. Let $\L'_t$ be a family
  of lines  in $\RR^2$ whose amoebas converge to some tropical line
  $L'\subset\T^2$ as in 
Theorem \ref{thm:approx}, and define $p_t=\L\cap\L'_t$. 
Now let us ask the
following question: can one
determine
$\lim_{t \to +\infty} \Log_t(p_t)$ by only looking at the tropical
picture?

If the set-theoretic intersection of $L$ and $L'$ consist of a single
point $p$, then $\Log_t(p_t)$ has no choice but to converge to
$p$. But what happens when this set-theoretic intersection is infinite?

Suppose that $L'$ is given by the tropical polynomial 
 $``1\cdot x+y+0"$,
 so that the position
 of $L$ and $L'$ is as in
Figure \ref{inter st}a with $L$ in blue and $L'$ in red.  Then the
stable intersection of $L$ and $L'$ is the vertex of $L'$, that is to
say the point $(-1,0)$. Note that this is independent of the family
$\L'_t$, as long as $\Log_t(\L'_t)$ converges to $L'$.
However depending on the family $\L'_t$, the point $\lim_{t\to +\infty}
\Log_t(p_t)$ 
 may be located anywhere on the half-line
$\{(x,0) , -\infty\le x\le-1\}$.
  Indeed,
  given the family
$$\L'_t=\{(x, y) \in \RR^2
  \ | \ (t+1)x+y+(1-t^{b+1})=0\} \textrm{ with } b\le -1,$$ 
then according to Theorem \ref{thm:approx},
 $\lim_{t\to +\infty}\Log_t(\L'_t)=L'$. 
Yet 
we have
$p_t=(t^b,-1-t^b)$, 
and  so
$\lim_{t\to +\infty}\Log_t(p_t)=(b,0)$.
Similarly, we get 
 $\lim_{t\to +\infty}\Log_t(p_t)=(-\infty,0)$ by taking
$$\L'_t=\{(x, y) \in \RR^2
  \ | \ tx+y+1=0\}.$$ 
This shows that using the tropical model
  $L,L'\in\T^2$, it is impossible to extract information about the
  location of the intersection point of $\L$ and $\L'_t$.  
It turns out that changing the model $\T^2$ via a tropical
modification unveils the location of the limit of the intersection
point $p_t$. 
To obtain our new model let us consider again the plane $\P\subset\RR^3$ with
equation 
$x+y+z+1=0$, and the projection
$$
\begin{array}{cccc}
\pi : &\P&\longrightarrow &\RR^2
\\ & (x,y,z) &\longmapsto &(x,y)
\end{array}.
$$ 
The map $\pi$ is clearly  a bijection, and  $\pi^{-1}(x,y)=(x,y,-x-y-1)
$. 
The intersection of $\P$ with the plane $z = 0$ in $\R^3$
 is precisely the line $\L$.
This
implies that  $\Log_t(\pi^{-1}(\L))\subset \{z=-\infty\}$ and 
$$\lim_{t \to +\infty} \Log_t(\pi^{-1}(\L))=\Pi\cap \{z=-\infty\}\subset\T^3.$$

If $\L'_t$ is a family of lines  in $\RR^2$ as above, then  $\pi^{-1}(\L'_t)$
is a family of lines in $\P$, and moreover 
the
intersection point  $p_t$ must satisfy
$\pi^{-1}(p_t)\in \{z=0\}.$  In particular 
we
have $$\lim_{t\to +\infty} \Log_t(\pi^{-1}(p_t))\in \{z=-\infty\}.$$ 
Now it turns out that 
$L''=\lim_{t\to +\infty} \Log_t(\pi^{-1}(\L'_t))$ is a tropical line in $\Pi$, which
intersects the plane $\{z=-\infty\}$ in a single point! Therefore, 
upon projection back to $\R^2$
 this
point is nothing else but $\lim_{t\to +\infty} \Log_t(p_t)$, in other words the location of 
$\lim_{t\to +\infty} \Log_t(p_t)$ 
 is now revealed by considering the tropical picture in
$\T^3$.
In 
Figure \ref{fig:planelines}  we depict the case when $\L'_t$ is given by $(t+1)x+y+(1-t^{b+1})=0$.

\begin{figure}
\begin{center}
\includegraphics[width=8cm, angle=0]{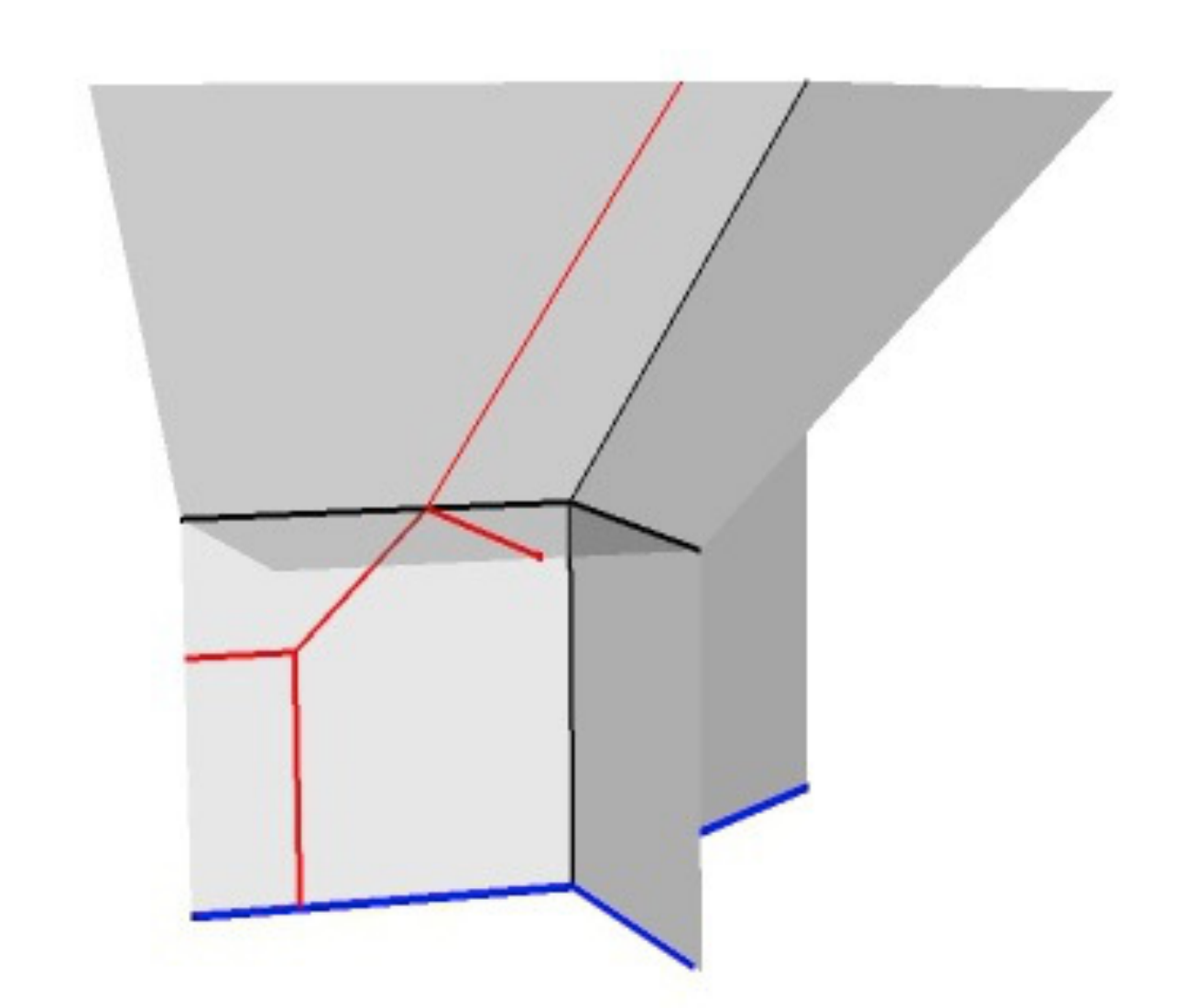}
\put(-190,5){$(b, 0, -\infty)$}
\put(-70, 30){$L$}
\put(-123, 160){$L^{\prime \prime}$}
\end{center}
\caption{The tropical lines $L$ and $L^{\prime \prime}$ in the modified plane $\Pi$}
\label{fig:planelines}
\end{figure}

\medskip
Hence there is no unique  or canonical choice of a tropical model of a
classical space, and tropical modification is the tool used to pass
between the different models. There is a single space which
captures in a sense all the possible tropical models,  it is known as
the Berkovich space of the classical object.  As one can 
imagine the
structure of a Berkovich 
space is extremely complicated due to the  infinitely many different tropical
models. For example, the Berkovich  line is an infinite tree. 
In a sense, a tropical model gives us just a snapshot of the complicated Berkovich  space and
modifications are what allow us to change the resolution.

\subsection{Higher (co)dimension}\label{sec:codim}

In the first part of this text our main object of study was tropical curves 
in the plane.
 Even in this limited case there are many 
applications to classical
  geometry.  In higher dimensions things become a bit trickier as
 the links between
the tropical and classical worlds become more intricate.

As we mentioned earlier, definitions of Sections \ref{sec:algebra}
    and
 \ref{sec:curves} as well as   Theorem \ref{thm:approx} admit natural
 generalisations to polynomials with any number of variables.
A tropical polynomial in $n$-variables
$P(x_1, \dots , x_n)$ defines a tropical hypersurface in
$\T^n$, which is
the corner locus of the graph of $P$ equipped with some weights.
A tropical hypersurface  
in $\T^n$ 
is glued from flat pieces,  has
dimension $n-1$,  
satisfies a balancing
condition similar to curves, and  
arises as the limit of
amoebas of a family of complex hypersurfaces. 
Because of this we say that all tropical hypersurfaces are
\textit{approximable}.

It is also natural to
consider tropical objects of 
dimension 
less than $n-1$
in
$\T^n$. A \textit{$k$-dimensional tropical variety} is a
weighted polyhedral complex of dimension
$k$  satisfying a generalisation of the balancing condition which
we saw for curves.  It would be too technical and probably useless 
to give here the
balancing condition in full generality. Nevertheless in the case of
curves, i.e.~when $k=1$, the definition is the same
as  
in
Section \ref{sec:balancing}: a tropical curve in $\R^n$ is a graph whose edges
have a rational direction, 
are equipped with positive integer
weights. In addition, the graph must satisfy the balancing condition given in 
Section \ref{sec:balancing} at each vertex.
Unfortunately, Theorem \ref{thm:approx} does not generalise 
to the case of higher codimensions:
there exist tropical varieties, said  to be \textit{not approximable}, 
which do not arise as limits of 
amoebas of  \textit{complex varieties} of the same degree. 
For our purposes, we may think of a complex variety in
$\CC^n$ as being the solution set of a system of polynomial equations
in $n$-variables.

Already in $\T^3$ there  are many examples of  tropical curves which are not
approximable. 
More intricately, there are examples of pairs of tropical varieties $Y
\subset X \subset \T^n$ for which both $Y$ and $X$ are approximable
independently, however  
there do not exist approximations
$\mathcal Y_t$ and $ \mathcal X_t $ of $Y$ and $X$ with
$\mathcal Y_t\subset \mathcal X_t $.
Let us 
give an explicit example of such a pair.

Consider the modified tropical plane 
$\Pi \subset \T^3$ we just encountered,
which is approximated by the amoebas of the complex plane $\P \subset \CC^3$ defined by the equation $x+y +z +1 = 0$.   
Now, the  union of three rays in the directions
$$(-2,-3,0), \qquad (0,1,1), \qquad (2,2,-1)$$
with each ray equipped with  weight $1$ is a tropical curve $C \subset
\T^3$. We may check that  the balancing condition is satisfied:
$$(-2, -3, 0)+ ( 0, 1,1 )+ (2, 2, -1)=0 .$$
Moreover $C$ is contained in the tropical
plane $\Pi$,
since  we may express the three rays as 
$$(-2, -3, 0) = 2u_1 + 3u_2, \qquad (0, 1, 1) = u_0 + u_1, \qquad (2,
2, -1) = 2u_0 + 3u_3,$$ where the $u_i$ are the directions of the rays
of $\Pi$ given in Section \ref{sec:modification}. 
The tropical curve $C$ is approximable\footnote{For example one can check that $C=\lim_{t\to+\infty} \Log_t(\C)$ where
$\C=\{(u^2(u-1), u^3, (u-1)), \ u\in\CC\}$.},
 however there are no
approximations $\C_t$ and $\P_t$ of $C$ and $\Pi$ with $\C_t\subset
\P_t$.
The full proof of this statement involves several steps, and we only
focus here on the essential one. 
Hence let us
assume that we may take both families $\P_t$ and
$\C_t$ to be constant,
with $\P_t$
equal to 
the above plane 
$\P$ for all $t$.
That is to say we are reduced
to showing that there does not exist a complex curve $\C\subset\P$ such
that $C=\lim_{t\to+\infty} \Log_t(\C)$.

\begin{figure}
\begin{center}
\includegraphics[width=2.5cm,  angle=0]{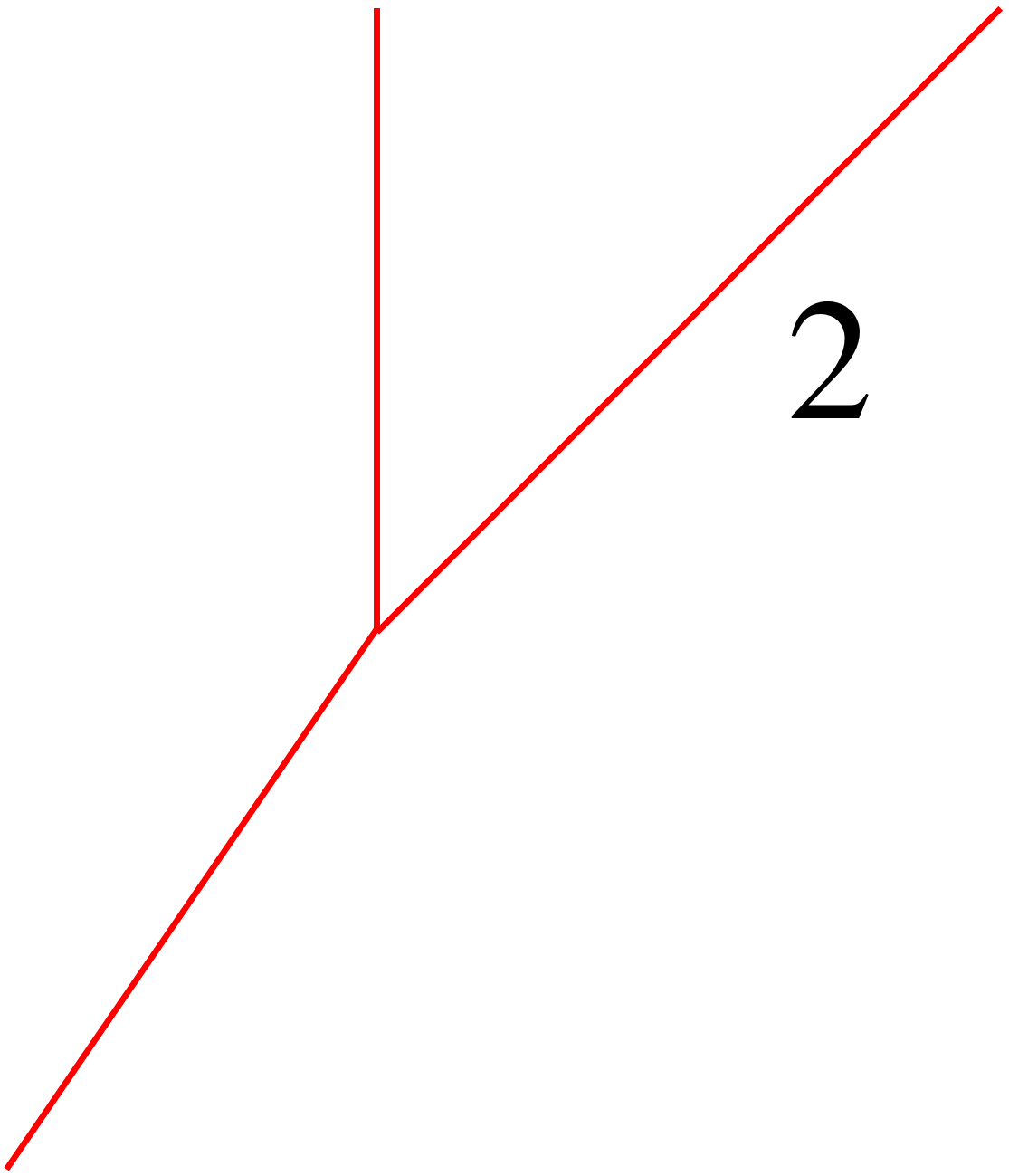} \hspace{3cm} 
\includegraphics[width=2.5cm,  angle=0]{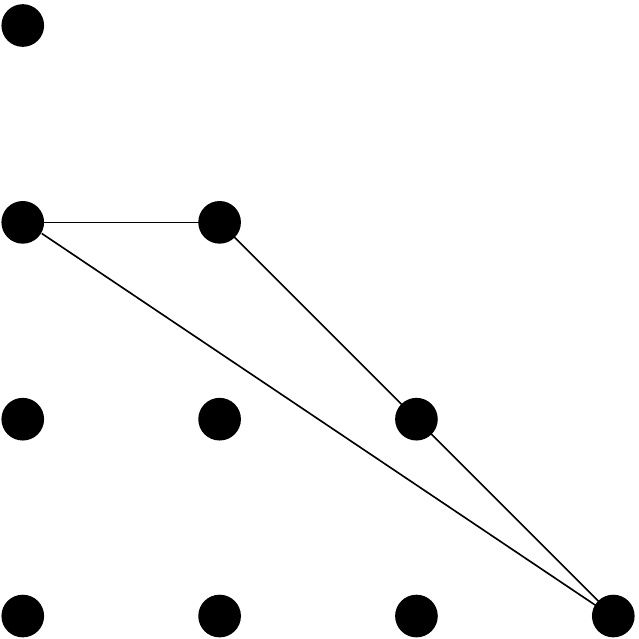}
\end{center}
\caption{The projection of the curve $C \subset P$ to $\T^2$ along with its dual polygon}
\label{fig:cusp}
\end{figure}

Suppose on the contrary that such a complex curve $\C$ exists.
The projection of $C$ to $\TT^2$ which 
 forgets the $z$-coordinate is a
tropical curve $C'$ as defined in
Section \ref{sec:curves}. 
The curve $C'$ and its 
Newton polygon
 are shown in Figure \ref{fig:cusp}. 
A polynomial
with such a  Newton polygon is  of degree $3$ and has the form $P(x, y) = ax^3 + by^2 + \dots$, where the monomials
 after the dots have higher degree. A complex curve in $\CC^2$ with
 such an equation has a type of singularity at $(0, 0)$ known as  a
 \textit{cusp}.  
Hence the ray of $C$ in  the  direction $(-2,
 -3, 0) =  2u_1 + 3u_2$ tells us that the projection of $\C$ has a
 cusp at $(0,0)$. Since $\C$ is contained in $\P$ and
 $\P$ projects bijectively
to $\CC^2$, we deduce that $\C$
 itself has a cusp at $(0,0,-1)$.
 In a similar way, the ray in direction  $2u_0
 + 3u_3$ implies that 
$\C$
must have a second  
cusp \textit{at infinity}\footnote{This statement makes sense in the framework
  of projective geometry.}.
However it is known that a plane curve of degree $3$ can have at most
one cusp,
which contradicts the existence of $\C$. 
This tells us that our  tropical curve is not approximable
by a complex algebraic curve of degree $3$ contained in  the plane $\P$.  

Many other examples of non-approximable tropical objects exist,
including lines in surfaces,  curves in space, and even linear
spaces. What must be stressed is that from a purely combinatorial view
point it is
very difficult to distinguish these pathological
tropical objects from approximable ones. This is just one of the
problems that makes tropical geometry continue to be a challenging and
exciting field of research especially in terms of its relations to classical 
geometry.

\section{References}\label{ref}
 The references given below contain the proofs of statements
  contained throughout the text. They also present the many directions in which tropical geometry is
 developing. The references to the  literature go far beyond what we mention here; it would be impossible to provide a complete list.  However, we hope to give an account of the multitude of perspectives on tropical geometry. 

Firstly, we should note that 
tropical curves first appeared under the
name of \emph{$(p, q)$-webs}, in the work of the physicists
O. Aharony, A. Hanany, and B.  Kol \cite{Aharony2}. 
In mathematics, roots of tropical geometry can be traced back, at least to the work of Bergman \cite{Berg71} on logarithmic limit sets of algebraic varieties, of Bieri and Groves \cite{BiGr} on non-archimedean valuations, of Viro on patchworking (see references below), as well to the study of toric varieties, which  provide a first relation between convex polytopes and algebraic geometry, see for example \cite{Ful}, \cite{Kho86}. 

\vspace{1ex}

There are other  general  introductions to tropical geometry.  The texts
\cite{BIT} and  \cite{St5} 
are aimed at the interested reader with a minimal background in
mathematics, similar to the one assumed in this text. There the
authors
emphasize
different applications of tropical geometry, enumerative geometry in
\cite{BIT}, and phylogenetics in    \cite{St5}.
More confident readers  can also read the works of 
\cite{St2}, \cite{Gath1}, \cite{Mik9}, \cite{Mik-whatis},  \cite{Mik-It-11}, \cite{MacL12}, \cite{St7}, and \cite{MikRau}.  
For established geometers, we recommend 
the state of the art \cite{Mik8} and \cite{Mik3}.

In 
  particular stable intersections of tropical 
curves in $\R^2$ are discussed in detail
  in \cite{St2}. The proof of Bézout Theorem we gave is contained in
  \cite{Gath1}. Foundations of a more sophisticated tropical
  intersection theory are exposed in \cite{Mik3}, and further developed in  \cite{AlRa1}, \cite{KatzTool}, \cite{Shaw1}.

Amoebas of algebraic varieties were introduced in \cite{GKZ}. The interested reader can 
also see the texts  \cite{Passare}, \cite{ViroWhat}, and the more sophisticated \cite{PassRull}. 
For a deeper look at patchworking, amoebas, and Maslov's dequantisation, as well as their applications to
Hilbert's 16th problem
we point you to the texts, 
 \cite{V9}, \cite{V11}, \cite{Mik8} and \cite{Mik11} as well as the website \cite{Voueb}. 
 We especially recommend the text \cite{IV2}, dedicated to a large audience, which explains how patchworking has been used to disprove the \emph{Ragsdale conjecture} which has been open for decades.

For more on  tropical hyperfields we point the reader towards  
\cite{ViroHyp}. As we mentioned in the text, there are other enrichments of the tropical semi-field, some examples can be found in 
\cite{Izha}, \cite{Fuzzy} and \cite{Conn}.  
Tropical modifications were introduced by Mikhalkin in the above mentioned text \cite{Mik3}, 
and their relations to Berkovich spaces  
can be found in \cite{Pay4} and 
\cite{Pay5}. 
This perspective from  Berkovich spaces allows one to
relate tropical geometry to algebraic geometry not only over $\CC$ but
over any  field equipped with a so-called \emph{non-archimedean
  valuation} (e.g. the $p$-adic numbers equipped with the $p$-adic
valuation). For this point of view, we refer for
example to  \cite{Rab1}, \cite{Pay2}, \cite{Pay6} and reference therein. 
The approximation problem  has
been studied mostly in the case of curves. 
For tropical curves 
see \cite{Mik1},
 \cite{Mik3}, \cite{Spe2}, \cite{NS}, \cite{Nishinou},
 \cite{Tyomkin},  \cite{Katz1}, \cite{ABBR13}. For a look at
the approximation problem  
 of curves in surfaces as in the example provided in the last
section, see \cite{BogKat},  \cite{Br17}, and  \cite{GathSchW}. 
Among all sources of motivation to study the latter problem, we refer to the very 
nice study of tropical lines in tropical surfaces in $\RR^3$ by Vigeland in \cite{Vig1} and \cite{Vig2}.

\medskip
To end this introduction to tropical geometry, we want to point out that this subject
has very successful applications to many fields other than Hilbert's 16th problem, such 
as enumerative geometry \cite{Mik1},  algebraic geometry \cite{Tev1}, mirror symmetry \cite{Gross}, mathematical biology \cite{Ard}, \cite{SturmBio}, computational complexity \cite{Gaub} 
computational geometry \cite{Yu}, \cite{Cueto}, and algebraic statistics \cite{CuetoBoltz}, \cite{SturmBio} just
to name a few...
\\

\bibliographystyle {alpha}
\bibliography {Biblio.bib}

\end{document}